\newcommand{\RN}[1]{%
  \textup{\uppercase\expandafter{\romannumeral#1}}%
}
\newcommand{\rn}[1]{%
  \textup{\lowercase\expandafter{\romannumeral#1}}%
}
\newtheorem{theorem}{Theorem}
\newtheorem{lemma}[theorem]{Lemma}
\newtheorem{corollary}[theorem]{Corollary}
\newtheorem{question}[theorem]{Question}
\newtheorem{definition}[theorem]{Definition}
\newtheorem{rmk}[theorem]{Remark}
\newtheorem{example}[theorem]{Example}
\newtheorem{prop}[theorem]{Proposition}
\newcommand*{\rom}[1]{\expandafter\@slowromancap\romannumeral #1@}
\newcommand{\upperRomannumeral}[1]{\uppercase\expandafter{\romannumeral#1}}
\newcommand{\lowerromannumeral}[1]{\romannumeral#1\relax}
\newcommand{\ZZ}{\mathbb{Z}}
\newcommand{\QQ}{\mathbb{Q}}
\newcommand{\CC}{\mathbb{C}}
\newcommand{\RR}{\mathbb{R}}
\newcommand{\PP}{\mathbb{P}}
\newcommand{\XX}{\mathbb{X}}
\newcommand{\dd}{\delta}
\title{Almost toric presentations of symplectic log Calabi-Yau pairs }
\author{Tian-Jun Li, Jie Min, Shengzhen Ning}
\begin{document}
\maketitle
\begin{abstract}
    It is known (\cite{symington}) that the union of fibers over elliptic singularities of an almost toric fibered (ATF) closed symplectic four-manifold forms a symplectic log Calabi-Yau (LCY) divisor. In this paper, we show the converse: any symplectic LCY divisor can be realized as the boundary divisor of an ATF. For divisors in elliptic ruled surfaces, this realization occurs over the Möbius strip; for divisors in rational surfaces, the realization occurs over the disk and becomes canonical once we choose an additional datum, called the framing, on the space of LCYs in rational surfaces. The construction for rational surfaces is achieved by considering the symplectic analogue of the toric model used in algebraic geometry (\cite{GHK}), which motivates the introduction of a new combinatorial object that we call the bitten Delzant polygon.
\end{abstract}

\tableofcontents

\section{Introduction}
The purpose of this paper is to establish a precise correspondence between two important geometric structures on closed symplectic $4$-manifolds: almost toric fibrations and symplectic log Calabi-Yau divisors. 

\subsection{From ATF to LCY}
An {\bf almost toric fibration (ATF)} on a symplectic $4$-manifold is a Lagrangian torus fibration that allows only focus-focus and elliptic singularities; it was introduced by Symington in \cite{symington} as the generalization of symplectic toric manifolds in dimension $4$. In \cite{LS}, Leung and Symington provided a classification of the diffeomorphism types of closed symplectic $4$-manifolds that admit almost toric structures. The possible underlying smooth $4$-manifolds and the topological types of their two-dimensional ATF bases are the following:
\begin{enumerate}[label=\Roman*]
    \item rational surfaces $\CC\PP^2\#n\overline{\CC\PP}^2$, $S^2\times S^2$, bases are closed disks;
    \item elliptic ruled surfaces $S^2\times T^2$, $S^2\tilde{\times}T^2$ and their blowups $(S^2\times T^2)\#n\overline{\CC\PP}^2$, bases are either closed cylinders or M\"obius strips; 
    \item  $K3$ surface, base is a sphere;
    \item Enriques surface, base is a real projective plane; 
    \item some torus bundles over torus with prescribed monodromies, bases are either torus or Klein bottle\footnote{See Remark \ref{rmk:torusbundle}.}.
\end{enumerate} 
Observe that symplectic manifolds of type \RN{3}, \RN{4}, \RN{5} in the above list are all minimal {\bf symplectic Calabi-Yau (SCY)} surfaces. In other words, their first Chern classes are torsion classes. Moreover, the ATF bases for type \RN{3}, \RN{4}, \RN{5} do not have boundaries. In fact, this phenomenon follows from \cite[Proposition 8.2]{symington}:

\begin{prop}[Symington]\label{prop:symington}
    Let $(X,\omega)$ be a closed symplectic $4$-manifold and $\pi:(X,\omega)\rightarrow B$ be an almost toric fibration over the nodal integral affine surface $B$. Then the homology class of the full boundary preimage $\pi^{-1}(\partial B)$ is Poincar\'e dual to $c_1(X,\omega)$\footnote{The argument in \cite[Proposition 8.2]{symington} involves the choice of a Lagrangian $2$-frame and viewing $c_1(X,\omega)$ as the Poincar\'{e} dual to the degeneracy locus of this frame. So, $c_1(X,\omega)$ here really means the real class rather than the integral class.}.
\end{prop}

 On the other hand, the ATF bases of type \RN{1}, \RN{2} have non-empty boundaries. In the remainder of this paper, we will concentrate on these two types of ATFs, with the aim of investigating the other three types in future work. Let us also remark that, within the classification in \cite{LS}, rational surfaces stand out as the most accessible examples, since their almost toric base diagrams are topological disks—possibly equipped with some special points representing nodal singularities—that can often be drawn in the plane after choosing branch cuts. There has been extensive use of almost toric fibrations on rational surfaces studying symplectic embeddings (\cite{brendlschlenk,CasalVianna,CGHMP-toric-staircase,Magill}), Lagrangian tori (\cite{Vianna1,viannadelpezzo,LeeOhVianna,flux}) as well as Hamiltonian dynamics (\cite{LPR1,LPR2}). For a comprehensive and accessible exposition of the subject, see the invaluable monograph \cite{evans} by Evans.

Several related notions have also been extensively studied, including symplectic toric manifolds (\cite{Delzant}), Hamiltonian $S^1$-spaces (\cite{KarshonS1}), and semitoric integrable systems (\cite{PV2,PV1}). A very nice feature of these structures is that they are classified up to isomorphism by distinct combinatorial or analytic data: Delzant polygons in the toric case, Karshon's decorated graphs for Hamiltonian $S^1$-spaces and Pelayo-Ng\d oc's five invariants for semitoric systems. These classification results reveal more than just the diffeomorphism types of the underlying manifolds. For example, it is well known (\cite{KK07}) that not every symplectic form on a rational surface supports a Hamiltonian $S^1$-action, and in particular, many do not admit a toric or semitoric structure. In fact, the symplectic forms admitting toric structures can be described explicitly for rational surfaces of small Betti numbers, as shown in Section 3.4 of \cite{Enumerate}. Compared to these notions, almost toric fibrations exhibit significantly greater flexibility, due to the wide variety of possible base diagrams. Even for $\CC\PP^2$, one can obtain infinitely many distinct base diagrams through mutations, each associated with a Markov triple. Consequently, a complete combinatorial classification of ATFs seems to be out of reach.

Based on the above discussion, the first natural question that arises is the following:

\begin{question}\label{question:1}
    Let $X$ be a closed smooth $4$-manifold in the classification list of \cite{LS}. Given a symplectic form $\omega$ on $X$, how can we determine whether $\omega$ admits an ATF structure or not?
\end{question}

   There is also a simpler variant of the above question by replacing the symplectic form with a cohomology class in $H^2(X;\RR)$ (see also Question \ref{question:Enriques}). \cite[Section 8.6]{evans} provides a prescription to find the symplectic class for a restricted class of base diagrams where the almost toric bases are $\RR^2$ with base-nodes. We will focus on closed manifolds and our first main result answers Question \ref{question:1} in the case where $X$ is a rational surface or a blowup of elliptic ruled surface. Remarkably, the condition for a symplectic form $\omega$ on $X$ to admit an ATF is surprisingly simple.
 
 \begin{theorem}[Corollary \ref{cor:ruledmain}+Corollary \ref{maincor}]\label{thm1}
   Let $(X,\omega)$ be a symplectic manifold where $X$ is diffeomorphic to either a rational surface or a blowup of elliptic ruled surface. Then $(X,\omega)$ has an ATF structure if and only if $[\omega]\cdot c_1(X,\omega)>0$.
 \end{theorem}

\begin{rmk}
    The theorem of Liu-Ohta-Ono (\cite{Liu,OhtaOno}) asserts that any closed symplectic $4$-manifold $(X,\omega)$ with $[\omega]\cdot c_1(X,\omega)>0$ must be diffeomorphic to a rational surface or blowup of ruled surface. However, since ruled surfaces of genus greater than one do not admit ATF structures, we impose the restriction on the diffeomorphism type of $X$ in Theorem \ref{thm1}.
\end{rmk} 
 
 The `only if' part in Theorem \ref{thm1} is an immediate corollary of Proposition \ref{prop:symington}. When focusing on the cases where $X$ is a rational surface or blowup of elliptic ruled surface, the base $B$ has non-empty boundary and $\pi^{-1}(\partial B)$ will be a union of symplectic submanifolds in $(X,\omega)$ so that the condition $[\omega]\cdot c_1(X,\omega)>0$ must hold. Consequently, the difficult part in Theorem \ref{thm1} is to construct an ATF whenever $[\omega]\cdot c_1(X,\omega)>0$. This construction is guided by considering another geometric object—symplectic log Calabi-Yau divisors—which we now introduce.

 A {\bf symplectic log Calabi-Yau (LCY) divisor} is a connected configuration of embedded symplectic surfaces $D=\cup D_i$ in a closed symplectic $4$-manifold $(X,\omega)$ satisfying
 \begin{itemize}
     \item the intersections between any two components are positively transverse;
     \item there is no triple intersection;
     \item $\sum[D_i]=\text{PD}(c_1(X,\omega))$.
 \end{itemize}  
 We will call $(X,\omega,D)$ a symplectic log Calabi-Yau pair, or simply a pair. It is called orthogonal if all the intersections are $\omega$-orthogonal. This notion was introduced in \cite{limak} as the symplectic counterpart of the anticanonical pair in algebraic context first systematically investigated by Looijenga in \cite{Looijenga}. We also refer to Friedman's excellent survey \cite{Friedman} as well as the work of Gross-Hacking-Keel \cite{GHK} for the mirror symmetry aspect.
 
In view of Proposition \ref{prop:symington}, ATF and LCY, as two purely symplectic notions, are related by taking the boundary divisor $\pi^{-1}(\partial B)$. This observation makes it straightforward to obtain LCY from a given ATF.

 \subsection{From LCY to ATF}
 In this paper, we will explore the opposite direction by showing that one can also construct ATF from a given LCY.

  \begin{theorem}[Theorem \ref{thm:atfruled}+Theorem \ref{thm:main}]\label{thm:intromain}
     For any orthogonal symplectic log Calabi-Yau pair $(X,\omega,D)$, there exists an almost toric fibration $\pi:(X,\omega)\rightarrow B$ such that $D=\pi^{-1}(\partial B)$.
 \end{theorem}

For rational surfaces, the statement in Theorem \ref{thm:intromain} is actually a simplified version of Theorem \ref{thm:main}. Indeed, the ATF we construct is algorithmic in nature and canonical up to a choice of framing $\mathfrak{f}$, defined in Section \ref{section:lcy}, on the set $\mathbb{LCY}$ (resp. $\mathbb{LCY}_{\geq 2}$) of the isomorphism classes of all LCY divisors in rational surfaces (resp. with $b_2^-\geq 2$). Motivated by the almost toric blowup operation, we introduce the combinatorial object called the {\bf bitten Delzant polygon} and denote the isomorphism class of them by $\mathbb{BD}$. Each bitten Delzant polygon, which naturally gives rise to a nodal integral affine disk, determines an almost toric fibred rational surface which is unique up to a fibred symplectomorphism away from the neighborhood of the nodal singular fibers (\cite[Theorem 5.2]{symington}, \cite[Theorem 8.5]{evans}). By taking the boundary divisor, there will be a well-defined map $\mathbf{B}:\mathbb{BD}\rightarrow\mathbb{LCY}$. The canonicity of our construction is then encapsulated in a section $\mathbf{R}_{\mathfrak{f}}:\mathbb{LCY}_{\geq 2}\rightarrow\mathbb{BD}$ of the surjective map $\mathbf{B}$, which depends only on the choice of framing $\mathfrak{f}$. 

Now, we explain the idea of the algorithmic construction for $\mathbf{R}_{\mathfrak{f}}$. By taking the symplectic class, $\mathbb{LCY}$ admits a period map onto the union of symplectic cones of all rational surfaces on which the diffeomorphism groups act. Choosing a fundamental domain of symplectic cones, the fibers of the period map are all finite and can even be enumerated. The method how we count the fibers in \cite{Enumerate} is by considering the {\bf reduced models} (Section \ref{section:reducedmodel}) and all the possible blowup patterns initiating from them modulo the symmetry on rational surfaces. The ambiguity of such symmetry is eliminated by the choice of the framing $\mathfrak{f}$ which enables us to obtain a unique path of blowups. In very ideal cases, a separation of the toric and non-toric blowups in this path can already tell us how to choose a Delzant polygon with triangles lying on its edges for almost toric blowups, which gives an element in $\mathbb{BD}$. In more general cases, we borrow the idea from the toric models used for holomorphic anticanonical pairs in \cite{GHK} to construct the {\bf symplectic toric model} (Section \ref{section:toricmodel}). This will be achieved by firstly performing certain birational modifications called the {\bf $\varepsilon$-replacement} (Section \ref{subsection:replace}) determined by the blowup patterns from the reduced model. For any holomorphic anticaonical pair $(Y,D)$, \cite[Proposition 1.3]{GHK} shows that one can always find some toric blowup $(\widetilde{Y},\widetilde{D})$ of $(Y,D)$ and a toric pair $(\overline{Y},\overline{D})$ as the non-toric blowdown of $(\widetilde{Y},\widetilde{D})$. We will completely forget about the complex structures and assign each topological pair a symplectic structure. The $\varepsilon$-replacement and symplectic toric model are explicit symplectic substitutions for $(\widetilde{Y},\widetilde{D})$ and $(\overline{Y},\overline{D})$ respectively. In Section \ref{section:EF21}, we will have more discussions and examples about the symplectic toric models and the toric models in the sense of \cite{GHK} and the connection to the work \cite{MN24}. 

To `mirror' the $\varepsilon$-replacement on the LCY side, we introduce a new type of surgery for nodal integral affine disks on the ATF side, which we call the {\bf full bite}. Recall that if there is an integral affine embedding from a triangle into a nodal integral affine disk such that: 
\begin{enumerate}
   \item the triangle is disjoint from all the nodes;
    \item one edge of the triangle, called the bottom triangle, lies in the $1$-stratum of the disk;
    \item the vertex not on the bottom edge, called the top vertex, lies in the interior of the disk and has integral affine distance to the bottom edge equal to the affine length of the bottom edge, 
\end{enumerate}
then one can remove the triangle, add a node at the top vertex and glue two edges of the triangle meeting at the top vertex to obtain a new nodal integral affine disk. This procedure is well-known as the ATF blowup (\cite[Section 5.4]{symington}, \cite[Section 9.1]{evans}) and corresponds to the non-toric blowup on the LCY side (see the discussion in Section \ref{section:EF21}). Now, if we modify condition (2) by letting the bottom edge have the same affine length as the disk's edge it lies on as well as requiring the preimage of the edge under ATF has self-intersection $0$, and executes the same surgery as above, then such a procedure is called a full bite. See Figure \ref{fig:fullbite}. 

 \begin{figure*}[h]
		\includegraphics*[width=\linewidth]{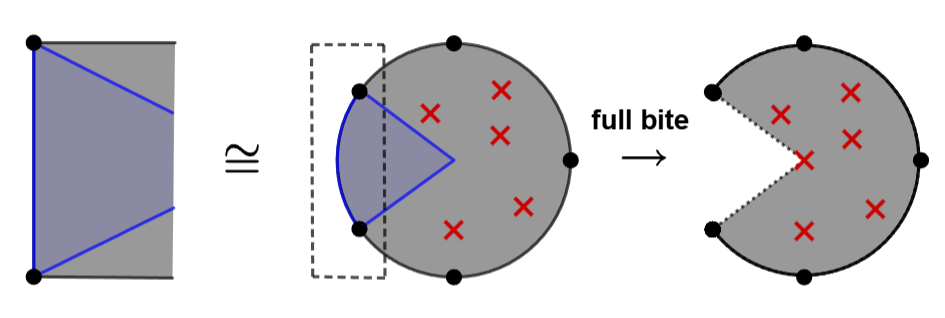}
  \caption{The middle and right diagrams represent two nodal integral affine (though not indicated in the pictures) disks related by a full bite. The full bite introduces a new node (in red) and glues the two dashed segments. In particular, two vertices of the triangle will be identified. The left diagram is an integral affine presentation of the local rectangle region in the middle diagram, where we require the edge to have self-intersection $0$. }\label{fig:fullbite}
	\end{figure*}

 The triangle used in ATF blowup or full bite will be referred to as a {\bf Symington triangle} and the integral affine length of the bottom edge will be called its {\bf size}. A full bite should be thought as the elementary transformation for algebraic surfaces \cite[Exercise \RN{3}.24]{complexagsurface}. It adds one new nodal point but also identifies two vertices of the disk so that the Euler number is not changed. A basic example is the full bite of a rectangle, which is the Delzant polygon for $S^2\times S^2$. After choosing a different cut, the diagram becomes a trapezoid with a nodal trade performed at one vertex, representing $\CC\PP^2\#\overline{\CC\PP}^2$. See Figure \ref{fig:examplefullbite} and also Section \ref{sec:bitten} for more details.  

\begin{figure*}[h]
		\includegraphics*[width=\linewidth]{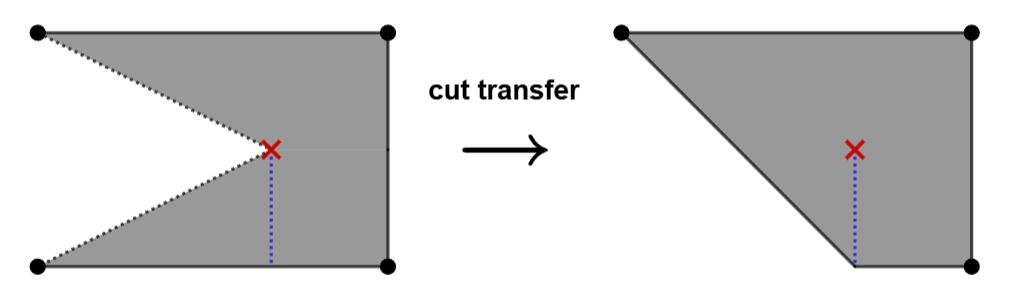}
  \caption{A full bite transforms $S^2\times S^2$ into $\CC\PP^2\#\overline{\CC\PP}^2$. }\label{fig:examplefullbite}
	\end{figure*}

\subsection{From LCY to SCY via ATF-visible symplectic sums?}
 Usher \cite{Ushersum} studied all the possible ways to obtain a symplectic Calabi–Yau surface $(X,\omega)$ through symplectic sums $(X_1,\omega_1)\#_{D_1=D_2}(X_2,\omega_2)$ along positive-genus embedded symplectic surfaces $D_1\subseteq(X_1,\omega_1),D_2\subseteq(X_2,\omega_2)$. Modulo the trivial and blowup-type sums, the two pieces $(X_i,\omega_i,D_i)$ for $i=1,2$ must be symplectic LCY pairs with exactly one divisor component. It is illuminating to compare the table in \cite{Ushersum} with the one in \cite{LS} to obtain the following table, where we use `+' to denote gluing the boundaries of two nodal integral affine surfaces in dimension $2$, and performing symplectic sums along LCY divisors in dimension $4$: 
     \begin{center}
\begin{tabular}{ |c|c| } 
 \hline
 \text{dimension }2 & \text{dimension }4 \\ 
 \hline
\text{disk+disk=sphere } & \text{rational surface+rational surface=K3 surface } \\ 
 \hline
 \text{disk+M\"obius strip=real projective plane } & \text{rational surface+ruled surface=Enriques surface} \\ 
 \hline
 \text{M\"obius strip+M\"obius strip=Klein bottle } & \text{ruled surface+ruled surface=torus bundle } \\ 
 \hline
\end{tabular}
\end{center}

\begin{rmk}\label{rmk:torusbundle}
    In the above list, the torus bundles are classified in \cite{LS} as torus bundles over the Klein bottle with Lagrangian fibers. From the symplectic sum perspective of \cite{Ushersum}, however, they can also be regarded as torus bundles over the torus, though in this setting the fibers are not Lagrangian. More precisely, these torus bundles correspond to types (e) and (f) in the table of \cite{Geiges}. Consequently, the torus bundles over the torus appearing in \cite{LS} with torus bases for their ATFs—corresponding to types (a), (b), and (d) in \cite{Geiges}—do not arise in the symplectic sum picture above.
\end{rmk}

While the ATF presentations in Theorem \ref{thm:main} are formulated for Looijenga pairs, one may perform nodal trades to smooth all nodal points and obtain embedded surfaces. This yields a rich source of ATF presentations for the summands appearing in the above table. Motivated by the results of this paper, we pose the following question as a potential application.
\begin{question}[{\bf From LCY to SCY via ATF}]\label{question:Enriques}
   Which symplectic classes on the K3 surface, the Enriques surface, and on torus bundles over the torus admit ATF structures whose base diagrams can be described as gluings of the base diagrams presenting LCYs?
\end{question}

Note that there are natural constraints on symplectic classes on a K3 surface that admit Lagrangian torus fibrations, since such a class must pair trivially with the fiber class, which is always a non-zero class. From a different perspective, via hyperK\"ahler rotation, \cite{Linyushen} establishes a density result for symplectic classes admitting ATFs. For Enriques surface and torus bundles with $b_2^+=1$, it seems reasonable to speculate that all symplectic classes should admit ATFs since their Lagrangian torus fiber classes are zero.

\hfill \break
{\bf Structure of the paper:}  We first discuss the ATF presentation for LCY in elliptic ruled surfaces in Section \ref{section:ruled}, which involves packing triangles into M\"obius strips. We then turn to the more sophisticated case of rational surfaces. Section \ref{section:lcy} provides a concise review of symplectic log Calabi-Yau divisors in rational surfaces, as studied in \cite{limak} and \cite{Enumerate}. The new ingredients are the notions of $\varepsilon$-replacement and symplectic toric model, which will play a key role later in the paper. In Section \ref{section:bd}, we introduce a class of ATF base diagrams, called the bitten Delzant polygons, realizing all LCYs, and give a careful argument showing the correspondence between these base diagrams and divisors. After addressing several technical preliminaries, we conclude in Section \ref{section:proof} with the proof of our main theorem for rational surfaces.
\hfill\break
{\bf Convention:} Throughout this paper, we use the notation $H_{i_1i_2\cdots}$ to denote the homology class $H-E_{i_1}-E_{i_2}-\cdots$.

\hfill \break
{\bf Acknowledgment:} The authors would like to thank Philip Engel, Jonny Evans, Margaret Symington and Weiwei Wu for helpful communications.

\section{Bitten M\"obius strips and ATF on elliptic ruled surfaces}\label{section:ruled}
In this section, we first construct the almost toric realization for log Calabi-Yau pair $(X,\omega,D)$ where $(X,\omega)$ is the (blowup of) elliptic ruled surface. A basic fact, which can be easily deduced from the adjunction formula (\cite[Lemma 3.1]{limak}), is that $D$ consists of a single embedded symplectic torus representing $\text{PD}(c_1(X,\omega))$. As explained in \cite{LS}, the topological types of almost toric bases for minimal elliptic ruled surfaces $S^2\times T^2$ or $S^2\tilde{\times}T^2$ could be either the cylinder $S^1\times I$ or the M\"obius strip $S^1\tilde{\times}I$\footnote{For convenience, we will always assume $I=[0,1]$ by rescaling the symplectic form.}. Since $D$ is connected, the only candidate for realizing $D$ as the ATF boundary divisor must be the M\"obius strip. In \cite[Example 5.10]{LS}, the M\"obius strip is viewed as the $\ZZ_2$-quotient of the cylinder. Here, we will adopt an alternative viewpoint which is more suitable for packing Symington triangles to realize almost toric blowups. To describe an integral affine structure on the M\"obius strip $S^1\tilde{\times}I$, we can take its universal cover $\RR\times I\subseteq \RR^2$ where the inclusion into $\RR^2$ serves as the developing map. Choose two parameters $k\in \ZZ_{\geq 0}$ and $a\in \RR_{+}$ and consider the following equivalence relation defined on $\RR\times I$ given by
\begin{align}\label{action}
    (x,y)\sim_{k,a} (x+a+ky,1-y)
    \end{align}
whose corresponding deck transformations preserve the integral affine structure on $\RR\times I$. Therefore, the quotient $\RR\times I/\sim_{k,a}$ naturally becomes an integral affine M\"obius strip. See Figure \ref{fig:strip}.

  \begin{figure}[h]
		\includegraphics*[width=\linewidth]{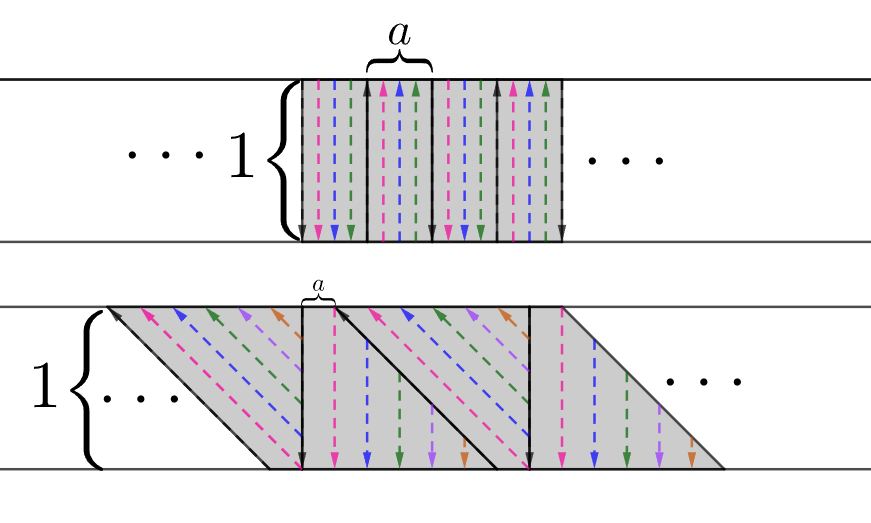}
 
		\caption{Equivalence relation (\ref{action}) on the strip $\RR\times[0,1]$ for $k=0$ and $k=1$.\label{fig:strip}}
	\end{figure}

Now, let us fix some notations. For the trivial $S^2$-bundle over $T^2$, we use $B,F\in H_2(S^2\times T^2;\ZZ)$ to denote $[\{pt\}\times T^2]$, the class of section with self-intersection $0$, and the fiber class $[S^2\times \{pt\}]$ respectively; for the non-trivial $S^2$-bundle over $T^2$, we still use $F$ to denote the fiber class but use $B_1$ for the class of section with self-intersection $1$. The following lemma gives the prescription to read off the symplectic class on minimal elliptic ruled surface from (\ref{action}). The proof relies on the ATF visible surface technique of \cite[Section 7]{symington} to find the symplectic areas of symplectic surfaces representing the fiber class and the section class.

\begin{lemma}\label{lem:minimalruled}
    The integral affine structure on the M\"obius strip induced by the quotient $\RR\times I/\sim_{k,a}$ is the almost toric base of $(S^2\times T^2,\omega)$ if $k$ is even and $(S^2\tilde{\times}T^2,\tilde{\omega})$ if $k$ is odd. Moreover, up to a rescaling, the symplectic form $\omega$ has period $\omega(F)=1$, $\omega(B)=a+\frac{k}{2}$; $\tilde{\omega}$ has period $\tilde{\omega}(F)=1$, $\tilde{\omega}(B_1)=a+\frac{k+1}{2}$. 
\end{lemma}

\begin{proof}
 Let $(p_1,p_2)\in \RR\times I$ be the action coordinates and $(q_1,q_2)$ be the angle coordinates for the Lagrangian torus fibers such that the symplectic form is locally given by $dp_1\wedge dq_1+dp_2\wedge dq_2$ around the regular fibers. For both odd and even $k$, there is a symplectic sphere lying over the blue vertical segment in Figure \ref{fig:minimalruled} by choosing $\partial_{q_2}$ as the tangent directions of the circles in the Lagrangian torus fibers collapsing to points at the boundary of the strip. Note that the class of any symplectic sphere in minimal elliptic ruled surfaces must be the fiber class $F$ by considering the projection to the $T^2$-factor. As a result, we see that $\omega(F)$ and $\tilde{\omega}(F)$ correspond to the width of our strip, which may be assumed to be $1$ up to a rescaling. On the other hand, we can take the green and red smooth curves shown in Figure \ref{fig:minimalruled} which are $C^0$-close to the diagonals in the fundamental trapezoids but have slope $0$ when approaching the boundary. By making such a choice, we will obtain two smooth curves in the M\"obius strip. They both intersect the boundary of the M\"obius strip once with the same tangent direction. Let us further choose $\partial_{q_1}$ as the tangent directions in the Lagrangian torus fibers. Note that this is not a collapsing class with respect to the boundary stratum. Indeed, it satisfies the requirement $(3)$ in \cite[Definition 7.3]{symington} so that we will get embedded symplectic tori intersecting the boundary divisor at a circle. These two tori intersect the fiber spheres positively at one point and have symplectic area $a$ and $a+k$ respectively. Therefore, the classes for red and green tori must be either $B+xF,B+(x+k)F$ or $B_1+xF,B_1+(x+k)F$ for some $x\in \ZZ$. By further observing that these two visible tori are disjoint and computing the intersection numbers:
 \[(B+xF)\cdot(B+(x+k)F)=2x+k,\]
 \[(B_1+xF)\cdot (B_1+(x+k)F)=2x+k+1,\]
 one can conclude that when $k$ is even, $x=-\frac{k}{2}$ and the classes are $B-\frac{k}{2}F,B+\frac{k}{2}F$; when $k$ is odd, $x=\frac{k+1}{2}$ and the classes are $B_1-\frac{k+1}{2}F,B_1+\frac{k-1}{2}F$. The symplectic area for $B$ or $B_1$ then immediately follows from this observation.
\end{proof}

 \begin{figure}[h]
		\includegraphics*[width=\linewidth,height=4cm]{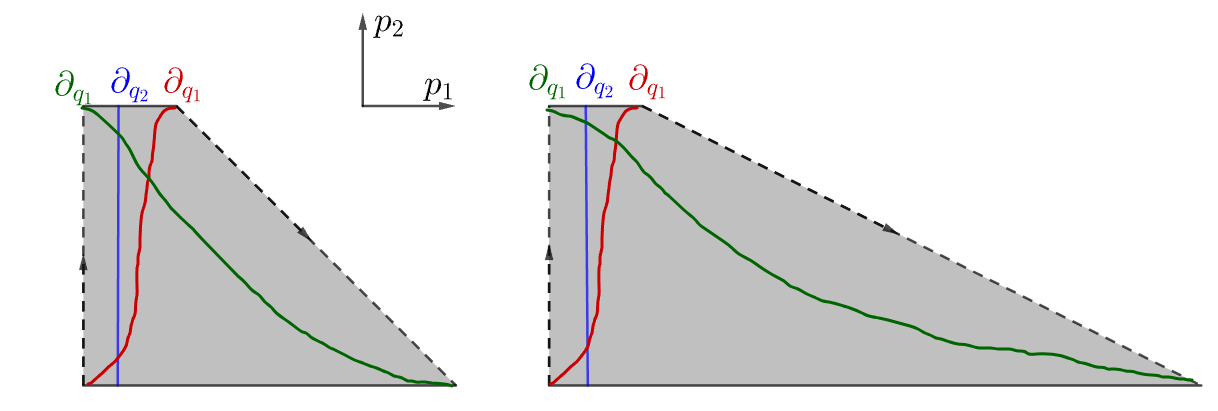}
		\caption{Visible symplectic surfaces representing the fiber class and section class, when $k=1$ and $k=2$.\label{fig:minimalruled}}
	\end{figure}
    
\begin{rmk}[Visible symplectic circle sum]
    The circle sum construction was originally introduced in \cite{LiLicirclesum} for two disjoint embedded surfaces $\Sigma_1,\Sigma_2$ in the smooth category to study the minimal genus problem. It produces a new embedded surface of genus $g(\Sigma_1)+g(\Sigma_2)-1$ by removing a neighborhood of non-separating circles on $\Sigma_1,\Sigma_2$ and concatenating them by two cylinders. This can also be formulated in the symplectic category as a special case of the tilted transport construction (\cite[Section 4]{DorfLicirclesum}). For minimal elliptic ruled surfaces, one can visualize the circle sum of two section tori lying over the red and green curves in the ATF base diagram used in the proof of Lemma \ref{lem:minimalruled}. This operation yields another embedded symplectic torus representing the class $2B$ or $2B_1-F$, which is isotopic the boundary LCY divisor. See Figure \ref{fig:circlesum}.
\end{rmk}

\begin{figure}[h]
		\includegraphics*[width=\linewidth,height=5cm]{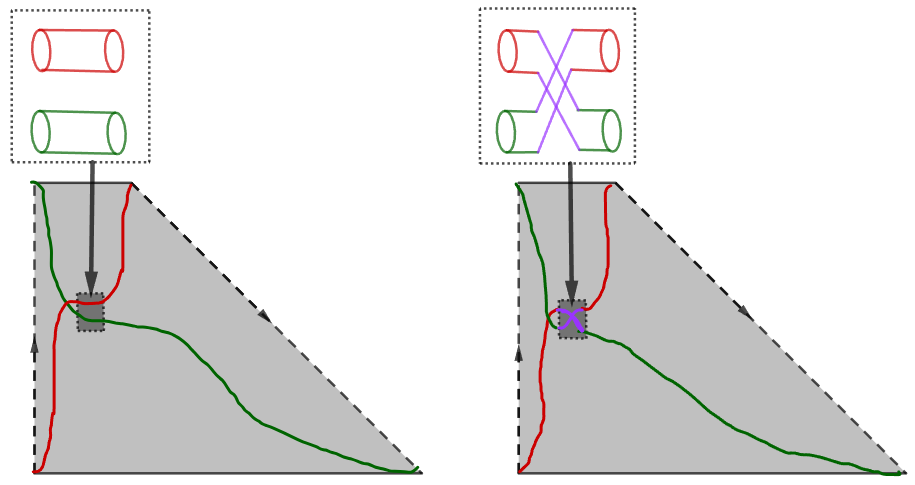}
 
		\caption{Visible symplectic circle sum.\label{fig:circlesum}}
	\end{figure}
    
Now, let us explain the version of Torelli theorem for (the blowup of) elliptic ruled surface $X$ that will be used. Suppose there are two log Calabi-Yau pairs $(X,\omega_0,D_0)$ and $(X,\omega_1,D_1)$ which satisfy $[\omega_0]=[\omega_1]$. Note that since the divisors $D_0$ and $D_1$ only contain one single component representing the Poincar\'e dual of the first Chern class. It then follows that $[D_0]=[D_1]$ by the fact that cohomologous symplectic forms on closed $4$-manifolds must have the same Chern class (\cite[Corollary A]{Salamonsurvey}). By the Torelli theorem stated in \cite[Theorem 1.4]{limak}, there is a strict symplectic deformation $(X,\omega_t,D_t)$ between them. Here, $\{\omega_t\}$ is a symplectic isotopy between $\omega$ and $\omega'$, $\{D_t\}$ is a smooth isotopy between $D_0$ and $D_1$ such that each $D_t$ is $\omega_t$-symplectic. Let $\{\phi_t\}$ be the Moser family of diffeomorphisms such that $\phi_t^*\omega_t=\omega_0$. Then we have a symplectic isotopy of $\omega_0$-embedded symplectic tori $\{\phi_t^{-1}(D_t)\}$ in $X$. By \cite[Proposition 0.2]{SiebertTian}, there is a family of Hamiltonian $\omega_0$-symplectomorphisms $\{\psi_t\}$ such that $\phi_t^{-1}(D_t)=\psi_t(D_0)$ for each $t$. Consequently, these two pairs are symplectomorphic in the sense that $(\phi_1\circ\psi_1)^*\omega_1=\omega_0,(\phi_1\circ\psi_1)(D_0)=D_1$. The above discussion can be summarized by the following version of Torelli theorem.

\begin{theorem}[{\bf Symplectic Torelli Theorem\textendash Elliptic Ruled Case}]\label{thm:torelliruled}
    Let $X$ be an elliptic ruled surface or its blowup. Then, any two log Calabi-Yau pairs $(X,\omega,D)$ and $(X,\omega',D')$ are symplectomorphic if $\omega$ and $\omega'$ are cohomologous.
\end{theorem}

Theorem \ref{thm:torelliruled} guarantees that, to realize a log Calabi-Yau divisor in (the blowup of) elliptic ruled surfaces as the boundary divisor, it suffices to construct an almost toric base diagram whose total space has the desired symplectic class. We will consider the triangle packing problem for M\"obius strip in order to keep track of the blowups, which turns out to be much simpler than that of rational surfaces since the boundary of M\"obius strip has exactly one edge and admits only non-toric (ATF) blowups.

\begin{theorem}\label{thm:atfruled}
   Let $X$ be an elliptic ruled surface or its blowup. Then, any symplectic log Calabi-Yau pair $(X,\omega,D)$ can be realized as the boundary divisor of an almost toric fibration on $(X,\omega)$. 
\end{theorem}

\begin{proof}
    By Lemma \ref{lem:minimalruled}, when $X$ is minimal, it is easy to see that we can realize all the symplectic classes by choosing suitable values of $a$ and $k$. Hence, we may assume $X$ is not minimal, in which case $X=(S^2\times T^2)\#l\overline{\CC\PP}^2$ for some $l>0$ and a standard basis of $H_2(X;\ZZ)$ can be chosen to be $\{B,F,E_1,\cdots,E_l\}$ where $E_i$ denotes the exceptional class. Note that the existence of the symplectic divisor $D$ representing the first Chern class $c_1(X,\omega)=PD(2B-\sum_{i=1}^l E_i)$ implies that 
    \begin{align}\label{equ:2}
    2\omega(B)>\sum_{i=1}^l\omega(E_i).    
    \end{align} Moreover, we also have 
    \begin{align}\label{equ:3}
    \omega(E_i)<\omega(F)     
    \end{align}for all $i$ since each class $F-E_i$ is an exceptional class and thus has non-trivial Gromov invariant.
    
    Now, let us pick a suitable fundamental domain for the quotient $\RR\times [0,1]/\sim_{0,\omega(B)}$ to create enough space for packing Symington triangles. As shown in Figure \ref{fig:ruledpacking}, we can take the isosceles triangle with height $\omega(F)$ and base length $2\omega(B)$. By (\ref{equ:2}) and (\ref{equ:3}), we need to consider $l$ Symington triangles whose sizes are all less than $\omega(F)$ and the sum of sizes are less than $2\omega(B)$. It is straightforward to see that these conditions guarantee that $l$ Symington triangles with sizes $\omega(E_1),\cdots, \omega(E_l)$ can be packed into a smaller isosceles triangle with height $\max\{\omega(E_1),\cdots,\omega(E_l)\}$ and base length $\sum_{i=1}^l\omega(E_i)$ (shown as the purple triangle in Figure \ref{fig:ruledpacking}). By choosing any disjoint embedding of these Symington triangles, one can remove their interiors from the M\"obius strip, add $l$ nodes at their top vertices and then glue together the edges that share the same top vertex. The outcome will be a new integral affine M\"obius strip with $l$ nodes, which serves as the ATF base of $(S^2\times T^2)\#l\overline{\CC\PP}^2$ equipped with some symplectic form $\omega'$ according to the diffeomorphism classification in \cite{LS}.
    
    By Torelli Theorem \ref{thm:torelliruled},  to conclude that this construction realizes the pair $(X,\omega,D)$, it remains to verify that $\omega'$ has the same period as $\omega$. To this end, we apply ATF visible surface technique again as in the proof of Lemma \ref{lem:minimalruled} for the minimal case. By perturbing the green, red and blue curves in Figure \ref{fig:minimalruled} to avoid intersecting any Symington triangle, one can still obtain two symplectic tori $T_1,T_2$ and a symplectic sphere $S$ of self-intersection zero. The same argument in the proof of Lemma \ref{lem:minimalruled} implies that $[T_1]=[T_2]=B$ and $[S]=F$. By \cite[Exercise 5.17]{symington} or \cite[Section 9.1]{evans}, there will be $l$ disjoint exceptional spheres $S_1,\cdots,S_l$ lying over the dashed segments connecting the nodes to the boundary with $\omega'$-symplectic areas $\omega(E_1),\cdots,\omega(E_l)$. Note that any exceptional class must be either $E_i$ or $F-E_i$ (\cite[Corollary 5.C]{biranpacking}). The fact that all $S_i$'s are also disjoint from $T_1,T_2$ implies that $[S_i]=E_i$ for all $i$ up to a diffeomorphism acting on $H_2(X;\ZZ)$ by permuting $E_i$'s. Therefore, we have seen that the $\omega'$-symplectic areas of $T_1,S,S_1,\cdots,S_l$ are given by $\omega(B)$, $\omega(F),\omega(E_1),\cdots,\omega(E_l)$. This implies $\omega'$ has the same period as $\omega$.
\end{proof}

 \begin{figure}[h]
		\includegraphics*[width=\linewidth,height=4cm]{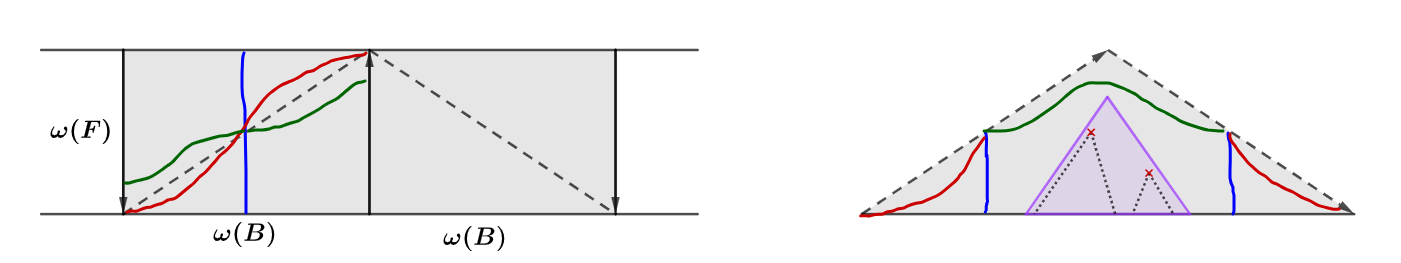}
		\caption{Choose the isosceles triangle from the left diagram as the fundamental region for the equivalence relation $\sim_{0,\omega(B)}$. In the right diagram, all Symington triangles are placed in the purple region. Over the green and red curves, there are two disjoint visible symplectic tori; over the blue curve, there is a visible symplectic sphere. \label{fig:ruledpacking}}
	\end{figure}

\begin{corollary}\label{cor:ruledmain}
    Let $X$ be an elliptic ruled surface or its blowup. If $\omega$ is a symplectic form on $X$ satisfying $[\omega]\cdot c_1(X,\omega)>0$, then $(X,\omega)$ admits an almost toric fibration.
\end{corollary}

\begin{proof}
    By Theorem \ref{thm:atfruled}, it suffices to explain the existence of an embedded symplectic tori Poincar\'e dual to $c_1(X,\omega)$. First, note that such a smooth representative $D\subseteq X$ always exists. Then, by \cite[Theorem 2.13]{relativecone}, the condition $[\omega]\cdot c_1(X,\omega)>0$ implies that the class $[\omega]$ is in the relative symplectic cone of the pair $(X,D)$. The conclusion then follows from the fact that cohomologous symplectic forms on (the blowup of) ruled surfaces are actually diffeomorphic (\cite[Theorem 3.5]{Li08}).
\end{proof}

\section{$\mathcal{LCY}$ in rational surfaces}\label{section:lcy}
\subsection{General facts and notations}
From now on, we will focus on rational surfaces, where the moduli of symplectic log Calabi–Yau divisors exhibits richer and more intricate combinatorial structures, arising from the birational relations among them. We begin by collecting some necessary background; further details can be found in \cite{limak} and \cite{Enumerate}. 

An LCY divisor $D$ in a symplectic rational surface $(X,\omega)$ is either an embedded symplectic torus as the case for elliptic ruled surfaces, which is called an {\bf elliptic pair}; or a cycle of embedded symplectic spheres whose intersections are positively transverse, which is called a {\bf Looijenga pair}. For the purpose of constructing almost toric realizations, we will restrict to pairs with $\omega$-orthogonal intersections, which can always be achieved by some Hamiltonian perturbation. A pair $(X,\omega,D)$ is called {\bf toric} if the number of components in $D$ attains the maximum $b_2(X)+2$, corresponding to the number of the edges in the Delzant polygon of the toric action on $(X,\omega)$. Two pairs $(X,\omega,D)$ and $(X',\omega',D')$ are said to be {\bf isomorphic} if there is a symplectomorphism between $(X,\omega)$ and $(X',\omega')$ which maps $D$ to $D'$. We will use the notation $[X,\omega,D]$ to denote the isomorphism class of $(X,\omega,D)$. By the {\bf homology/self-intersection/area sequence} of a pair $(X, \omega, D)$, 
where $D = \bigcup_{i=1}^n D_i$, we mean the tuple consisting of the homology classes 
$([D_1], \dots, [D_n])$, the self-intersection numbers $([D_1]^2, \dots, [D_n]^2)$, 
and the symplectic areas $(\omega(D_1), \dots, \omega(D_n))$, up to cyclic 
or anti-cyclic symmetry.

The symplectic blowup construction, which removes a Darboux ball and collapses its contact boundary via Hopf fibration, extends naturally to the pair $(X,\omega,D)$. Let $B^4(r)$ be the ball of radius $r$ in $\RR^4$ equipped with the standard symplectic form $\omega_{\text{std}}=dx_1\wedge dy_1+dx_2\wedge dy_2$. If the Darboux embedding $i:(B^4(r),\omega_{\text{std}})\hookrightarrow(X,\omega)$
\begin{itemize}
    \item intersects only one component $C$ in $D$ and $i^{-1}(D)=\{x_2=y_2=0\}$, then in the blowup symplectic manifold $(X',\omega')$, the proper transform $D'\subseteq(X',\omega')$ of $D$ will be an LCY divisor and the pair $(X',\omega',D')$ is called the {\bf non-toric blowup} of $(X,\omega,D)$;
    \item is centered at the intersection point of two components $C_1,C_2$ in $D$ and $i^{-1}(D)=\{x_1=y_1=0\}\cup \{x_2=y_2=0\}$, the total transform $D'\subseteq(X',\omega')$ of $D$ will be an LCY divisor and the pair $(X',\omega',D')$ is called the {\bf toric blowup} of $(X,\omega,D)$.
\end{itemize} 
Thus, a non-toric blowup yields an exceptional sphere transversely intersecting the proper transform of $C$; while the toric blowup produces an exceptional sphere that appears as a component in $D
'$ intersecting the proper transforms of $C_1,C_2$. They (their homology class) will be called the non-toric/toric exceptional sphere (class) respectively. When the radius $r$ is sufficiently small, such a Darboux embedding relative to $D$ always exists. We will occasionally use the notion {\bf small blowup} to refer to either non-toric or toric blowup performed with a very small size. Conversely, the symplectic blowdown operations can similarly be extended to the setting of pairs, involving either non-toric or toric exceptional spheres. Note that the orthogonal condition is preserved under the blowup/blowdown operations. 

 Let us recall the following Torelli theorem for LCY in rational surfaces from \cite[Proposition 2.11]{Enumerate}, which refines an earlier version in \cite{limak}.
\begin{theorem}[{\bf Symplectic Torelli theorem\textendash Rational Case}]\label{prop:Torelli}
    Let $(X,\omega,D)$, $(X',\omega',D')$ be two orthogonal pairs. If there is an integral isometry $\gamma:H_2(X';\ZZ)\rightarrow H_2(X;\ZZ)$ such that it maps $D'$ to $D$ componentwisely, and its real extension $\gamma_\RR:H_2(X';\RR)\rightarrow H_2(X;\RR)$ maps $PD([\omega'])$ to $PD([\omega])$, then $(X',\omega',D')$ is isomorphic to $(X,\omega,D)$.
\end{theorem}

We will frequently apply the above theorem in the following particular way: if two pairs are already known to be isomorphic and there is a correspondence among their divisor components, then\begin{itemize}
    \item performing toric blowups at corresponding nodes or non-toric blowups at corresponding components will yield new pairs that remain isomorphic if the Darboux balls have the same radius;
    \item performing toric blowdowns at corresponding toric exceptional spheres or non-toric blowdowns at non-toric exceptional spheres intersecting corresponding components will yield new pairs that remain isomorphic if the exceptional spheres have the same symplectic area.
\end{itemize} 

Theorem \ref{prop:Torelli} demonstrates that the isomorphism class of a pair is fully determined by its homological invariants. When considering only toric pair $(X,\omega,D)$ where $D=\cup C_i$, the following tautness result from \cite[Lemma 2.37]{Enumerate} shows that the isomorphism class can also be encoded by numerical invariants $[C_i]^2$ and $\omega([C_i])$.

\begin{lemma}[Tautness of toric pairs]\label{lem:torictaut}
    Two toric orthogonal Looijenga pairs with the same self-intersection sequence and symplectic area sequence up to cyclic and anti-cyclic permutation are isomorphic.
\end{lemma}

Now, let $\mathbb{LCY}$ denote the set of all the isomorphism classes of orthogonal Looijenga pairs in symplectic rational surfaces. This set can be naturally equipped with a quiver structure since different pairs are related by blowup operations: we put an arrow going from $\mathbb{X}$ to $\mathbb{X'}$ whenever there exists $(X,\omega,D)$ in class $\mathbb{X}\in\mathbb{LCY}$ and its toric or non-toric blowup $(X',\omega',D')$ is in class $\mathbb{X'}\in \mathbb{LCY}$.\footnote{Two vertices can only have at most one arrow between them.} Furthermore, we can take the free category $\mathcal{LCY}$ generated by this quiver $\mathbb{LCY}$ (in the sense of \cite[\rom{2}.7]{cat}): the objects of the category are the vertices of the quiver, the morphisms are paths between objects and the composition operation is given by concatenation of paths. For example, consider two paths of blowup operations
%Let $\mathcal{LCY}$ denote the category with objects being orthogonal LCY (Looijenga) pairs and morphisms being a path $\Gamma$ of LCY surgeries (symplectomorphisms, toric/non-toric blowups) modulo compositions of symplectomorphisms. More precisely, a morphism $\Gamma \in Hom((X,\omega,D), (X',\omega',D'))$ are sequences of LCYs $\{(X_i,\omega_i,D_i)\}_{i=0}^n$ with $(X_0,\omega_0,D_0)=(X,\omega,D), (X_n,\omega_n,D_n)=(X',\omega',D')$ and each $(X_{i+1},\omega_{i+1},D_{i+1})$ is obtained from $(X_i,\omega_i,D_i)$ by performing a toric/non-toric blowup/blowdown or a symplectomorphism. And we identify two consecutive symplectomorphisms:
 %\begin{align*}
%\Gamma
%&=\biggl \langle	\cdots\rightarrow (X_{i},\omega_i,D_i)\xrightarrow[]{f}(X_{i+1},\omega_{i+1},D_{i+1})\xrightarrow[]{g}(X_{i+2},\omega_{i+2},D_{i+2})\rightarrow\cdots\biggl \rangle\\
%&=\biggl \langle\cdots\rightarrow (X_i,\omega_i,D_i)\xrightarrow[]{g\circ f}(X_{i+2},\omega_{i+2},D_{i+2})\rightarrow \cdots\biggl \rangle.
%\end{align*}
\[[X_0,\omega_0,D_0]\xrightarrow{f_1}\cdots\xrightarrow{f_n}[X_n,\omega_n,D_n],\,\,\, [X_n,\omega_n,D_n]\xrightarrow{f_{n+1}}\cdots\xrightarrow{f_m}[X_m,\omega_m,D_m]\]
where each $f_i$ denotes either a toric or non-toric blowup. These paths define morphisms in \[\text{Hom}([X_0,\omega_0,D_0],[X_n,\omega_n,D_n])\text{ and }\text{Hom}([X_n,\omega_n,D_n],[X_m,\omega_m,D_m]),\] which can be composed to yield a morphism in
$\text{Hom}([X_0,\omega_0,D_0],[X_m,\omega_m,D_m])$ given by
\[[X_0,\omega_0,D_0]\xrightarrow{f_1}\cdots\xrightarrow{f_n}[X_n,\omega_n,D_n]\xrightarrow{f_{n+1}}\cdots\xrightarrow{f_m}[X_m,\omega_m,D_m].\]
We will also use $\mathcal{LCY}_{\geq l}$ (resp. $\mathbb{LCY}_{\geq l}$) to denote the full subcategory of $\mathcal{LCY}$ (resp. subset of $\mathbb{LCY}$) consisting of $[X,\omega,D]$ where $X=\CC\PP^2\#n\overline{\CC\PP}^2$ with $n\geq l$. 

For any $n\in \ZZ_+$, define $$\widetilde{\Delta}_n:=\{(a,b_1,\cdots,b_n)\in\RR^{n+1}\,|\,a\geq b_1+b_2+b_3, b_1\geq b_2\geq \cdots \geq b_n>0\}\footnote{When $n=0,1,2$, $a\geq b_1+b_2+b_3$ is replaced by $a>0,a>b_1,a>b_1+b_2$ respectively.}$$ to be a polyhedral cone in $\RR^{n+1}$ and let $$ \Delta_n:=\{(1,b_1,\cdots,b_n)\in\widetilde{\Delta}_n\,|\,b_1+\cdots+b_n<3\}$$ be a slice in $\widetilde{\Delta}_n$ which is a polytope of dimension $n$. We call $\widetilde{\Delta}_n$ and $\Delta_n$ the {\bf reduced cone} and {\bf $c_1$-positive polytope} respectively. The main result in \cite{KK17} indicates that $\Delta_n$ serves as a fundamental region in
\[\{[\omega]\,|\,\omega\text{ is a symplectic form with }[\omega]\cdot c_1(\omega)>0\}\subseteq H^2(\CC\PP^2\#n\overline{\CC\PP}^2;\RR)\]
 under the diffeomorphism group action and $\RR_+$-rescaling. Let us also introduce $\Delta_{-1}:=(0,1]$ viewed as such fundamental region for $S^2\times S^2$. 
  
By a {\bf framing} on a symplectic rational manifold $(X,\omega)$ when $X=\CC\PP^2\#n\overline{\CC\PP}^2$, we mean an ordered choice of a basis $\{H,E_1,\cdots,E_n\}\subseteq H_2(X;\ZZ)$ such that
\begin{itemize}
    \item they are represented by disjoint embedded symplectic spheres in $(X,\omega)$;
    \item the first Chern class $c_1(X,\omega)$ is Poincar\'e dual to $3H-E_1-\cdots-E_n$.
\end{itemize} The framing is further said to be {\bf reduced} if the sequence of symplectic areas $(\omega(H),\omega(E_1),\cdots,\omega(E_n))$ belongs to the reduced cone $\widetilde{\Delta}_n$. When $X=S^2\times S^2$, a framing refers to an ordered choice of basis $\{B,F\}\subseteq H_2(X;\ZZ)$ such that 
\begin{itemize}
    \item they are represented by disjoint embedded symplectic spheres;
    \item the first Chern class $c_1(X,\omega)$ is Poincar\'e dual to $2B+2F$. 
\end{itemize}
It is called reduced if and $\frac{\omega(B)}{\omega(F)}\in \Delta_{-1}$. Note that a reduced framing always exists for any symplectic rational surface by \cite{KK17}. To carry out the canonical construction of ATF presentations for LCY, we will rely on the following definition.

\begin{definition}
    A {\bf framing} $\mathfrak{f}$ on the set $\mathbb{LCY}$ is a choice of representative $(X,\omega,D)$ together with a reduced framing on $(X,\omega)$ for each isomorphism class $\mathbb{X}\in\mathbb{LCY}$. 
\end{definition}

\subsection{Symplectic reduced model $\XX_{\text{red}}$}\label{section:reducedmodel}

  By the property of $\Delta_n$, there is a well-defined map $$p:\mathbb{LCY}\rightarrow \bigsqcup_{n\geq -1}\Delta_n,$$ obtained by taking the symplectic class. This map is known to have finite fibers and \cite{Enumerate} provides a general counting formula when the fiber is over a restrictive region within $\Delta_n$. A nice feature of considering a symplectic class in the restrictive region is that there is a one-to-one correspondence between the elements in the fiber and the morphisms from specific objects in the category $\mathcal{LCY}$\footnote{These morphisms and objects were called the `blowup patterns' and `germs' in \cite{Enumerate}.}, which enables us to enumerate the morphisms instead. We now provide a more detailed discussion of these specific objects.
  
For any pair $(X,\omega,D)$, by successively blowing down toric or non-toric exceptional spheres of minimal symplectic area, \cite{limak} shows that there exists another pair $(X',\omega',D')$ with $b_2(X')\leq 2$ and $\text{Hom}([X',\omega',D'],[X,\omega,D])$ is not empty. The notion `symplectic minimal model' was used in \cite{limak} for the pair $(X',\omega',D')$ as a direct parallel of the one used in algebraic geometry (\cite[Theorem 2.4]{Friedman}). Due to the multiple choices of the exceptional sphere of minimal symplectic area at each step, the resulting $(X',\omega',D')$ (even the diffeomorphism type of $X'$) is generally not uniquely determined. For the purpose of enumeration, the refinement in \cite{Enumerate} resolves this non-uniqueness by introducing a framing, which specifies a distinguished exceptional sphere to blow down at each step. Given a framing $\mathfrak{f}$ on $\mathbb{LCY}$, \cite[Lemma 2.31]{Enumerate} shows that for any object $\XX=[X,\omega,D]\in\mathcal{LCY}_{\geq1 }$ with the reduced framing $\{H,E_1,\cdots,E_n\}\subseteq H_2(X;\ZZ)$, there exists a unique morphism from a specific $\XX'=[X',\omega',D']$ to $\XX$, determined by successively blowing down exceptional spheres in class $E_n,E_{n-1},\cdots$,  until 
\begin{itemize}
    \item either the underlying manifold becomes $\CC\PP^2\#\overline{\CC\PP}^2$;
    \item or the underlying manifold becomes $\CC\PP^2\#l\overline{\CC\PP}^2$ with $l\geq 2$, and the number of divisor components becomes $2$ with one component representing $E_l$.
\end{itemize} 
To distinguish it from the minimal model used in \cite{limak}, it will be called the {\bf symplectic reduced model} of $\XX$ and denoted by $\XX_{\text{red}}$. There is also a unique morphism $\Gamma\in \text{Hom}(\XX_{\text{red}},\XX)$ indicating the blowdown procedure.

The possible homology sequence of $\XX_{\text{red}}$ falls into one of the following cases: 
\begin{enumerate}[label=(\roman*)]
	\item $(2H,H-E_1)$;
    \item $((a+1)H-aE_1,(-a+2)H+(a-1)E_1)_{a\in\ZZ_+}$;
	\item $(aH+(-a+1)E_1,H-E_1,(-a+2)H+(a-1)E_1)_{a\in\ZZ_+}$;
	\item $(aH+(-a+1)E_1,H-E_1,(-a+1)H+aE_1,H-E_1)_{a\in\ZZ_+}$;
    \item $(3H-E_1-\cdots-E_{l-1}-2E_l,E_l)$.
\end{enumerate}
In the first four cases, $X'=\CC\PP^2\#\overline{\CC\PP}^2$; in the final case, $X'=\CC\PP^2\#l\overline{\CC\PP}^2$ with some $2\leq l\leq n$. Therefore, unlike minimal models, $X'$ will not be $S^2\times S^2$. Also, note that the divisors in type (\rn{2}), (\rn{3}), (\rn{4}) arise as unions of the fiber and certain sections of the non-trivial $S^2$-bundle. See \cite[Theorem 2.14]{Enumerate} for the dual graphs of these configurations.

Let us emphasize that the symplectic reduced model depends on the framing $\mathfrak{f}$. For symplectic classes lying on the boundary of the $c_1$-positive polytope, it is possible for two distinct morphisms, originating from different objects, to terminate at the same target object as shown in the example below.
Recall that our convention is to use the notation $H_{i_1\cdots i_k}$ to denote the homology class $H-E_{i_1}-\cdots-E_{i_k}$.

\begin{example}\label{exa:framing}
    Consider $X=\CC\PP^2\#3\overline{\CC\PP}^2$ equipped with a monotone symplectic form $\omega_{\text{mon}}$, framing $\{H,E_1,E_2,E_3\}$ and $D=C_H\cup C_{H_3}\cup C_{H_{12}}$ which contains three components in classes $H,H_3,H_{12}$. By a symplectomorphism generated by the Dehn twist along the Lagrangian sphere in class $H_{123}$, there will be another framing $\{h,e_1,e_2,e_3\}:=\{2H-E_1-E_2-E_3,H_{23},H_{13},H_{12}\}$. Thus, following the blowdown recipe to obtain symplectic reduced models, these two framings will give rise to
\[\XX^{(1)}_{\text{red}}=[\CC\PP^2\#\overline{\CC\PP}^2,\omega_{1},C_H\cup C_H\cup C_{H_1}],\,\, \XX^{(2)}_{\text{red}}=[\CC\PP^2\#\overline{\CC\PP}^2,\omega_{2},C_{2h-e_1}\cup C_h]\]
which are of type (\rn{3}) and (\rn{2}) respectively.\footnote{Indeed, $\omega_1$ and $\omega_2$ are actually symplectomorphic.} A description for the morphisms is given by
    \[(H,H,H_1)\xrightarrow[]{\text{nontoric}}(H,H,H_{12})\xrightarrow[]{\text{nontoric}}(H,H_3,H_{12})\in \text{Hom}(\XX_{\text{red}}^{(1)},[X,\omega_{\text{mon}},D])\]
    \[(2h-e_1,h)\xrightarrow[]{\text{nontoric}}(2h-e_1-e_2,h)\xrightarrow[]{\text{toric}}(2h-e_1-e_2-e_3,h_3,e_3)\in \text{Hom}(\XX_{\text{red}}^{(2)},[X,\omega_{\text{mon}},D]).\]
\end{example}

\subsection{$\varepsilon$-replacement $\XX_{\varepsilon}$}\label{subsection:replace}

%So let's first suppose the following:

%\begin{assumption}\label{ass0}
 %  $E_2$ doesn't appear as the non-toric blowup on the component $2H$ in case $(\lowerromannumeral{1})$; $(a+1)H-aE_1$ in case $(\lowerromannumeral{2})$ or $aH+(-a+1)E_1$ in case $(\lowerromannumeral{3})$. 
%\end{assumption}

%Under the assumption \ref{ass0}, 
From now on, we always fix a framing $\mathfrak{f}$ on $\mathbb{LCY}$. Given some $\XX=[X,\omega,D]\in\mathcal{LCY}_{\geq 2}$, suppose we have the symplectic reduced model $\XX_{\text{red}}$ together with a unique morphism $\Gamma\in \text{Hom}(\XX_{\text{red}},\XX)$ of the form
\[\XX_{\text{red}}\xrightarrow{f_1}\cdots\xrightarrow{f_{n-1}} \XX, \,\,\,\, \text{if }\XX_{\text{red}}\text{ is of type (\rn{1}), (\rn{2}), (\rn{3}), (\rn{4})};\]
\[\XX_{\text{red}}\xrightarrow{f_l}\cdots\xrightarrow{f_{n-1}} \XX, \,\,\,\, \text{if }\XX_{\text{red}}\text{ is of type (\rn{5})},\]where each $f_k$ denotes either a toric or non-toric blowup producing an exceptional sphere in class $E_{k+1}$. Let us introduce the following index set encoding the position of each toric blowup
\[\mathcal{D}_i:=\{2\leq k\leq n\,|\,[D_i]\cdot E_k=1, f_{k-1}\text{ is a toric blowup}\},\,\,\,\, \text{if }\XX_{\text{red}}\text{ is of type (\rn{1}), (\rn{2}), (\rn{3}), (\rn{4})};\]
\[\mathcal{D}_i:=\{l+1\leq k\leq n\,|\,[D_i]\cdot E_k=1, f_{k-1}\text{ is a toric blowup}\},\,\,\,\, \text{if }\XX_{\text{red}}\text{ is of type (\rn{5})}\]
for any component $D_i\subseteq D$. Let $\varepsilon$ be a positive real number which is sufficiently small in the sense that $\varepsilon\ll$ min$\{\omega(D_i)\,|\,D_i\text{ is a component of } D\}$. The aim of this section is to provide a recipe for constructing a canonical assignment $\XX_{\varepsilon}\in \mathbb{LCY}$, which will be called the {\bf $\varepsilon$-replacement}, based on the data of $\XX_{\text{red}}$ and $\Gamma$ (indeed, just $f_1$ and certain $\mathcal{D}_i$'s). 
It adds more components to the divisor by performing toric blowups, making it possible 
to blow down non-toric exceptional spheres to obtain a toric pair. A simple example is given by the pair with homology sequence
$(2H - E_{1},\, H - E_{2})$, where it is impossible to obtain a toric pair 
without first carrying out a toric blowup. As mentioned in the introduction, this $\XX_{\varepsilon}$ is the symplectic analogue of intermediate pair $(\widetilde{Y},\widetilde{D})$ in the construction of the holomorphic toric model (\cite{GHK}) $(\overline{Y},\overline{D})$ for the anticanonical pair $(Y,D)$:
\begin{align}\label{equation:holtoricmodel}
    (\overline{Y},\overline{D})\xleftarrow{\text{non-toric}}(\widetilde{Y},\widetilde{D})\xrightarrow{\text{toric}}(Y,D).
\end{align}

The following five diagrams \ref{fig:curve1},  \ref{fig:curve2}, \ref{fig:curve3}, \ref{fig:curve4} and \ref{fig:curve5} depict divisor configurations corresponding to the five types of symplectic reduced models discussed in Section \ref{section:reducedmodel}, and illustrate the possible blowup patterns $\Gamma$ from $\XX_{\text{red}}$ to $\XX$. Note that we use `$\cdots$' in the configurations to denote the potential components between two components drawn as curves in the figures. Let us now define $\XX_{\varepsilon}$ for each case individually.

 When $\XX_{\text{red}}$ is of type (\lowerromannumeral{1}), let $D_1,D_2\subseteq D$ be the proper transforms of the $H_1,2H$-components of $\XX_{\text{red}}$. There are two subcases determined by the first blowup $f_1$ in $\Gamma$.  First, suppose $f_1$ is the non-toric blowup at the $2H$-component. We will refer to this case as type (\rn{1})-$a$.
\begin{itemize}
    \item If $\#\mathcal{D}_1\cap\mathcal{D}_2\geq2$, then $[D_1]\cdot[D_2]=0$ and we define $\XX_{\varepsilon}:=\XX$.
     \item If $\#\mathcal{D}_1\cap\mathcal{D}_2=1$, then $[D_1]\cdot[D_2]=1$ and we define $\XX_{\varepsilon}$ to be the toric blowup centered at the node $D_1\cap D_2$ of size $\varepsilon$.
      \item If $\#\mathcal{D}_1\cap\mathcal{D}_2=0$, then $[D_1]\cdot[D_2]=2$ and we define $\XX_{\varepsilon}$ to be the two-fold toric blowups centered at two nodes $D_1\cap D_2$ both of size $\varepsilon$.
\end{itemize}
Otherwise, suppose $f_1$ is not the non-toric blowup at the $2H$-component. We will refer to this case as type (\rn{1})-$b$.
\begin{itemize}
    \item If $\mathcal{D}_2\not\subseteq \mathcal{D}_1$, define $\XX_{\varepsilon}:=\XX$.
     \item If $\mathcal{D}_2\subseteq \mathcal{D}_1$ but $\mathcal{D}_2$ is non-empty, then the proper transform of the toric exceptional sphere produced by $f_{\min \mathcal{D}_2-1}$ will intersect $D_2$ at a node. Define $\XX_{\varepsilon}$ to be the toric blowup centered at that node of size $\varepsilon$.
      \item If $\mathcal{D}_2$ is empty, then so is $\mathcal{D}_1$ and $D_1\cap D_2$ must have two nodes. Let us first perform a toric blowup at one node of size $\varepsilon$, which produces a toric exceptional component $C_{E_{n+1}}$. Then we perform another toric blowup at the node between $C_{E_{n+1}}$ and the proper transform of $D_2$ of size $\frac{\varepsilon}{2}$ to define $\XX_{\varepsilon}$. It is a consequence of the Torelli Theorem \ref{prop:Torelli} that the class $\XX_{\varepsilon}$ does not depend on choice of the node in $D_1\cap D_2$.
\end{itemize}

    \begin{figure*}[]\label{fig:curve1}
		\includegraphics*[width=\linewidth]{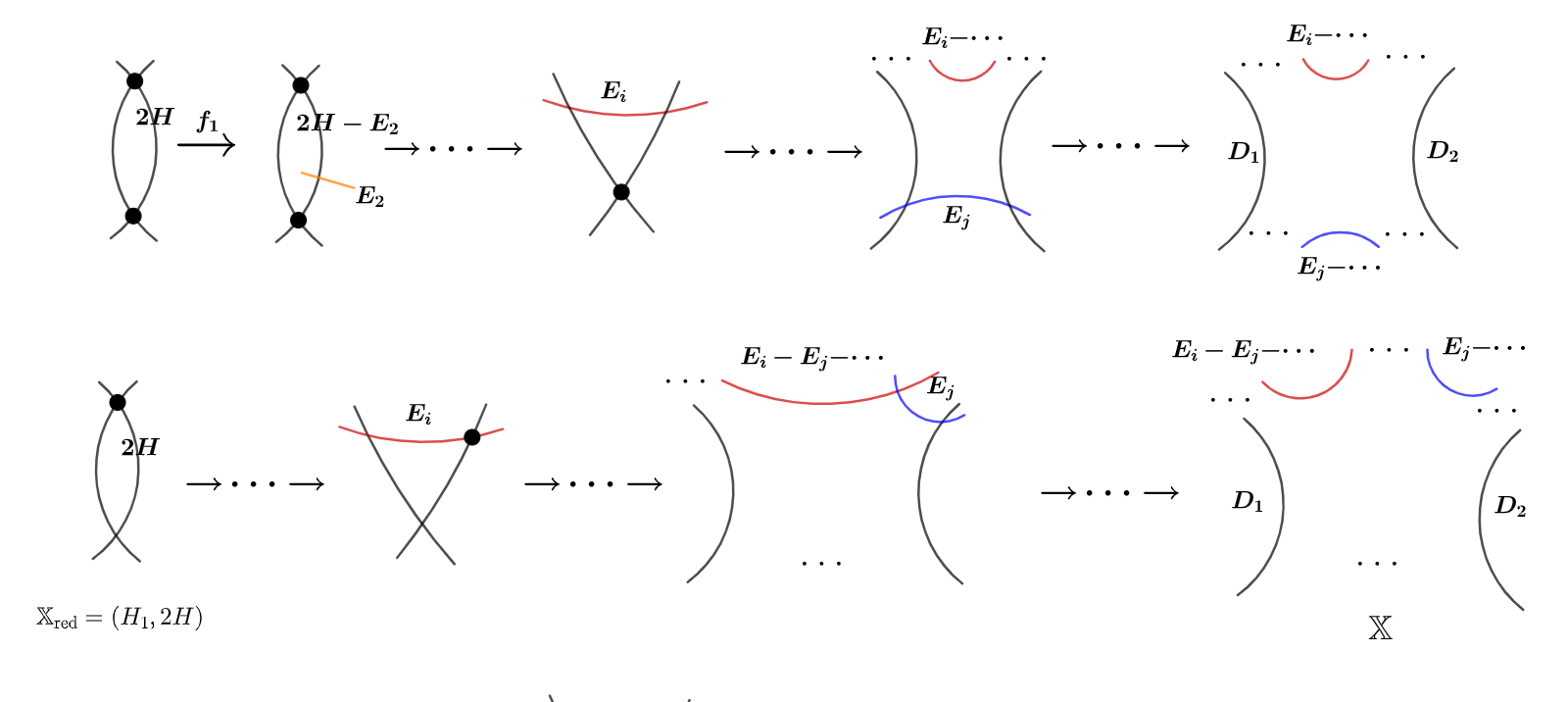}
  \caption{$\XX_{\text{red}}$ is of type (\rn{1}).}\label{fig:curve1}
	\end{figure*}

\begin{rmk}\label{rmk:f1}
    The reason we examine $f_1$ and divide type (\rn{1}) into two subcases is that we require that the symplectic reduced model can only be $\CC\PP^2\#\overline{\CC\PP}^2$. Many LCYs can be realized as blowups of minimal models in both $\CC\PP^2\#\overline{\CC\PP}^2$ and $S^2\times S^2$. For the construction of symplectic toric model in the next section, it is more natural to view some of these as blowups of LCYs in $S^2\times S^2$. However, to avoid introducing additional types beyond the current classification (which we keep to five), we opt not to treat $S^2\times S^2$ explicitly and instead introduce two subcases of type (i), distinguished by the first blow-up $f_1$.
\end{rmk}
    
When $\XX_{\text{red}}$ is of type (\lowerromannumeral{2}), let $D_1,D_2\subseteq D$ be the proper transforms of two components of $\XX_{\text{red}}$. 
\begin{itemize}
   \item If $\#\mathcal{D}_1\cap\mathcal{D}_2\geq2$, then $[D_1]\cdot[D_2]=0$ and we define $\XX_{\varepsilon}:=\XX$.
     \item If $\#\mathcal{D}_1\cap\mathcal{D}_2=1$, then $[D_1]\cdot[D_2]=1$ and we define $\XX_{\varepsilon}$ to be the toric blowup centered at the node $D_1\cap D_2$ of size $\varepsilon$.
      \item If $\#\mathcal{D}_1\cap\mathcal{D}_2=0$, then $[D_1]\cdot[D_2]=2$ and we define $\XX_{\varepsilon}$ to be the two-fold toric blowups centered at two nodes $D_1\cap D_2$ both of size $\varepsilon$.
\end{itemize}

      \begin{figure*}[]\label{fig:curve2}
		\includegraphics*[width=\linewidth]{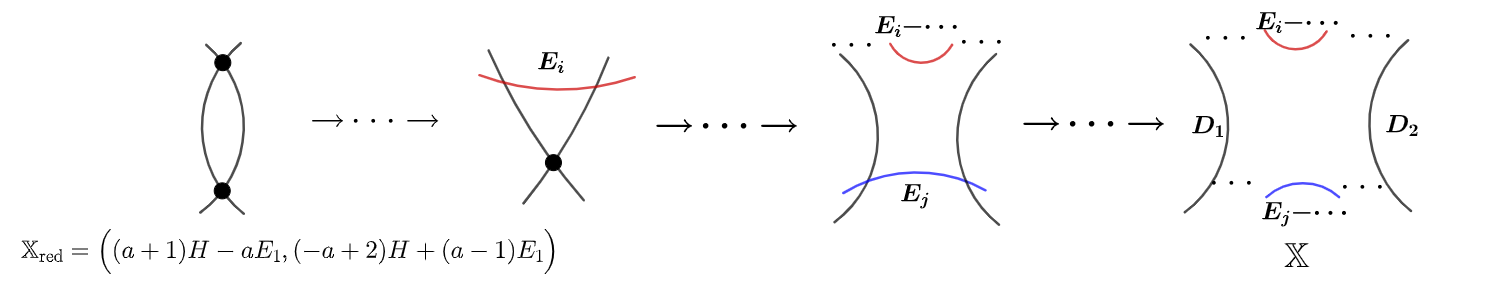}
  \caption{$\XX_{\text{red}}$ is of type (\rn{2}). }\label{fig:curve2}
	\end{figure*}
    
When $\XX_{\text{red}}$ is of type (\lowerromannumeral{3}), let $D_1,D_2\subseteq D$ be the proper transforms of two sections in classes $aH+(-a+1)E_1,(-a+2)H+(a-1)E_1$ of $\XX_{\text{red}}$. 
\begin{itemize}
   \item If $\#\mathcal{D}_1\cap\mathcal{D}_2\geq 1$, then $[D_1]\cdot[D_2]=0$ and we define $\XX_{\varepsilon}:=\XX$.
    \item If $\#\mathcal{D}_1\cap\mathcal{D}_2=0$, then $[D_1]\cdot [D_2]=1$ and we define $\XX_{\varepsilon}$ to be the toric blowup centered at the node $D_1\cap D_2$ of size $\varepsilon$.  
\end{itemize}  

   \begin{figure*}[]\label{fig:curve3}
		\includegraphics*[width=\linewidth]{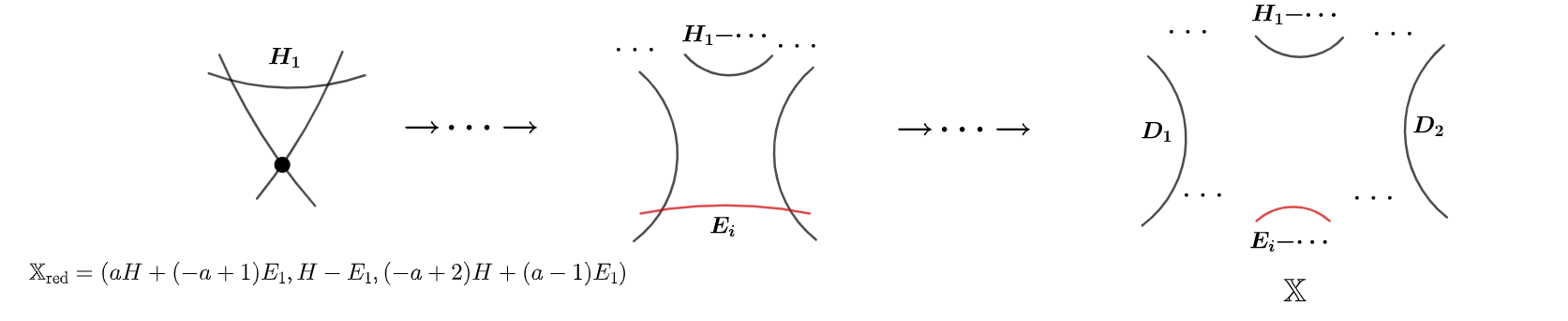}
  \caption{$\XX_{\text{red}}$ is of type (\rn{3}). }\label{fig:curve3}
	\end{figure*}

When $\XX_{\text{red}}$ is of type (\lowerromannumeral{4}), we always define $\XX_{\varepsilon}:=\XX$.

  \begin{figure*}[]\label{fig:curve4}
		\includegraphics*[width=\linewidth]{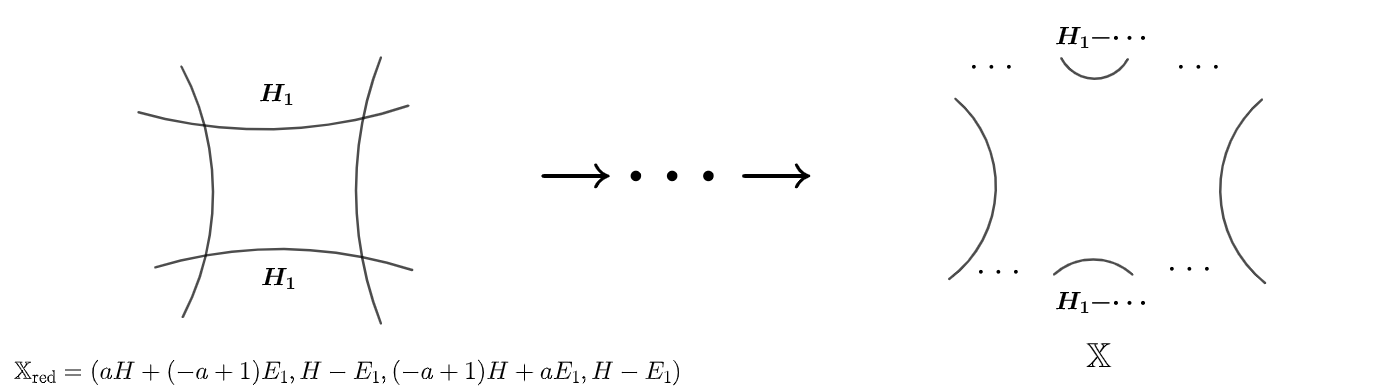}
  \caption{$\XX_{\text{red}}$ is of type (\rn{4}). }\label{fig:curve4}
	\end{figure*}

When $\XX_{\text{red}}$ is of type (\lowerromannumeral{5}), let $D_1,D_2\subseteq D$ be the proper transforms of two components of $\XX_{\text{red}}$. 
\begin{itemize}
   \item If $\#\mathcal{D}_1\cap\mathcal{D}_2\geq2$, then $[D_1]\cdot[D_2]=0$ and we define $\XX_{\varepsilon}:=\XX$.
     \item If $\#\mathcal{D}_1\cap\mathcal{D}_2=1$, then $[D_1]\cdot[D_2]=1$ and we define $\XX_{\varepsilon}$ to be the toric blowup centered at the node $D_1\cap D_2$ of size $\varepsilon$.
      \item If $\#\mathcal{D}_1\cap\mathcal{D}_2=0$, then $[D_1]\cdot[D_2]=2$ and we define $\XX_{\varepsilon}$ to be the two-fold toric blowups centered at two nodes $D_1\cap D_2$ both of size $\varepsilon$.
\end{itemize}
  \begin{figure*}[]\label{fig:curve5}
		\includegraphics*[width=\linewidth]{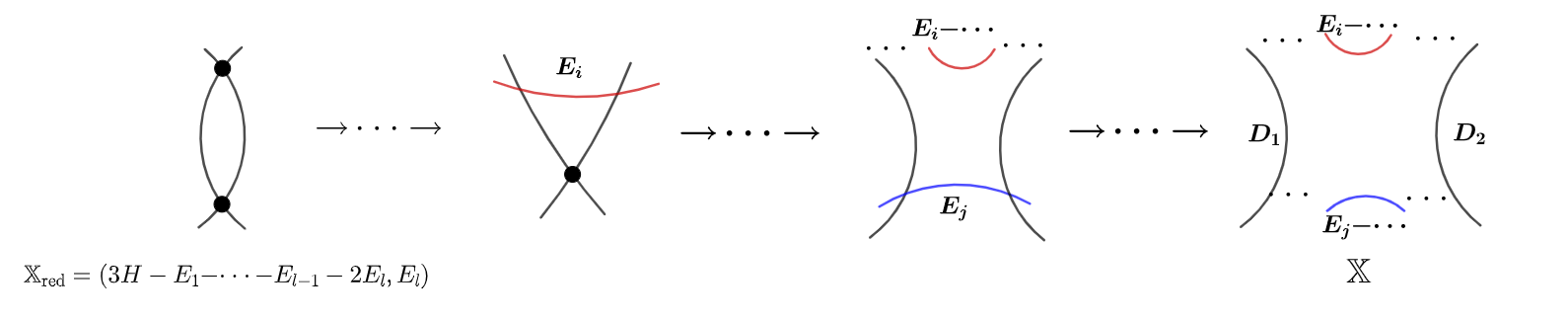}
  \caption{$\XX_{\text{red}}$ is of type (\rn{5}). }\label{fig:curve5}
	\end{figure*}
 
 %Note that in each case we have marked some nodes in the picture. The only subtlety arises in case (\lowerromannumeral{1}) and $f$ is the non-toric blowup on the component $2H$ since two nodes of $\XX_{red}$ have equal status and both can be the marked nodes shown in the picture. To eliminate this ambiguity, we always let $E_i$ be the exceptional class of the first toric blowup in $\Gamma$. And the second marked node is always set to be the intersection between this $E_i$ and $2H-E_i-\cdots$. 
 Therefore, there will always be some morphism $\Gamma_\varepsilon\in \text{Hom}(\XX,\XX_\varepsilon)$ which either records the small blowup when $\XX_{\varepsilon}\neq \XX$,  or is the identity when $\XX_\varepsilon=\XX$. Note that this $\Gamma_\varepsilon$ is not unique since we do not specify the order for two blowups for type (\rn{1})-$a$, (\rn{2}) and  (\rn{5}) when $\#\mathcal{D}_1\cap\mathcal{D}_2\geq 2$. 
 
 Now, let us further introduce the following indices $i$ and $j$, defined according to the procedure below.
 \begin{itemize}
     \item For type (\rn{1})-$a$, (\rn{2}) and (\rn{5}), let $i:=\min \{\mathcal{D}_1\cap\mathcal{D}_2,n+1\}$ and  $j:=\min \{\mathcal{D}_1\cap\mathcal{D}_2\setminus\{i\},n+2-\#\mathcal{D}_1\cap\mathcal{D}_2\}$.
     \item For type (\rn{3}), let $i:=\min \{\mathcal{D}_1\cap\mathcal{D}_2,n+1\}$.
      \item For type (\rn{1})-$b$, if $\mathcal{D}_2\not\subseteq\mathcal{D}_1$,
      let $j:=\min \{\mathcal{D}_2\setminus\mathcal{D}_1\}$ and $i$ be one of the two elements in $\mathcal{D}_1\cap\mathcal{D}_2$ such that the proper transform in $D$ of the toric exceptional sphere produced by $f_{i-1}$ has non-trivial pairing with $E_j$; if $\mathcal{D}_2\subseteq\mathcal{D}_1$ but $\mathcal{D}_2$ is non-empty, let $i:=\min\{\mathcal{D}_2\}$ and $j:=n+1$; if $\mathcal{D}_2$ is empty, let $i:=n+1$ and $j:=n+2$.
 \end{itemize}
 Intuitively, these two indices record the first one or two toric blowups that ensure the divisor 
contains the proper transforms of four components in the toric Hirzebruch pair $\XX_{\text{tor}}'$ which will be defined in the next section. Now, we will use these two indices $i,j$ to define two additional quantities $\dd_i,\dd_j$ based on the following recipe. Note that in each case, certain nodes have been marked as dots in the diagrams above. The blowup patterns $\Gamma$ shown in the diagrams can be deceptive, as they all include toric blowups at those marked nodes which produce the red and blue curves shown. In general, $\Gamma$ may or may not contain these toric blowups.

 If $\Gamma$ does include them, then $\XX=\XX_{\varepsilon}$, and 
 we will record the symplectic areas of the toric exceptional classes $E_i,E_j$ as $\dd_i,\dd_j$ respectively. Otherwise, there will be one or two marked nodes in the configuration diagrams which do not admit toric blowups (meaning the red or blue curve actually does not appear). In such situations, $\XX_{\varepsilon}\neq \XX$ and the indices $i$ or $j$ may exceed $n$. In the special case of type (\rn{1})-$b$, if $i=n+1$ and $j=n+2$, we will let $\dd_i:=\varepsilon$ and $\dd_j:=\frac{\varepsilon}{2}$; while in all other cases, we will let $\dd_i$ and $\dd_j$ be $\varepsilon$ whenever the index $i$ or $j$ is greater than $n$. The quantities $\dd_i,\dd_j$ will be referred to as the {\bf distinguished blowup sizes} and used in the definition of the symplectic toric model in the next section.

 %We then just let $\XX_{\varepsilon}$ be the toric blowups of $\XX$ with small sizes at those nodes. In all cases other than case (\lowerromannumeral{1}), we can just take the sizes both to be $\varepsilon$. When we need two blowups in case \lowerromannumeral{1}$(a)$, the blowup sizes are chosen to be $\varepsilon$ and $\frac{\varepsilon}{2}$ for the first and second respectively. In this case we also call those $\varepsilon$ (or $\frac{\varepsilon}{2}$ in (\lowerromannumeral{1})) the {\bf distinguished blowup sizes} of $\XX_{\varepsilon}$ and let $\dd_i,\dd_j$ denote them\footnote{If the framing on $\XX$ has $\{H,E_1,\cdots,E_n\}$, $i,j$ can be understood as $n+1,n+2$.}.  
 
  \begin{example}\label{example:ex}
  Let us work through some examples to clarify the assignment of $\XX_{\varepsilon}$ and the distinguished blowup sizes $\dd_i,\dd_j$ defined in this section. The first column of the following table indicates the homology type for $\XX$, where we adopt the convention $H_{i_1\cdots i_k}=H-E_{i_1}-\cdots-E_{i_k}$. Their reduced models and types are presented in the second and third columns respectively. Although the symplectic form is not explicitly specified, one can just simply assume that $\omega$ in $\XX$ satisfies $\omega(E_{i+1})\ll\omega(E_i)$ for all $i$.
     
     \begin{center}
\begin{tabular}{ |c|c|c| } 
 \hline
 $\XX$ & $\XX_{\text{red}}$ & \text{Type} \\ 
 \hline
 $(H_{134},E_3,H+H_{23})$ & $(H_1,2H)$ & (\rn{1})-$a$\\ 
 \hline
$(H_{1234},E_3,H+H_{34},E_4)$ & $(H_1,2H)$ & (\rn{1})-$b$\\ 
 \hline
$(H_{123},2H-E_4)$ & $(H_1,2H)$ & (\rn{1})-$b$ \\
\hline
$(8H_1+H_{2},-7H_1+H_3)$ & $(8H_1+H,-7H_1+H)$ & (\rn{2})  \\
\hline
$(H_{1234},E_3,4H-3E_1-E_3,-2H+3E_1)$& $(H_1,4H-3E_1,-2H+3E_1)$ & (\rn{3})\\
\hline
$(H_{14},2H-E_1-E_2,E_2,H_{123},-H+2E_1)$& $(H_1,2H-E_1,H_1,-H+2E_1)$& (\rn{4})\\
\hline
$(2H+H_{1234567}-E_6,E_7-E_8,E_6-E_7)$& $(2H+H_{12345}-2E_6,E_6)$& (\rn{5}) \\
\hline
\end{tabular}
\end{center}

The table below lists the corresponding $\varepsilon$-replacement, index pairs $(i,j)$ and distinguished blowup sizes $\dd_i,\dd_j$. Note that $j,\dd_j$ are not defined for type (\rn{3}), and $i,j,\dd_i,\dd_j$ are not defined for type (\rn{4}).

  \begin{center}
\begin{tabular}{ |c|c|c| } 
 \hline
  $\XX_{\varepsilon}$ & $(i,j)$& $(\dd_i,\dd_j)$ \\ 
 \hline
 $(H_{1345},E_3,H+H_{235},E_5)$& $(3,5)$ &$(\omega(E_3),\varepsilon)$\\ 
 \hline
$(H_{1234},E_3-E_5,E_5,H+H_{345},E_4)$ &  $(3,5)$&$(\omega(E_3),\varepsilon)$\\ 
 \hline
  $(H_{1235},E_5-E_6,E_6,H+H_{456})$ & $(5,6)$ & $(\varepsilon,\frac{\varepsilon}{2})$\\
\hline
$(8H_1+H_{245},E_4,-7H_1+H_{345},E_5)$&$(4,5)$& $(\varepsilon,\varepsilon)$ \\
\hline
$(H_{1234},E_3,4H-3E_1-E_3,-2H+3E_1)$ &$i=3$, $j$ undefined & $\dd_i=\omega(E_3)$, $\dd_j$ undefined\\
\hline
$(H_{14},2H-E_1-E_2,E_2,H_{123},-H+2E_1)$& undefined & undefined\\
\hline
$(2H+H_{12345679}-E_6,E_7-E_8,E_6-E_7-E_9,E_9)$& $(7,9)$& $(\omega(E_7),\varepsilon)$ \\
\hline
\end{tabular}
\end{center}
    
  \end{example}   
%     \begin{itemize}
 %        \item If $\XX$ has homology sequence $(H_{134},E_3,2H-E_2-E_3)$, then $\XX_{\varepsilon}$ will be $(H_{1345},E_3,2H-E_2-E_3-E_5,E_5)$ where $E_5$ has area $\varepsilon$.
 %        \item If $\XX$ has homology sequence $(H_{124},E_3,2H-E_3-E_4,E_4)$, then $\XX_{\varepsilon}$ will be $(H_{124},E_3-E_5,E_5,2H-E_3-E_4-E_5,E_4)$ where $E_5$ has area $\varepsilon$.
 %        \item If $\XX$ has homology sequence $(H_{123},2H-E_4)$, then $\XX_{\varepsilon}$ will be $(H_{1235},E_5-E_6,E_6,2H-E_4-E_5-E_6)$ where $E_5,E_6$ has areas $\varepsilon,\frac{\varepsilon}{2}$ respectively.
 %          \item If $\XX$ has homology sequence $(100H-99E_1-E_2,-97H+98E_1-E_3)$, then $\XX_{\varepsilon}$ will be $(100H-99E_1-E_2-E_4-E_5,E_4,-97H+98E_1-E_3-E_4-E_5,E_5)$ where $E_4,E_5$ both have areas $\varepsilon$.
  %   \end{itemize}

\subsection{Symplectic toric model $\XX_{\text{tor}}$}\label{section:toricmodel}
Given $\XX=[X,\omega,D]\in \mathcal{LCY}_{\geq2}$, consider its symplectic reduced model $\XX_{\text{red}}$ and $\varepsilon$-replacement $\XX_{\varepsilon}$ introduced in the previous sections. The composition of the canonical $\Gamma\in\text{Hom}(\XX_{\text{red}},\XX)$ with some non-canonical $\Gamma_\varepsilon\in\text{Hom}(\XX,\XX_\varepsilon)$ yields a blowup pattern $\Gamma_\varepsilon\circ\Gamma\in\text{Hom}(\XX_{\text{red}},\XX_\varepsilon)$:
\[\XX_{\text{red}}\xrightarrow{f_1}\cdots\xrightarrow{f_m} \XX_\varepsilon. \]

The purpose of this section is to modify the above blowup patterns to obtain a new morphism in $\text{Hom}(\XX_{\text{tor}}',\XX_\varepsilon)$:
\[\XX_{\text{tor}}'\xrightarrow{t_1}\cdots\xrightarrow{t_s}\XX_{\text{tor}}\xrightarrow{n_1}\cdots\xrightarrow{n_r} \XX_\varepsilon,\]
where 
\begin{itemize}
    \item $n_1,\cdots,n_r$ are non-toric blowups;
    \item $t_1,\cdots,t_s$ are toric blowups;
    \item $\XX_{\text{tor}}$ is a canonically assigned toric pair, which is the analogue of $(\widetilde{Y},\widetilde{D})$ in (\ref{equation:holtoricmodel}) and will be called the {\bf symplectic toric model};
    \item $\XX_{\text{tor}}'$ is another canonically assigned toric pair which is the boundary divisor of a toric Hirzebruch surface.
\end{itemize}

Let us begin with the construction of the non-toric blowups $n_1,\cdots,n_r$ and the symplectic toric model $\XX_{\text{tor}}$. The following lemma enables us to artificially select certain non-toric exceptional classes to perform blowdowns.

%Given $\XX=[X,\omega,D]$, we now introduce another path of blowdowns different from the ones leading to the reduced model. An exceptional class in $H_2(X;\ZZ)$ is a class which can be represented by symplectic embedded $(-1)$-sphere. And it's called {\bf non-toric} if it has trivial intersectionpairing with all but one of the homology classes of the irreducible components of $D$ and the only non-trivial pairing is $1$.

\begin{lemma}\label{lemma:exceptional}
    Given a set $\mathcal{E}=\{S_1,\cdots,S_k\}$ of pairwise orthogonal non-toric exceptional classes, one can choose their embedded symplectic representatives $C_{S_1},\cdots,C_{S_k}$ which are pairwise disjoint, with each intersecting exactly one component of $D$ transversely.
\end{lemma}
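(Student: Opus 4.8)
The plan is to take arbitrary symplectic embedded representatives of the classes $S_1, \dots, S_k$ and then modify them, first to achieve pairwise disjointness and transverse intersection with $D$, and then to control how each representative meets the divisor $D$. I would work one class at a time and use positivity of intersections for $J$-holomorphic representatives.

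First I would choose an almost complex structure $J$ tamed by $\omega$ for which each component $D_i$ of $D$ is $J$-holomorphic; this is possible because the $D_i$ are symplectic and have orthogonal intersections, so after the standard deformation making intersections orthogonal one can find a compatible $J$ that is integrable near the intersection points and for which all components are pseudoholomorphic. Since each $S_m$ is an exceptional class, for a generic such $J$ it has a (unique, by automatic transversality / the fact that exceptional classes have a unique embedded representative) embedded $J$-holomorphic sphere representative $C^{S_m}$. The key leverage is positivity of intersections: since $S_m \cdot S_{m'} = 0$ for the pairwise orthogonal classes and both $C^{S_m}, C^{S_{m'}}$ are $J$-holomorphic, their geometric intersection number is zero, forcing them to be genuinely disjoint. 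Likewise, $S_m \cdot [D_i]$ is $0$ for all but one component and equals $1$ for that component (this is exactly the definition of \emph{non-toric}), so positivity of intersections upgrades these homological numbers to geometric statements: $C^{S_m}$ is disjoint from every component it pairs trivially with, and meets the single distinguished component $D_i$ in exactly one point, transversally.

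The remaining point is to ensure each $C^{S_m}$ meets $D$ only at interior (smooth) points of a single component and not at a node, and that the single transverse intersection persists after the required perturbations. I would argue that a node lies in two components $D_i, D_{i'}$; if $C^{S_m}$ passed through a node it would contribute positively to both $S_m \cdot [D_i]$ and $S_m \cdot [D_{i'}]$, contradicting the fact that the non-toric class pairs nontrivially with only one component. Hence the intersection point automatically avoids the nodes and lies in the smooth locus of the distinguished component, where transversality is the generic local picture. A small $J$-generic perturbation (relative to the fixed $J$ near $D$, which keeps the $D_i$ holomorphic) then makes the finitely many spheres $C^{S_1}, \dots, C^{S_k}$ simultaneously embedded, pairwise disjoint, and transverse to $D$ without changing any of the homological intersection counts.

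The main obstacle I expect is the simultaneous control: guaranteeing that a single choice of $J$ makes \emph{all} of the components $D_i$ holomorphic \emph{and} makes \emph{all} of the exceptional spheres $C^{S_m}$ regular and embedded at once. One must check that the space of $\omega$-tamed $J$ keeping $D$ holomorphic is still large enough (nonempty and path-connected) for the genericity arguments giving unique embedded representatives of exceptional classes to apply, and that positivity of intersections is available in this relative setting. This is where I would invoke the standard technology for $J$-holomorphic curves in rational surfaces relative to a symplectic configuration, rather than carrying out the transversality analysis by hand.
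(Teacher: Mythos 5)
Your proposal is correct and follows essentially the same route as the paper: the paper's proof simply invokes Theorem 1.2.7 of McDuff--Opshtein (the relative curve technology you gesture at in your last paragraph) to obtain a single $\omega$-tamed $J$ making all components of $D$ and all classes $S_i$ simultaneously $J$-holomorphic, and then concludes by positivity of intersections exactly as you do. The extra details you supply (node avoidance, uniqueness of embedded representatives) are fine but not needed beyond what positivity of intersections already gives.
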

\begin{proof}
    As a direct consequence of \cite[Theorem 1.2.7]{MO} (or \cite[Lemma 2.4]{limak}), one can obtain an $\omega$-tamed almost complex structure $J$ such that each component in $D$ and every $S_i$ are represented by $J$-holomorphic spheres. The result then follows from positivity of intersection.
\end{proof}

%Assume the blowup pattern from the reduced model is given by $\Gamma=\bigl\langle \XX_{red}\xrightarrow{f_2}\XX_2\xrightarrow{f_3}\cdots\xrightarrow{f_n} \XX \bigl\rangle\in Hom(\XX_{red},\XX)$ (for case (\lowerromannumeral{5}) the index of blowup is starting from $f_{l+1}$). 

Consider the partition $\{1,\cdots,m\}=\mathcal{N}\cup\mathcal{T}$ where
\[\mathcal{N}:=\{k\in[1,m]\,|\,f_k\text{ is a non-toric blowup}\},\] 
\[\mathcal{T}:=\{k\in[1,m]\,|\,f_k\text{ is a toric blowup}\}\] are the index sets for $f_*$ being non-toric/toric blowups. Now, for the $\varepsilon$-replacement $\XX_{\varepsilon}$, we choose the non-toric exceptional set $\mathcal{E}$ as follows:
\begin{enumerate}[label=(\roman*)]
    \item  
    $\begin{cases}
      \{E_1,H_{2i},H_{2j}\}\cup\{E_{k+1}\,|\, k\in\mathcal{N},k\geq 2\} , & f_1 \text{ is non-toric blowup on }2H \\
      \{E_1,H_{ij}\}\cup\{E_{k+1}\,|\, k\in\mathcal{N}\}, & \text{otherwise}
    \end{cases}$

    \item  
    $\begin{cases}
      \{H_{12},H_{1i},H_{1j}\}\cup\{E_{k+1}\,|\, k\in\mathcal{N},k\geq 2\} , & f_1 \text{ is non-toric blowup on }(a+1)H-aE_1 \\
      \{H_{1i},H_{1j}\}\cup\{E_{k+1}\,|\, k\in\mathcal{N}\}, & \text{otherwise}
    \end{cases}$
    
  \item  
    $\begin{cases}
      \{H_{12},H_{1i}\}\cup\{E_{k+1}\,|\, k\in\mathcal{N},k\geq 2\} , & f_1 \text{ is non-toric blowup on }aH-(a-1)E_1 \\
      \{H_{1i}\}\cup\{E_{k+1}\,|\, k\in\mathcal{N}\}, & \text{otherwise}
    \end{cases}$
    
      \item  
        $\begin{cases}
      \{H_{12}\}\cup\{E_{k+1}\,|\, k\in\mathcal{N},k\geq 2\} , & f_1 \text{ is non-toric blowup on }aH-(a-1)E_1 \\
      \{E_{k+1}\,|\, k\in\mathcal{N}\}, & \text{otherwise}
    \end{cases}$
    
      \item  
    $\{E_3,E_4,\cdots,E_{l-1},H_{1l},H_{2l},H_{li},H_{lj}\}\cup\{E_{k+l}\,|\, k\in\mathcal{N}\}$
\end{enumerate}

Note that the choice of $\mathcal{E}$ depends on $f_1$, for the same reason discussed in Remark \ref{rmk:f1}. One can easily verify that all the cases listed above satisfy the condition of Lemma \ref{lemma:exceptional}. Therefore, we can perform non-toric blowdowns for all such exceptional spheres. By Torelli Theorem \ref{prop:Torelli} (and the discussion following it), the isomorphism class of the pair after blowdowns is uniquely determined once the exceptional classes are specified. We thus obtain a canonically assigned $\XX_{\text{tor}}$. Moreover, $\XX_{\text{tor}}$ is indeed a toric pair,   as confirmed by a straightforward calculation of the change in $b_2$ and the number of components. The corresponding non-toric blowups will be denoted by $n_1,\cdots,n_r$, giving rise to some morphism in $\Gamma_t\in\text{Hom}(\XX_{\text{tor}},\XX_{\varepsilon})$. However, since the order of these non-toric blowups is not specified, $\Gamma_t$ is actually not canonically determined. 

%Given the reduced model $\Gamma=\bigl\langle \XX_{red}\xrightarrow{f}\cdots\rightarrow \XX \bigl\rangle\in Hom(\XX_{red},\XX)$, there is also a {\bf toric model} $\Gamma'=\bigl\langle\mathbb{X}_{tor}^1\rightarrow\cdots\rightarrow \XX_{tor}^m\bigl\rangle\in Hom(\XX_{tor}^1,\XX_{tor}^m)$ where $\XX_{tor}^1$ is the boundary divisor of a symplectic toric Hirzebruch surface and each arrow denotes a toric blowup. It will depend on the pattern $f$ in all cases except for the last case, which gives the position of the exceptional curve of class $E_2$. 

We now carefully choose $\XX_{\text{tor}}'$, the boundary divisor of some symplectic toric Hirzebruch surface,  with a sequence of toric blowups $t_1,\cdots,t_s$ as the toric minimal reduction for $\XX_{\text{tor}}$. This process will also be canonical. Recall that all symplectic toric Hirzebruch surfaces modulo equivariant symplectomorphisms are classified by their moment polygons modulo $\text{AGL}(2,\ZZ)$-action. These polygons are all trapezoids whose edges have affine lengths $x,y,x,z$ (in a cyclic order), satisfying the relation $y-z=kx$ for some $k\in\ZZ$ (will be simply called a {\bf $k$-trapezoid} later). Therefore, the moduli can be described by \[\{(k,x,y,z)\,|\,y-z=kx\}\subseteq \ZZ\times \RR_+^3\] modulo the equivalence relations \[(k,x,y,z)\sim (-k,x,z,y)\,\,\,\text{ and }\,\,\,(0,x,y,y)\sim(0,y,x,x).\] 

By the correspondence between toric actions and toric LCY pairs (\cite[Theorem 1.5]{Enumerate}), the pair $\XX_{\text{tor}}'$ can be defined by specifying a $k$-trapezoid,  represented by the quadruple $(k,x,y,z)$ as follows:
\begin{enumerate}[label=(\roman*)]
\item $\begin{cases}
      (1,1-\dd_2,2-\dd_2-\dd_i-\dd_j,1-\dd_i-\dd_j) , & f_1 \text{ is non-toric blowup on }2H; \\
     (2,1-\dd_i,2-\dd_i-\dd_j,\dd_i-\dd_j), & \text{otherwise}
    \end{cases}$
    
	%\item (a) if $f$ is not non-toric blowup on the component $2H$, then $k=2,x=1-\alpha,y=2-\alpha-\beta,z=\alpha-\beta$;

 %(b) otherwise, $k=1,x=1-\dd_2,y=2-\dd_2-\alpha-\beta,z=1-\alpha-\beta$;

 \item $\begin{cases}
      (2a-2,1-\dd_1,1+a-a\dd_1-\dd_2-\dd_i-\dd_j,3-a+(a-2)\dd_1-\dd_2-\dd_i-\dd_j) , \\ f_1 \text{ is non-toric blowup on }(a+1)H-aE_1; \\
     (2a-1,1-\dd_1,a+1-a\dd_1-\dd_i-\dd_j,-a+2+(a-1)\dd_1-\dd_i-\dd_j),\text{otherwise}
    \end{cases}$
    
   % \item (a) if $f$ is not non-toric blowup on the component $(a+1)H-aE_1$, then $k=2a-1,x=1-\dd_1,y=a+1-a\dd_1-\alpha-\beta,z=-a+2+(a-1)\dd_1-\alpha-\beta$;

%(b) otherwise, $k=2a-2,x=1-\dd_1,y=a+1-a\dd_1-\dd_2-\alpha-\beta,z=-a+3+(a-2)\dd_1-\dd_2-\alpha-\beta$;

     \item $\begin{cases}
      (2a-3,1-\dd_1,a-(a-1)\dd_1-\dd_2-\dd_i,3-a+(a-2)\dd_1-\dd_2-\dd_i) , \\ f_1 \text{ is either non-toric blowup on }aH+(-a+1)E_1 \text{ or}\\
      \text{ toric blowup at the node between } aH+(-a+1)E_1 \text{ and } H-E_1; \\
     (2a-2,1-\dd_1,a-(a-1)\dd_1-\dd_i,-a+2+(a-1)\dd_1-\dd_i),\text{ otherwise}
    \end{cases}$
    
	%\item (a) if either $a=1$, or $f$ is neither the toric blowup at the node between $aH+(-a+1)E_1$ and $H-E_1$ nor non-toric blowup on the component $aH+(-a+1)E_1$, then $k=2a-2,x=1-\dd_1,y=a-(a-1)\dd_1-\alpha,z=-a+2+(a-1)\dd_1-\alpha$;

%(b) otherwise, $k=2a-3,x=1-\dd_1,y=a-(a-1)\dd_1-\dd_2-\alpha,z=-a+3+(a-2)\dd_1-\dd_2-\alpha$;

 \item $\begin{cases}
      (2a-2,1-\dd_1,a-(a-1)\dd_1-\dd_2,2-a+(a-1)\dd_1-\dd_2) ,\\ f_1 \text{ is either non-toric blowup on }aH+(-a+1)E_1 \text{ or}\\
      \text{ toric blowup at the node between } aH+(-a+1)E_1 \text{ and } H-E_1; \\
     (2a-1,1-\dd_1,a-(a-1)\dd_1,1-a+a\dd_1),\text{ otherwise}
    \end{cases}$

%	\item (a) if $E_2$ doesn't appear as the toric blowup at the node between $aH+(-a+1)E_1$ and $H-E_1$, then $k=2a-1,x=1-\dd_1,y=a-(a-1)\dd_1,z=-a+1+a\dd_1$;

% (b) otherwise, $k=2a-2,x=1-\dd_1, y=a-(a-1)\dd_1-\dd_2,z=-a+2+(a-1)\dd_1-\dd_2$;

    \item $(1,1-\dd_l,3-\dd_1-\dd_2-2\dd_l-\dd_i-\dd_j,2-\dd_1-\dd_2-\dd_l-\dd_i-\dd_j)$
    
    %\item $k=1,x=1-\dd_l,y=3-\dd_1-\dd_2-2\dd_l-\alpha-\beta,z=2-\dd_1-\dd_2-\dd_l-\alpha-\beta$.
\end{enumerate}

The sizes $x,y,z$ above, which may appear mysterious at first glance, actually correspond to the `initial' areas of the four (colored black, red and blue in Figures \ref{fig:curve1}, \ref{fig:curve2}, \ref{fig:curve3}, \ref{fig:curve4} and \ref{fig:curve5}) curves in the pattern $\Gamma_\varepsilon\circ\Gamma$, plus the areas of the non-toric exceptional classes  in the chosen set $\mathcal{E}$ that have non-trivial pairing with these four curve classes. Here, `initial' refers to the area of each curve prior to any blowup in $\Gamma_\varepsilon\circ\Gamma$ applied to it. For the black curves, this refers to the areas of the corresponding curves in $\XX_{\text{red}}$; for red or blue ones, it simply refers to $\dd_i$ or $\dd_j$.  See Figures \ref{fig:toric1}, \ref{fig:toric2}, \ref{fig:toric3}, \ref{fig:toric4} and  \ref{fig:toric5}, where the green dashed curves denotes the non-toric exceptional spheres corresponding to the set  $\mathcal{E}\setminus\{E_{k+1}\,|\,k\in \mathcal{N}\}$, and the purple dashed curves denotes the remaining non-toric exceptional spheres we choose to blow down in order to obtain $\XX_{\text{tor}}$.

  \begin{figure}[]
     \centering
		\includegraphics[width=8cm]{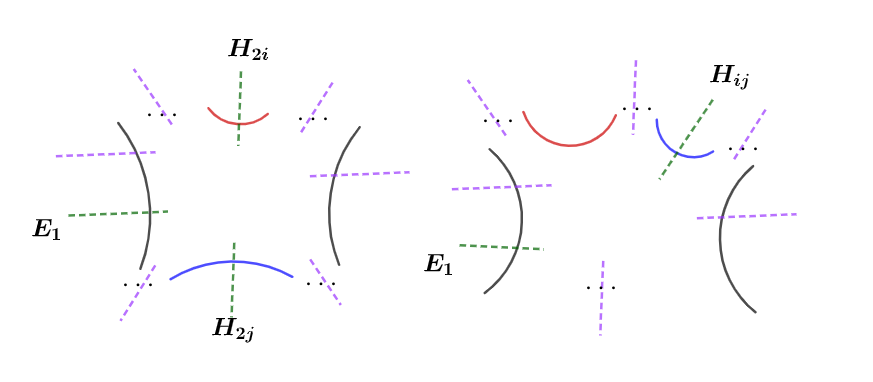}
  \caption{Case (\rn{1}): These two diagrams correspond to whether $f_1$ is the non-toric blowup at the $2H$-component. In both diagrams, the left and right black curves denote the proper transforms of the $H_1$- and $2H$-components in the reduced model, respectively. }\label{fig:toric1}
	\end{figure}

      \begin{figure}[]
     \centering
		\includegraphics[width=7cm]{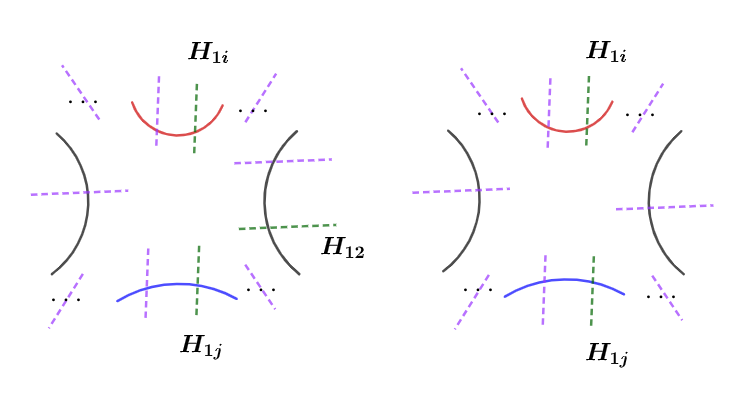}
  \caption{Case (\rn{2}): These two diagrams correspond to whether $f_1$ is the non-toric blowup at the $(a+1)H-aE_1$-component. In both diagrams, the left and right black curves denote the proper transforms of the $(a+1)H-aE_1$- and $(-a+2)H+(a-1)E_1$-components in the reduced model, respectively. }\label{fig:toric2}
	\end{figure}

      \begin{figure}[]
     \centering
		\includegraphics[width=7cm]{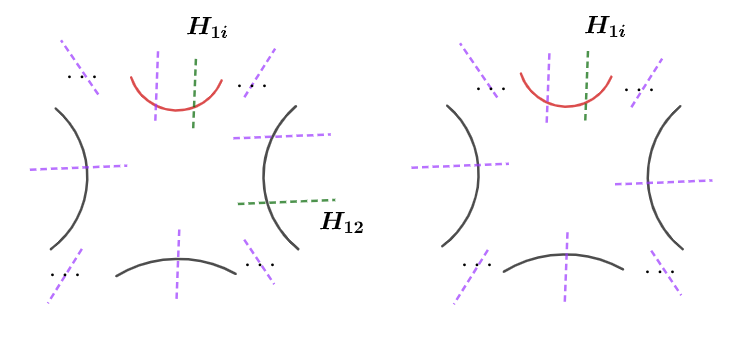}
  \caption{Case (\rn{3}): These two diagrams correspond to whether $f_1$ is the non-toric blowup at the $aH+(-a+1)E_1$-component. In both diagrams, the left, right and bottom black curves denote the proper transforms of the $aH+(-a+1)E_1$- ,$(-a+2)H+(a-1)E_1$- and $H_1$-components in the reduced model, respectively. }\label{fig:toric3}
	\end{figure}

      \begin{figure}[]
     \centering
		\includegraphics[width=7cm]{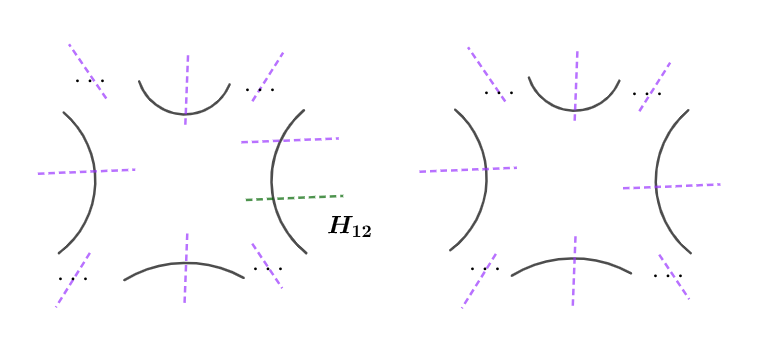}
  \caption{Case (\rn{4}): These two diagrams correspond to whether $f_1$ is the non-toric blowup at the $aH+(-a+1)E_1$-component. In both diagrams, the left, right, top and bottom black curves denote the proper transforms of the $aH+(-a+1)E_1$- ,$(-a+1)H+aE_1$-, $H_1$- and $H_1$-components in the reduced model, respectively. }\label{fig:toric4}
	\end{figure}

     \begin{figure}[]
     \centering
		\includegraphics[width=5cm]{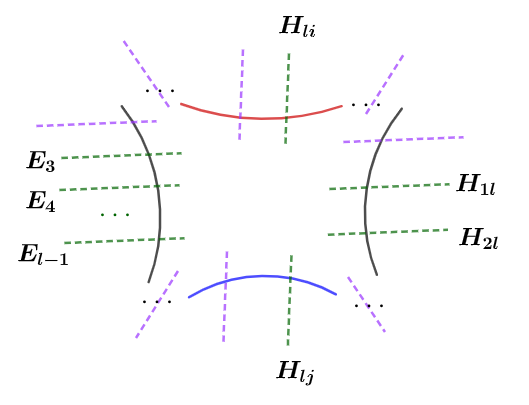}
  \caption{Case (\rn{5}): The left and right black curves denote the proper transforms of the $3H-E_1-\cdots-E_{l-1}-2E_l$- and $2E_l$-components in the reduced model, respectively.}\label{fig:toric5}
	\end{figure}

With this in mind, we can use the toric blowups $f_k$ for $k\in\mathcal{T}\setminus\{i,j\}$ appearing in $\Gamma_\varepsilon\circ\Gamma$ to build our desired $\Gamma_t\in\text{Hom}(\XX_{\text{tor}}',\XX_{\text{tor}})$ as follows. If $\mathcal{T}\setminus\{i,j\}$ is empty, then by our choice of $\mathcal{E}$ and the definition of the indices $i$ and $j$, $\XX_{\text{tor}}$ must have four components and we can simply take $\XX_{\text{tor}}':=\XX_{\text{tor}}$ and let $\Gamma_t$ be the identity. Otherwise, let $p:=\max\{\mathcal{T}\setminus\{i,j\}\}$. Note that $\mathcal{E}$ contains all the non-toric exceptional classes with non-trivial pairing with $E_p$. Therefore, $\XX_{\text{tor}}$ must contain a component in class $E_p$ on which we can perform toric blowdown. Then, we can continue the procedure of taking $q:=\max\{\mathcal{T}\setminus\{i,j,p\}\}$ and performing toric blowdown on the $E_q$-component. Repeat this procedure until all indices in $\mathcal{T}\setminus\{i,j,1\}$ have been exhausted. Then, for the same reason as in Remark \ref{rmk:f1}, we again have to discuss two cases determined by $f_1$:
\begin{itemize}
    \item when $f_1$ is the toric blowup at the node between $aH+(-a+1)E_1$ and $H-E_1$ in case (\rn{3}) or (\rn{4}), we perform toric blowdown on the $H_{12}$-component\footnote{The reason why we make such a choice will be clear in the proof of the main Theorem \ref{thm:main}, where we need to check the triangle packing problem is solvable.};
    \item otherwise, we perform toric blowdown on $E_2$-component if $1\in\mathcal{T}$.
\end{itemize} A direct computation of the area and self-intersection sequences\footnote{In fact, this is exactly how we choose $(k ,x,y,z)$ in the definition of $\XX_{\text{tor}}'$.} shows that, after performing all the toric blowdowns according to the procedure outlined above, the resulting pair is isomorphic to $\XX_{\text{tor}}'$, by the tautness of toric pairs (Lemma \ref{lem:torictaut}). Unlike the non-toric blowups $n_k$'s, the toric blowup sequence $\Gamma_t:\XX_{\text{tor}}'\xrightarrow{t_1}\cdots\xrightarrow{t_s}\XX_{\text{tor}}$ is a canonically assigned element in $\text{Hom}(\XX_{\text{tor}}',\XX_{\text{tor}})$ since each $t_k$ is completely determined by the blowup pattern $\Gamma_\varepsilon\circ\Gamma$ without any additional choice.

\hfill\break
We conclude this section with a summary of the main constructions introduced in Section \ref{section:lcy} for future reference. Given a framing $\mathfrak{f}$, any $\XX\in\mathcal{LCY}_{\geq2}$ has 
\begin{itemize}
    \item a canonical symplectic reduced model $\XX_{\text{red}}$ with a uniquely determined $\Gamma\in \text{Hom}(\XX_{\text{red}},\XX):  \XX_{\text{red}}\xrightarrow{}\cdots\rightarrow \XX $;
    \item  a canonical $\varepsilon$-replacement $\XX_\varepsilon$ with some non-uniquely determined $\Gamma_\varepsilon\in \text{Hom}(\XX,\XX_\varepsilon):\XX\rightarrow\cdots\rightarrow\XX_\varepsilon$; 
    \item a canonical symplectic toric model $\XX_{\text{tor}}$ with some non-uniquely determined $\Gamma_n\in \text{Hom}(\XX_{\text{tor}},\XX_{\varepsilon}): \mathbb{X}_{\text{tor}}\xrightarrow{}\cdots\rightarrow \XX_{\varepsilon} $;
    \item a canonical toric Hirzebruch surface pair $\XX_{\text{tor}}'$ with a uniquely determined $\Gamma_t\in \text{Hom}(\XX_{\text{tor}}',\XX_{\text{tor}}):\mathbb{X}_{\text{tor}}'\rightarrow\cdots\rightarrow \XX_{\text{tor}}$.
\end{itemize}

\section{Base diagram set $\mathbb{BD}$}\label{section:bd}

The aim of this section is to introduce a set $\mathbb{BD}$\footnote{$\mathbb{BD}$ could be interpreted as the initials for either `base diagram' or `bitten Delzant'.} consisting of certain diagrams that can be drawn in $\RR^2$. Each such diagram naturally gives rise to a nodal integral affine structure on the disk, which in turn serves as an almost toric base for a symplectic rational manifold. As mentioned in the introduction, a complete classification of nodal integral affine structures is out of reach. However, for our purpose of realizing all LCY pairs as the boundary divisor of some ATF, the elements in this set $\mathbb{BD}$ are already sufficient.

\subsection{Bitten Delzant polygons}\label{sec:bitten}
Let us start with some standard notions in integral affine geometry.
\begin{definition}\label{def:affine}
A nonzero vector $\overrightarrow{p}=(a,b)\in \ZZ^2$ is called {\bf primitive} if $gcd(a,b)=1$. Given a segment $AB\subseteq\RR^2$ with rational slope, there exists a unique primitive vector $\overrightarrow{p}$ such that $\overrightarrow{AB}=c\overrightarrow{p}$ with $c\in \RR_+$. We call such $c$ the {\bf affine length} of $AB$, denoted by $l(AB)$. If $O$ is a point in $\RR^2$, the {\bf affine distance} from $O$ to $AB$ is defined by choosing a smooth curve $\gamma:[0,1]\rightarrow \RR^2$ with $\gamma(0)=O,\gamma(1)\in AB$ and setting $$d(O,AB)=\int_{0}^1 | \overrightarrow{p}\times \gamma'(t)|\,dt$$
\end{definition}

It is straightforward to verify that the affine distance $d(O,AB)$ is independent of the choice of $\gamma$, since it can alternatively be defined as the magnitude of the inner product between $\overrightarrow{OA}$ and a primitive integral vector perpendicular to $AB$. In the context of almost toric base diagrams, it corresponds to the symplectic area of a visible symplectic surface lying over $\gamma$ (see \cite[Proposition 7.8]{symington}).

\begin{definition}
    For a Delzant polygon $P\subseteq \RR^2$, a {\bf regular Symington triangle} is an embedded triangle in $P$ such that one of its edge lies along the interior of an edge $AB$ of $P$ with affine length $d$, and the vertex opposite this edge has affine distance $d$ to $AB$.
\end{definition}
 Here, `regular' emphasizes that the size $d$ is strictly less than the affine length $l(AB)$ of the corresponding edge in $P$.

\begin{definition}
   A {\bf regular bitten Delzant polygon} is the datum of a Delzant polygon $P$ together with a finite collection of disjoint regular Symington triangles embedded in $P$.
\end{definition}

Recall that any Delzant polygon can be obtained from a $k$-trapezoid $P_0$ via a sequence of corner choppings. Given a regular bitten Delzant polygon, we may label all Symington triangles as well as the triangles in the corner chopping procedure. By ATF visible surface technique, one can read off the symplectic class from these labels. See Figure \ref{fig:delzant} for an example.

 \begin{figure}[h]
		\includegraphics*[width=\linewidth]{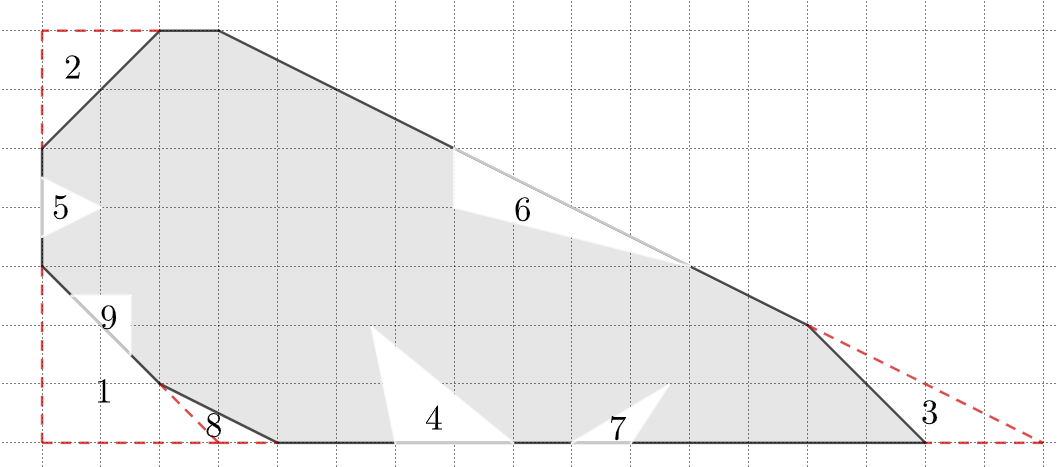}
 
		\caption{An example of a labeled regular bitten Delzant polygon. $P_0$ is a $2$-trapezoid and $P$ is obtained by performing four corner chopping. It is the almost toric base diagram of a symplectic rational manifold $(S^2\times S^2\# 9\overline{\CC\PP}^2,\omega)$. From the labels, we have a basis $\{b,f,e_1,\cdots,e_9\}$ in $H_2(S^2\times S^2\# 9\overline{\CC\PP}^2;\ZZ)$ with symplectic areas given by $\omega(f)=7,\omega(b)=10,\omega(e_1)=3,\omega(e_2)=\omega(e_3)=\omega(e_4)=\omega(e_6)=2,\omega(e_5)=\omega(e_7)=\omega(e_8)=\omega(e_9)=1$.\label{fig:delzant}}
	\end{figure}

Regular bitten Delzant polygons are insufficient to represent all LCY pairs. We also have to consider an enlarged class of almost toric bases by introducing two additional types of diagram. Let us begin with the following observations.

\begin{enumerate}[label=(\Roman*)]

    \item Note that a $k$-trapezoid has two opposite edges $X,X'$\footnote{Unfortunately, this notation coincides with the one we have used for the symplectic manifold. However the meaning should be clear in context.}, of equal affine length, corresponding to the $S^2$-fibers in the Hirzebruch surface. We now allow `{\bf full bites}' on $X$ or $X'$: that is, the sizes of the Symington triangles lying on these two edges are permitted to match the affine lengths of $X,X'$. The process of a full bite can be interpreted as filling in a corner of a regular bitten Delzant polygon, arising from an $\varepsilon${\bf -shift} of the edges $X,X'$ (see Figure \ref{fig:bd1}). By making a branch move as in Figure \ref{fig:examplefullbite}, one can readily see that this operation corresponds to a toric blowdown. Therefore, such a diagram with full bites still serves as an almost toric base.

%    Furthermore when its $\varepsilon$-small shift comes with a labelling, we can also give it a canonical framing. Assume the labels on the full bites on $X,X'$ are $i,j$. If $k$ is odd, in the case of only one full bite, we contract $H_{1i}$ so that the basis after blowdown can be chosen as $f=H_1,b=H_i,\{e_k\}=\{E_k\}_{k\neq i}$; in the case of two bites, $H_{1i},H_{1j}$ are contracted so that we can choose $f=H_1,b=2H-E_1-E_i-E_j,\{e_k\}=\{E_k\}_{k\neq i,j}$. If $k$ is even, 

    \item We can also allow the $2$-trapezoid to degenerate in the sense that its shortest edge has length $0$ (so that it becomes a triangle) with a single Symington triangle lying on the edge $X$ whose size equals the length of that edge. We can use the trick of a branch move to rotate around the node, allowing us to treat the situation similarly to Case (\upperRomannumeral{1}) by `filling up two small corners' of a regular bitten Delzant polygon. This process corresponds to performing two toric blowdowns of sizes $\varepsilon,\frac{\varepsilon}{2}$ respectively. Therefore, such a degenerate diagram with a full bite also serves as an almost toric base. We will call the regular bitten Delzant polygon an $(\varepsilon,\frac{\varepsilon}{2})${\bf -shift} of the original degenerate one. See Figure \ref{fig:bd2}.
\begin{figure}
		\includegraphics*[width=\linewidth]{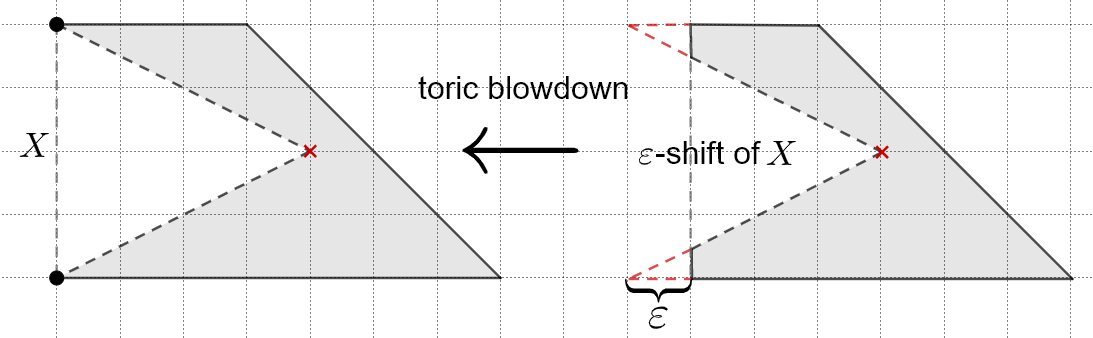}
 
		\caption{The right almost toric base diagram is a regular bitten Delzant polygon, obtained by an $\varepsilon$-shift of the edge $X$ from the left diagram. Consequently, the left diagram can be interpreted as the almost toric base diagram of the toric blowdown of the symplectic manifold represented on the right. The manifold on the right is $(\CC\PP^2\#2\overline{\CC\PP}^2,\omega')$ with homology basis $\{H,E_1,E_2\}$ and symplectic areas $\omega'(H)=6,\omega'(E_1)=2,\omega'(E_2)=3$; by blowing down the toric exceptional sphere in class $H_{12}$, the manifold on the left will be $(S^2\times S^2,\omega)$ with homology basis $\{b,f\}$ and symplectic areas $\omega(b)=3,\omega(f)=4$. Note that the two marked vertices in the left  diagram actually correspond to the same point—a rank $0$ elliptic singularity—in the ATF base disk.\label{fig:bd1}}
	\end{figure}
    
     \begin{figure}
		\includegraphics*[width=\linewidth]{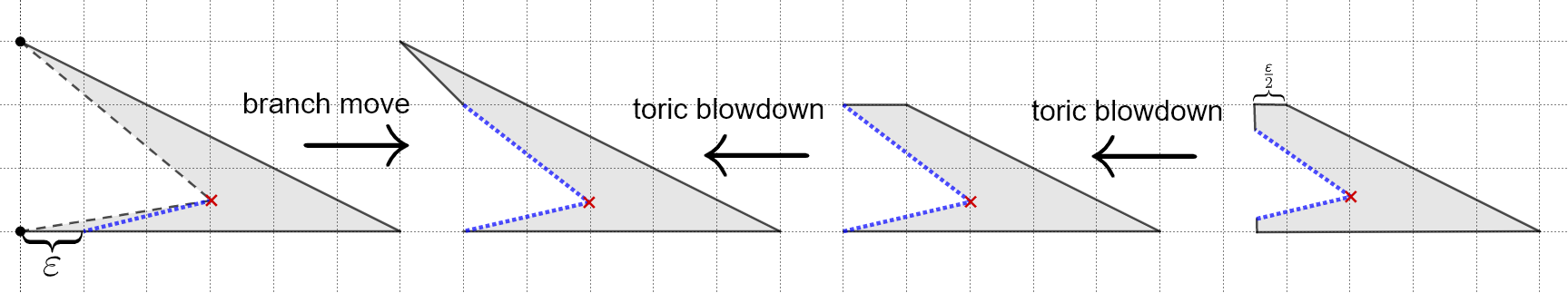}
 
		\caption{The leftmost diagram is a degenerate $2$-trapezoid with a full bite. By choosing a different branch cut (shown in blue), it can be interpreted as resulting from two toric blowdowns of sizes $\varepsilon,\frac{\varepsilon}{2}$ applied to the diagram of its $(\varepsilon,\frac{\varepsilon}{2})$-shift shown as the rightmost diagram. In this specific example, the manifold on the left is $(\CC\PP^2,\omega)$ with homology basis $\{H\}$ and symplectic area $\omega(H)=3$. It is obtained by first blowing down exceptional sphere of class $f-e$ in the manifold on the right $(S^2\times S^2\#\overline{\CC\PP}^2,\omega'')$ with homology basis $\{f,b,e\}$ and symplectic areas $\omega''(f)=3-\varepsilon,\omega''(b)=3-\frac{\varepsilon}{2},\omega''(e)=3-\frac{3\varepsilon}{2}$ ($\varepsilon$ is taken to be $1$ in this diagram), resulting the manifold $(\CC\PP^2\#\overline{\CC\PP}^2,\omega')$ with homology basis $\{H,E\}$ and symplectic areas $\omega'(H)=3,\omega'(E)=\frac{\varepsilon}{2}$; and then blowing down the exceptional sphere of class $E$.  As before, the marked vertices in the leftmost diagram represent the same point—a rank 0 elliptic singularity—in the ATF base disk.\label{fig:bd2}}
	\end{figure}
\end{enumerate}
%The morphisms are sequences of $AGL(2,\ZZ)$-transformations, rearrangements of surgery triangles lying on the same edge, adding one corner chopping or surgery triangle modulo compositions of the first two operations. 
%\todo{need a figure here}

Motivated by the above observations, we now introduce the following definitions. First, recall that any edge of a Delzant polygon has a neighborhood affine equivalent to
\[T_{u,v,w}:=\{(x,y)\in\RR^2\,|\,0\leq x\leq u-wy, 0\leq y\leq v\},\]
where $w\in \ZZ$ is the self-intersection of the symplectic sphere lying over the edge.

\begin{definition}
A {\bf special polygon} is a closed convex polygon in $\RR^2$ such that 
\begin{itemize}
    \item there is exactly one vertex whose neighborhood is spanned by $\begin{pmatrix}
        2\\1
    \end{pmatrix}$ and 
    $\begin{pmatrix}
        0\\1
    \end{pmatrix}$ up to some affine transformation, and all other vertices are Delzant;
    \item there is one edge whose neighborhood is affine equivalent to $T_{u,v,\frac{1}{2}}$.
\end{itemize} For convenience, we also refer to such an edge as having self-intersection $\frac{1}{2}$.
\end{definition}

\begin{definition}
     A {\bf special Symington triangle} is an embedded triangle in either a Delzant or special polygon $P$ such that one of its edge coincides with an edge $AB$ of $P$ with self-intersection $0$ when $P$ is Delzant or $\frac{1}{2}$ when $P$ is special, and the vertex opposite this edge has affine distance $l(AB)$ to $AB$. A {\bf Symington triangle} refers to either a regular or a special Symington triangle.
         
\end{definition}

\begin{definition}\label{def:bittendelzant}
    A {\bf special bitten Delzant polygon} consists of either a Delzant or special polygon $P$ together with a finite collection of disjoint regular and at least one speical Symington triangles embedded in $P$. A {\bf bitten Delzant polygon} refers to either a regular or a special bitten Delzant polygon.
\end{definition}

\begin{rmk}
    Note that in the above definition, we require all Symington triangles to be disjoint. In particular, this means that two full bites can not occur on two consecutive edges. By the observations discussed above, any bitten Delzant polygon naturally corresponds to a nodal integral affine disk, which serves as the almost toric base diagram of a certain symplectic rational surface. Let us also point out that full bites can actually be generalized in the following ways:
\begin{itemize}
    \item  we can allow two Symington triangles lying on an edge of self-intersection $1$ with sum of their sizes equal to that of the edge;
    \item if two consecutive edges have self-intersection $0$ and $-1$ respectively, we can allow a special Symington triangle on each of them.
\end{itemize}
See Figure \ref{fig:bd3} for why they also serve as almost toric base diagrams. In fact, the above items can be further generalized by allowing $w$ Symington triangles over an edge of self-intersection $w-1$, or considering chains of edges satisfying certain self-intersection conditions, each carrying a special Symington triangle. However, for the purposes of this paper—namely, realizing all LCY pairs—such generalizations are unnecessary, as Definition \ref{def:bittendelzant} is already sufficient. 
\end{rmk}
 \begin{figure}[h]
		\includegraphics*[width=\linewidth]{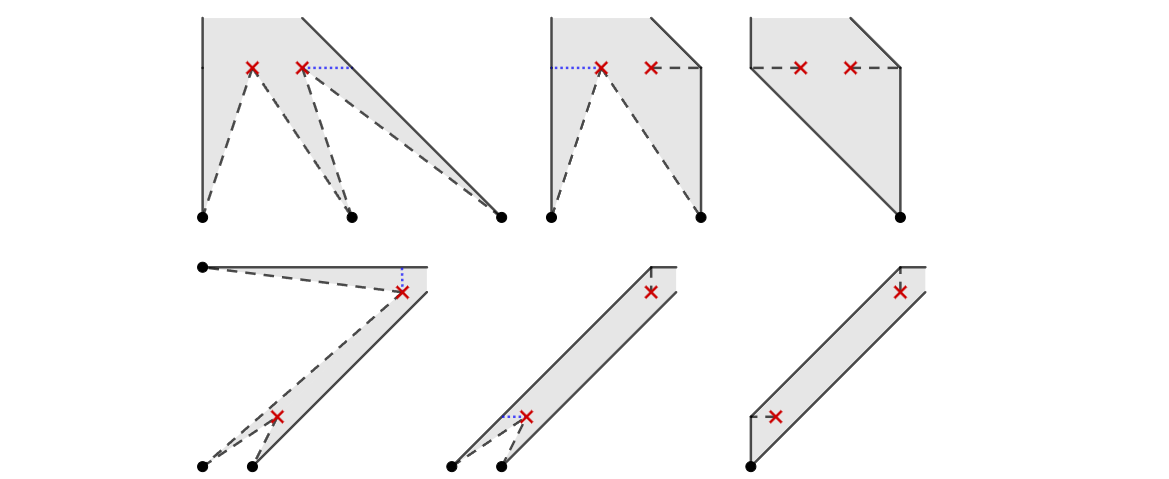}
		\caption{The top diagrams display two Symington triangles situated along an edge of self-intersection $1$. The bottom diagrams display two Symington triangles placed on consecutive edges of self-intersection $0$ and $-1$. By applying the trick of branch moves illustrated in the diagrams, one can see that they serve as valid almost toric bases for smooth manifolds.  \label{fig:bd3}}
	\end{figure}

\subsection{$\mathbb{BD}$ as almost toric base diagrams}

The equivalence relation of bitten Delzant polygons is defined as follows.

\begin{definition}
    Let $P$ be a bitten Delzant polygon with disjoint embeddings of Symington triangles $i=\bigsqcup i_\alpha:\bigsqcup T_\alpha\hookrightarrow P$. A a {\bf rearrangement of Symington triangles} is a combination of the following operations.
    \begin{itemize}
        \item (Sliding) For a smooth isotopy of embeddings of Symington triangle $i_{\alpha}(t): T_\alpha\hookrightarrow P$ such that $i_\alpha(0)=i_\alpha$ and $i_\alpha(1)$ is disjoint from all other Symington triangles, then we can replace $i_\alpha$ by $i_\alpha(1)$.
        \item (Swapping) For two regular Symington triangles $T_\alpha,T_\beta$ lying on the same edge $AB\subseteq P$, we can replace $i_\alpha\bigsqcup i_\beta$ by another regular Symington triangle embedding $i'_\alpha\bigsqcup i'_\beta$ based on the edge $AB$ and disjoint from all other Symington triangles, which interchanges their positions.
    \end{itemize}
\end{definition}
Intuitively, just as each Delzant polygon uniquely determines a symplectic toric manifold up to (equivariant) symplectomorphisms, a bitten Delzant polygon equipped with labels on all chopped corners and Symington triangles can be associated to an almost toric fibration of a symplectic rational manifold together with a framing. Rearrangement of the Symington triangles does not alter the symplectomorphism type of the resulting manifold. This will be made more precise in the following Definition \ref{def:BD} and Proposition \ref{prop:bdtolcy}. 

\begin{definition}\label{def:BD}
      Two bitten Delzant polygons are said to be equivalent if they differ only by rearrangement of Symington triangles and an $\text{AGL}(2,\ZZ)$-transformation. We define $\mathbb{BD}$ to be the set of all equivalence classes of bitten Delzant polygons. 
\end{definition}

  As what we did in section \ref{section:lcy} for $\mathbb{LCY}$, the set $\mathbb{BD}$ can be similarly endowed with a quiver structure by assigning an arrow between two elements of $\mathbb{BD}$ whenever there exist representatives that differ by either chopping a corner or adding a regular Symington triangle. Now, let us carefully explain the relation between $\mathbb{BD}$ and $\mathbb{LCY}$ through almost toric fibrations. 
  %Also adding one corner chopping or surgery triangle will naturally give a morphism in $\mathcal{ATF}$. Therefore we get a realization functor $\mathbf{R}:\mathcal{BD}\rightarrow\mathcal{ATF}$, which is unique up to natural isomorphisms.

\begin{prop}\label{prop:bdtolcy}
There is a canonical morphism between quivers $\mathbf{B}:\mathbb{BD}\rightarrow \mathbb{LCY}$ by taking the boundary divisor.
 \end{prop}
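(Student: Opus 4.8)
The goal is to construct a map $\mathbf{B}$ that sends each base diagram in $\mathbb{BD}$ to an isomorphism class of symplectic log Calabi-Yau pairs, and to check that this assignment respects the quiver structure (i.e.\ sends arrows to arrows). The overall strategy is to first handle a single bitten Delzant polygon by building the associated almost toric fibered manifold together with its boundary divisor, then to verify that the two extra types (\upperRomannumeral{1}) and (\upperRomannumeral{2}) fit into the same framework via their $\varepsilon$-shifts, then to show that equivalence by rearrangement of surgery triangles lands in the same isomorphism class, and finally to check functoriality on arrows. I will rely throughout on Symington's dictionary between corner choppings / nodal surgeries and symplectic (toric and non-toric) blowups, and on the Symplectic Torelli theorem (Proposition \ref{prop:Torelli}) to certify that the construction is well-defined up to isomorphism.

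\textbf{Construction on a single bitten Delzant polygon.} First I would start from the trapezoid $P_0$, which is the moment polygon of a symplectic toric Hirzebruch surface; its boundary divisor is a cycle of four symplectic spheres whose self-intersections and areas are read off from the affine lengths and the integer $k$, and it is tautologically an element of $\mathbb{LCY}$ of charge $0$. Each corner chopping taking $P_0$ to $P_1$ is a toric blowup, which adds an exceptional sphere to the boundary cycle and subdivides an edge; at the level of homology this records a class $E_i$ and a new vertex of the divisor, and by Symington Proposition 8.2 the union of fibers over the $0$- and $1$-strata is again a symplectic LCY divisor. Each surgery triangle of size $d$ lying on an edge is a nodal trade followed by a nodal slide, i.e.\ a non-toric blowup of size $d$ on the corresponding boundary component; this introduces a focus-focus singularity in the interior and reduces the area of that component by $d$ without changing the length of the cycle. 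Carrying out all corner choppings and all surgery triangles produces an almost toric fibration whose boundary divisor $D$ is the union of fibers over the boundary $0$- and $1$-strata; I would read off its homology classes, self-intersections, and areas directly from the labelled diagram, assigning the framing $\{H,E_1,\dots\}$ (or $\{b,f,e_1,\dots\}$) indicated by the labels, and define $\mathbf{B}$ of the diagram to be $[X,\omega,D]$.

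\textbf{Extra types and well-definedness.} For the extra types I would use the $\varepsilon$-shift description already given in Section \ref{extra}: a full bite on the pair of opposite edges $X,X'$ is, after an $\varepsilon$-shift, an honest bitten Delzant polygon whose image under the construction above differs from the intended manifold by one toric blowdown (type \upperRomannumeral{1}) or two toric blowdowns (type \upperRomannumeral{2}, using the branch move to choose the alternate cut as in Figure \ref{fig:bd2}); since a toric blowdown of a symplectic LCY pair is again a symplectic LCY pair, this extends $\mathbf{B}$ to all of $\mathbb{BD}$. The main well-definedness check is that equivalent diagrams — those differing by a rearrangement of surgery triangles, i.e.\ by an $AGL(2,\ZZ)$-transformation, a permutation of triangles on a common edge, or a smooth deformation through bitten Delzant polygons — have isomorphic images. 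An $AGL(2,\ZZ)$-transformation induces an integral isometry of $H_2$ preserving the divisor classes and the symplectic class, so Proposition \ref{prop:Torelli} gives an isomorphism; a smooth deformation keeps the self-intersection and area sequences constant, so again the Torelli theorem applies; permuting surgery triangles on the same edge only reorders the non-toric blowups along one component and does not change which homology classes, self-intersections, or areas appear, and the particular application of Proposition \ref{prop:Torelli} noted after its statement (blowdown at corresponding components of the same size) certifies these yield the same class.

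\textbf{Functoriality and the main obstacle.} Finally, to see $\mathbf{B}$ is a morphism of quivers I would check that each generating arrow of $\mathbb{BD}$ (chop a corner or add a surgery triangle) maps to an arrow of $\mathbb{LCY}$ (toric or non-toric blowup respectively): this is exactly the content of the single-diagram construction read incrementally, so it is essentially immediate once that construction is in place, and it then extends to paths by concatenation in the free category $\mathcal{LCY}$. I expect the main obstacle to be the well-definedness step rather than the construction: specifically, verifying that a smooth deformation through bitten Delzant polygons genuinely preserves all the discrete data (so that Proposition \ref{prop:Torelli} can be invoked) and that the branch-move reinterpretation of the degenerate type \upperRomannumeral{2} diagram in Figure \ref{fig:bd2} produces the \emph{same} isomorphism class regardless of which branch cut is chosen. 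The subtlety is that the almost toric manifold is only well-defined up to the choices hidden in nodal trades and branch cuts, so the argument must show these choices are invisible at the level of $[X,\omega,D]$; the Torelli theorem is the right tool, but one must be careful to produce, in each case, an explicit integral isometry matching the divisor componentwise and the Poincar\'e dual of $[\omega]$.
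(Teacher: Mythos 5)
Your outline follows the paper's construction in its broad strokes (Symington's results to build the almost toric manifold and its boundary divisor, $\varepsilon$-shifts plus toric blowdowns for the extra types, Torelli for well-definedness), but there is a genuine gap at exactly the point you flag as the main obstacle, and you do not supply the device the paper uses to close it. You propose to verify well-definedness under rearrangement of surgery triangles by ``reading off'' homology classes, self-intersections, and areas from the labelled diagram and then applying Proposition \ref{prop:Torelli} directly. This is circular as stated: Torelli requires an integral isometry matching the divisor \emph{componentwise} in $H_2$, but the identification of the divisor components and exceptional spheres of the almost toric manifold with classes in a fixed basis is precisely the labelling whose independence of choices (rearrangement, branch cuts, nodal-trade data) you are trying to establish. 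The paper's resolution is to sidestep any direct identification of classes in the almost toric manifold: it blows down the exceptional spheres lying over \emph{all} surgery triangles (these exist and are unique up to isotopy by Symington Theorem 7.4, with self-intersection $-1$ by adjunction), lands in a \emph{toric} pair, and then invokes tautness of toric pairs (Lemma \ref{lem:torictaut}) --- toric pairs are determined by their self-intersection and area sequences, which are visibly unchanged under rearrangement. The isomorphism is then transported back up through the blowups using the remark following Proposition \ref{prop:Torelli} (blowups of isomorphic pairs at corresponding nodes/components with equal sizes remain isomorphic). Your proposal never uses Lemma \ref{lem:torictaut}, and without it, or some substitute uniqueness statement, the ``explicit integral isometry'' you call for is not produced.

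The same omission affects your functoriality step, which you describe as ``essentially immediate.'' For the arrow given by adding a surgery triangle, the paper must show that blowing down the exceptional sphere over the \emph{new} triangle recovers the element of $\mathbb{LCY}$ assigned to the smaller diagram; this is again proved by blowing down all remaining exceptional spheres to a toric pair and applying tautness plus Torelli, not by inspection of the incremental construction. So the correct architecture is: reduce every comparison to the toric level, where Lemma \ref{lem:torictaut} provides uniqueness, then propagate upward via the corollary of Proposition \ref{prop:Torelli}. With that mechanism inserted at both places (well-definedness and the surgery-triangle arrow), your argument would align with the paper's proof.
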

 
\begin{proof}
    At the level  prior to taking the quotient by the equivalence relations in both 
 $\mathbb{BD}$ and $\mathbb{LCY}$, we have observed that each bitten Delzant polygon determines a stratified integral affine disk with nodes $(B,\mathcal{A},\mathcal{S})$ in the sense of \cite{symington}. According to Theorem 5.2 and Corollary 5.4 in \textit{loc. cit.}, one can construct an almost toric fibered symplectic four-manifold over the base $(B,\mathcal{A},\mathcal{S})$. By Proposition \ref{prop:symington}, the fibers over $0$ and $1$-strata will form a divisor whose homology class is Poincar\'e dual to the first Chern class. Thus we will obtain a symplectic log Calabi-Yau pair. 

 To upgrade this assignment into a well-defined map $\mathbf{B}:\mathbb{BD}\rightarrow \mathbb{LCY}$, we make use of the following observations. Let us first assume the base diagram is a regular bitten Delzant polygon. For any regular Symington triangle lying on some edge of the Delzant polygon, \cite[Theorem 7.4]{symington} ensures the existence of a visible embedded symplectic sphere (unique up to isotopy) lying over the curve (with tangent vectors not parallel to the edge) going from the node to the edge. This sphere meets the boundary LCY divisor in exactly one positively transverse point and therefore has self-intersection number $-1$ by the adjunction formula. Blowing down all such exceptional spheres arising from Symington triangles yields a toric LCY pair, as verified by a direct computation of the change in the number of divisor components and $b_2$. The area sequence and self-intersection sequence of this toric pair remain unchanged under any rearrangement of Symington triangles in the bitten Delzant polygon. Thus, by tautness of toric pairs (Lemma \ref{lem:torictaut}), we will obtain a well-defined toric element in $\mathbb{LCY}$. Then, by Torelli Theorem  \ref{prop:Torelli} and the followed illustration therein, the original log Calabi-Yau pair assigned to the regular bitten Delzant polygon will also represent a uniquely determined element in $\mathbb{LCY}$. 
 
 Next, suppose the base diagram is a special bitten Delzant polygon. As explained in Section \ref{sec:bitten}, we can choose sufficiently small $\varepsilon>0$ and consider its $\varepsilon$ or $(\varepsilon,\frac{\varepsilon}{2})$-shift, which yields a regular bitten Delzant polygon. By the argument in the preceding paragraph, the shift determines a well-defined element in $\mathbb{LCY}$. The original LCY pair, associated to the speical bitten Delzant polygon, is then obtained from that element by performing the toric blowdowns with sizes $\varepsilon$ or $\varepsilon,\frac{\varepsilon}{2}$. When $\varepsilon$ is small enough, the exceptional spheres to be blown down are uniquely identifiable among the divisor components by their symplectic areas, which enables us to match the divisor components after blowdown for different representatives of a class in $\mathbb{BD}$. Therefore, we will still get a uniquely determined element in $\mathbb{LCY}$ again by Torelli Theorem \ref{prop:Torelli}.
    
    To further upgrade this map into a morphism between quivers, first observe that the arrow corresponding to chopping a corner in $\mathbb{BD}$ naturally corresponds to the arrow representing a toric blowup in $\mathbb{LCY}$. To establish the correspondence between adding a regular Symington triangle and a non-toric blowup, we blow down the exceptional sphere associated to the newly added Symington triangle. It remains to show that the pair obtained after the non-toric blowdown represents the same element in $\mathbb{LCY}$ as the one associated to the base diagram before adding the regular Symington triangle. To see this, one can run a similar argument in the previous two paragraphs: for regular (resp. special) bitten Delzant polygons, by further blowing down the exceptional spheres associated to all the other regular Symington triangles (resp. after the $\varepsilon$ or $(\varepsilon,\frac{\varepsilon}{2})$-shift), their resulting toric pairs will have the same area sequence and self-intersection sequence. Again, the tautness of toric pairs, together with the Torelli theorem, ensures the desired conclusion.
\end{proof}

With this understood, we can give a precise statement of our main result on ATF realizations for LCY in rational surfaces. The proof will be deferred to Section \ref{section:proof}.

\begin{theorem}\label{thm:main}
   Given a framing $\mathfrak{f}$ on $\mathbb{LCY}$, there is a canonical set-theoretic section $\mathbf{R}_{\mathfrak{f}}:\mathbb{LCY}_{\geq2}\rightarrow \mathbb{BD}$ of the map $\mathbf{B}:\mathbb{BD}\rightarrow \mathbb{LCY}$.
\end{theorem}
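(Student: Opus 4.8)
The plan is to build $\mathbf{R}_{\mathfrak{f}}$ by running the tower of canonical reductions of Section \ref{section:toricmodel} \emph{backwards}, realizing each blowup as a planar operation. Fix $\XX=[X,\omega,D]$. The framing $\mathfrak{f}$ supplies the canonical reduced model $\Gamma=\langle\XX_{red}\to\cdots\to\XX\rangle$, the $\varepsilon$-replacement $\XX_\varepsilon$, the toric model $\XX_{tor}$ with $\Gamma'=\langle\XX_{tor}\to\cdots\to\XX_\varepsilon\rangle$, and the Hirzebruch boundary $\XX_{tor}'$ with $\Gamma''=\langle\XX_{tor}'\to\cdots\to\XX_{tor}\rangle$. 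I would define $\mathbf{R}_{\mathfrak{f}}(\XX)$ as follows. Start from the $k$-trapezoid $P_0$ given by the quadruple $(k,x,y,z)$ attached to $\XX_{tor}'$; this is a genuine Delzant polygon realizing the toric pair $\XX_{tor}'$. Realize each toric blowup in $\Gamma''$ as a corner chop of the prescribed size and each non-toric blowup in $\Gamma'$ as a surgery triangle placed on the appropriate edge, arriving at a bitten Delzant polygon whose boundary divisor is $\XX_\varepsilon$. Finally, if $\XX_\varepsilon\neq\XX$ -- i.e. the distinguished blowups of sizes $\dd_i,\dd_j\in\{\varepsilon,\tfrac{\varepsilon}{2}\}$ were introduced only to make $\XX_\varepsilon$ a bitten Delzant polygon -- convert this diagram into the corresponding extra type of Section \ref{extra} by inserting full bites along the two opposite edges $X,X'$, so that the bitten Delzant polygon is exactly its $\varepsilon$- (type (I)) or $(\varepsilon,\tfrac{\varepsilon}{2})$-shift (type (II)). The resulting diagram is $\mathbf{R}_{\mathfrak{f}}(\XX)$; when $\XX_\varepsilon=\XX$ we keep the bitten Delzant polygon itself.

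I expect the main obstacle to be the \emph{geometric realizability} of this recipe: one must check that the required surgery triangles (of sizes equal to the non-toric blowup sizes) and corner chops (of sizes equal to the toric blowup sizes) can all be placed disjointly on the edges of $P_0$ without overflowing an edge or overlapping each other. This is precisely the ``triangle packing'' problem of the remark in the introduction, and it is the reason the quadruples $(k,x,y,z)$ in Section \ref{section:toricmodel} were chosen as they are: $x,y,z$ record the initial affine lengths of the four boundary curves together with the non-toric exceptional curves in $\mathcal{E}$, and must be large enough to absorb every operation of $\Gamma''$ and $\Gamma'$. I would verify this case by case along the list (i)--(v), using the elementary affine geometry prepared in Section \ref{section:elementarygeometry}. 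The delicate instances are the first cases of (iii) and (iv), where the initial toric blowup of $\Gamma'$ must be taken of size $1-\dd_1-\dd_2$ between the edges of affine length $x$ and $z$ (as flagged in the footnote there) in order for the remaining triangles to pack.

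Well-definedness and canonicity come next. The only non-canonical datum is the blowup order in $\Gamma'$; but two orders differ only by permuting and sliding the surgery triangles along the edges they occupy, which is exactly a \emph{rearrangement of surgery triangles}. Hence $\mathbf{R}_{\mathfrak{f}}(\XX)$ is a well-defined class in $\mathbb{BD}$. The conversion to an extra type when $\XX_\varepsilon\neq\XX$ is itself canonical, since the marked nodes -- and therefore the edges carrying the full bites -- are pinned down by the conventions of Section \ref{subsection:replace} (in case (i), by declaring $E_i$ to be the exceptional class of the first toric blowup in $\Gamma$). As every input -- $\XX_{red}$, $\XX_\varepsilon$, $\XX_{tor}$, $\XX_{tor}'$ and the quadruple $(k,x,y,z)$ -- is determined once $\mathfrak{f}$ is fixed, so is $\mathbf{R}_{\mathfrak{f}}$.

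It remains to verify the section identity $\mathbf{B}\circ\mathbf{R}_{\mathfrak{f}}=\mathrm{id}_{\mathbb{LCY}}$ and the compatibility with arrows. By construction $\mathbf{R}_{\mathfrak{f}}(\XX)$ is inverse to the recipe of Proposition \ref{prop:bdtolcy}: blowing down the exceptional spheres lying over all surgery triangles (and undoing the corner chops) returns a toric pair with exactly the self-intersection and area sequences of $\XX_{tor}'$, which by tautness of toric pairs (Lemma \ref{lem:torictaut}) must be $\XX_{tor}'$ itself. Proposition \ref{prop:Torelli} then propagates this isomorphism back up through $\Gamma''$, $\Gamma'$ and the $\varepsilon$- or $(\varepsilon,\tfrac{\varepsilon}{2})$-shift, yielding $\mathbf{B}(\mathbf{R}_{\mathfrak{f}}(\XX))=\XX$. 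For the quiver-morphism property it suffices to note that the same dictionary -- toric blowup $\leftrightarrow$ corner chop, non-toric blowup $\leftrightarrow$ surgery triangle -- matches each generating arrow out of $\XX$ in $\mathbb{LCY}$ with a corner chop or surgery-triangle arrow in $\mathbb{BD}$, which $\mathbf{B}$ carries back to the original arrow.
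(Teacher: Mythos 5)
Your overall route is the same as the paper's: start from the trapezoid $P_0$ of $\XX_{tor}'$, realize the toric blowups of $\Gamma''$ as corner chops and the non-toric blowups of $\Gamma'$ as surgery triangles, reduce disjoint placement to the triangle packing problem of Section \ref{section:elementarygeometry}, get well-definedness from the fact that reordering $\Gamma'$ only rearranges surgery triangles, recover $\XX$ from $\XX_\varepsilon$ via the extra-type diagrams of Section \ref{extra}, and prove $\mathbf{B}(\mathbf{R}_{\mathfrak{f}}(\XX))=\XX$ through tautness of toric pairs (Lemma \ref{lem:torictaut}) and the Torelli theorem (Proposition \ref{prop:Torelli}). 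All of this matches the paper, and you correctly flag the delicate subcases of (\lowerromannumeral{3}) and (\lowerromannumeral{4}) where $f_2$ is the toric blowup at the node between $aH+(-a+1)E_1$ and $H-E_1$ and the first chop must have size $1-\dd_1-\dd_2$.

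The genuine gap is that you never establish solvability of the packing problem; you only say you \emph{would} verify it case by case. That verification is the quantitative heart of the proof: for each of the five reduced models, split according to $f_2$, one must bound the quantities $\Theta_1$ and $\Theta_2$ of Lemma \ref{lemma:packing} by $x$ and $y$, and these estimates are precisely where the reduced-framing inequalities $\dd_1\geq\dd_2\geq\cdots$ and $1\geq\dd_1+\dd_2+\dd_3$ get consumed (e.g.\ in case (\lowerromannumeral{1}), $\Theta_1\leq\dd_1+\dd_3\leq 1-\dd_2=x$ and $\Theta_2\leq y+(\dd_3-\dd_2)\leq y$). Without these computations, your assertion that $x,y,z$ were chosen ``large enough to absorb every operation'' is circular: the quadruple $(k,x,y,z)$ is forced by the areas of the curves, and solvability is a theorem to be proved, not a design choice; one also still needs the perturbation Lemma \ref{lemma:embed} to turn a solution point into genuinely disjoint triangles. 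A secondary overclaim: your final paragraph asserts arrow-compatibility of $\mathbf{R}_{\mathfrak{f}}$, which the paper does not prove and your one-line dictionary does not deliver --- for an arrow $\XX\rightarrow\XX'$, the diagram $\mathbf{R}_{\mathfrak{f}}(\XX')$ is built from the canonical tower of $\XX'$ with its \emph{own} reduced framing and $\varepsilon$-replacement, and this need not extend $\mathbf{R}_{\mathfrak{f}}(\XX)$ by a single chop or triangle, since blowing up can reorder or even change the reduced basis. The paper only claims (and only needs) the section identity on objects.
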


%\begin{rmk}
    %It's well known the above statement is not true if we replace almost toric fibration by toric action, or Hamiltonian $S^1$-action, in particular, semi-toric fibration. The symplectic classes admitting toric or Hamiltonian $S^1$-action structures can be computed in small blowups.
%\end{rmk}

%\section{The category $\mathcal{ATF}$}
%Let $\mathcal{ATF}$ denote the category with objects being a pair of symplectic rational manifold and an almost toric fibration on it, morphisms being sequence of almost toric surgeries (symplectomorphisms preserving boundary divisors, toric/almost toric blowups). 
%\todo{need to figure out almost toric blowup!}

%There is a natural functor taking boundary divisor $\mathbf{B}:\mathcal{ATF}\rightarrow \mathcal{LCY}^{\perp}$.\todo{need to describe the categories and functor more precisely}
%\begin{theorem}
 %   $\mathbf{B}$ is fully faithful and essentially surjective (thus an equivalence of categories).
%\end{theorem}

\section{Proof of the main results}\label{section:proof}

\subsection{Elementary geometry of Delzant polygons}\label{section:elementarygeometry}

We begin by establishing some elementary geometric properties of Delzant polygons, which will serve as preparation for the proof of Theorem \ref{thm:main}. Let $P$ be a Delzant polygon in $\RR^2$ with edges $Q_1,\cdots,Q_n$. Given positive real numbers $a_1,\cdots,a_n$ satisfying $a_i<l(Q_i)$ for each $i$, we define the {\bf triangle packing problem with weights $a_1,\cdots,a_n$} as follows:
\begin{itemize}
    \item determine whether there exists an interior point $O$ in $P$ such that $d(O,Q_i)\geq a_i$ for all $i$'s.
\end{itemize} If such a point exists, we say the problem is {\bf solvable} and refer to $O$ as a {\bf solution point}. By Proposition \ref{prop:bdtolcy}, the existence of such a point corresponds to the realization of non-toric blowups of with sizes less than the specified weights.

 We now focus on a specific setting that will play a central role in our subsequent constructions. Let $P_0$ be a $k$-trapezoid whose edges $X,Y,X',Z$ have affine lengths $x,y,x,z$ respectively, where $y\geq z$. Suppose $P_1$
  is the Delzant polygon obtained from $P_0$ by performing a corner chopping procedure. Each corner chopping is determined by the choice of a vertex on the previous Delzant polygon and the size (maximal affine distance between two points in the triangle) of the chopped triangle. For the vertices $X\cap Y$, $Y\cap X'$, $X'\cap Z$ and $Z\cap X$ on the initial trapezoid, define $c_{X\cap Y}$, $c_{Y\cap X'}$, $c_{X'\cap Z}$ and $c_{Z\cap X}$ to be $0$ if the corresponding vertex is not chosen in the corner chopping procedure; or the size of the chopped triangle at the corresponding vertex. Then, we can introduce the quantities
  \[b_X:=\max\{c_{X\cap Y},c_{Z\cap X}\},b_Y:=\max\{c_{X\cap Y},c_{Y\cap X'}\},\]
  \[b_{X'}:=\max\{c_{Y\cap X'},c_{X'\cap Z}\},b_{Z}:=\max\{c_{X'\cap Z},c_{Z\cap X}\}.\]

 See Figure \ref{fig:trianglepacking} for an example. Next, denote by $X_1,Y_1,X'_1,Z_1$ the edges in $P_1$ corresponding to the portions of the original four edges $X,Y,X',Z$ in $P_0$ that remain after the corner chopping. Suppose there is a triangle packing problem on $P_1$ with weights $a_{X_1},a_{Y_1},a_{X'_1},a_{Z_1}$ on the edges $X_1,Y_1,X'_1,Z_1$. We further introduce the following quantities:
 \[\Theta_1:=\max\{a_{Y_1},b_{Y}\}+\max\{a_{Z_1},b_{Z}\},\]
 \[\Theta_2:=\max\{a_{X_1},b_{X}\}+\max\{a_{X'_1},b_{X'}\}+k\max\{a_{Y_1},b_{Y}\}.\]

 \begin{figure}[h]
		\includegraphics*[width=\linewidth]{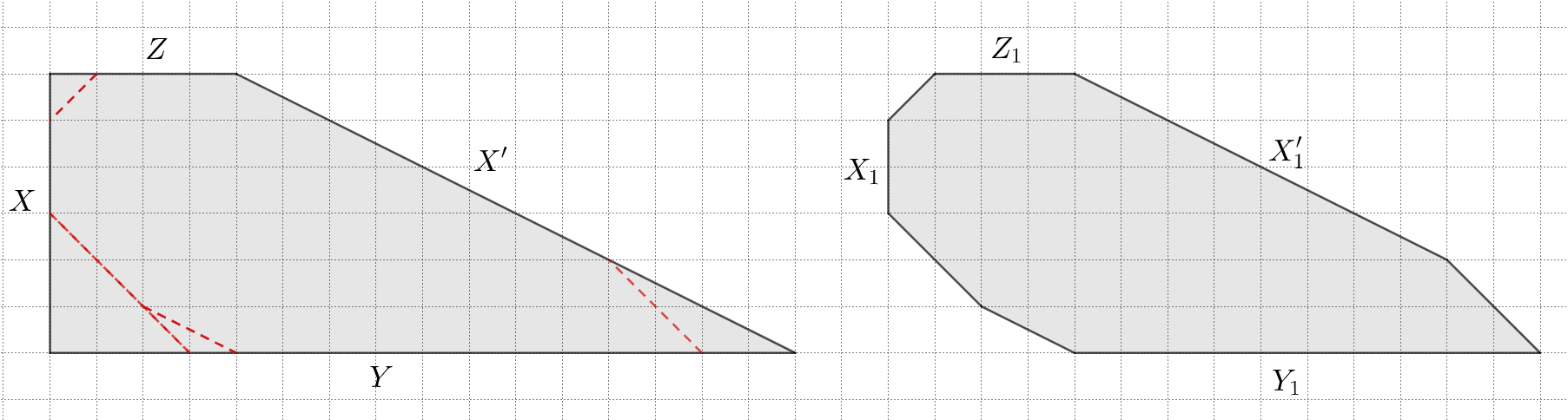}
 
		\caption{The left polygon is a $2$-trapezoid $P_0$ with $x=6,y=16,z=4$. After truncating the corners indicated by the red segments, we obtain the right polygon $P_1$. In this case, $b_X=\max\{1,3\}=3,b_Y=\max\{3,2\}=3,b_{X'}=\max\{2,0\}=2,b_{Z}=\max\{0,1\}=1$.\label{fig:trianglepacking}}
	\end{figure}

With the help of Figure \ref{fig:trianglepacking}, the geometric meaning of $\Theta_1,\Theta_2$ can be interpreted as follows, making the ansatz that the $P_1$ becomes a regular bitten Delzant polygon.
\begin{itemize}
    \item If we projective all the Symington triangles lying over the edges $Y_1,Z_1$ together with all the triangles chopped at the initial four vertices of $P_0$ onto a vertical line, then $\Theta_1$ is the affine length of their projection.
    \item The term $\max\{a_{X_1},b_{X}\}+\max\{a_{X'_1},b_{X'}\}$ in $\Theta_2$ can be interpreted analogously to  $\Theta_1$: it is the affine length of the projection of all the Symington triangles lying over the edges $X_1,X'_1$ together with all the triangles chopped at the initial four vertices of $P_0$ onto a horizontal line. The term $k\max\{a_{Y_1},b_{Y}\}$ in $\Theta_2$ can be understood as follows: first project all the Symington triangles lying on $Y_1$ and the triangles chopped at $X\cap Y,X'\cap Y$ onto the edge $X'$ along the horizontal direction, then project the portion in $X'$ onto the horizontal line along the vertical direction; finally take the affine length of this projection.
\end{itemize}

\begin{lemma}\label{lemma:packing}
    The triangle packing problem in the above setting is solvable if $\Theta_1\leq x$ and $\Theta_2\leq y$.
\end{lemma}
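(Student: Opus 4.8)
The plan is to reduce the packing problem on the bitten polygon $P_1$ to an ordinary triangle packing problem on the trapezoid $P_0$ whose weight on each of the four initial edges is inflated to $\alpha_E:=\max\{a_{E_1},b_E\}$ (for $E\in\{X,Y,X',Z\}$), and then to solve the latter by writing down an explicit candidate solution point.

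First I would fix affine coordinates adapted to $P_0$. Placing the two parallel edges $Y,Z$ at heights $0$ and $x$ and imposing the Delzant condition at the four corners, the primitive outer normals of the legs can be taken to be $(-1,s)$ and $(1,t)$ with $s+t=k$, which recovers the defining relation $y-z=kx$; in particular $y\geq z$ forces $k\geq 0$. In these coordinates the affine distances of a point $O=(O_1,O_2)$ to the four edges read $d(O,Y)=O_2$, $d(O,Z)=x-O_2$, and $d(O,X)+d(O,X')=y-kO_2$: the first two sum to $x$, and the last is controlled by the single height coordinate $O_2$. Since $X_1,Y_1,X_1',Z_1$ lie on the same lines as $X,Y,X',Z$, one has $d(O,E_1)=d(O,E)$, so the same formulas govern the constraints coming from the main edges of $P_1$.

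Second comes the reduction step. I would show that any $O\in P_0$ with $d(O,E)\geq\alpha_E$ for all four $E$ is automatically an interior solution of the packing problem on $P_1$: $\alpha_E\geq a_{E_1}$ handles the four main-edge constraints, while $\alpha_E\geq b_E$ should guarantee that $O$ lies past every chopped corner. The clean case is a single corner chop of size $b$ at $E\cap E'$, whose removed region is $\{d(\cdot,E)+d(\cdot,E')<b\}\subseteq\{d(\cdot,E)<b\}$, so $d(O,E)\geq b_E\geq b$ already excludes $O$ from it. The technical heart, and the step I expect to be the main obstacle, is iterated chopping, where a new edge $N$ may be cut off at a corner formed by two previously created edges. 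For such an $N$ near the corner $E\cap E'$ one has a normal $a_Nn_E+c_Nn_{E'}$ with $a_N,c_N\geq 1$ and the identity $d(\cdot,N)=a_Nd(\cdot,E)+c_Nd(\cdot,E')-\mu_N$, where $\mu_N$ is the total depth accumulated from the chops below $N$. I would prove by induction on the chopping sequence that the chops contributing to $\mu_N$ can be charged to $E$ or $E'$ so that $\mu_N\leq a_N\,b_E+c_N\,b_{E'}$; the affine-length constraint that each chop size is strictly smaller than the edge it cuts is what keeps this bookkeeping finite. Granting the bound, $d(O,N)\geq a_Nb_E+c_Nb_{E'}-\mu_N\geq 0$, so $O$ survives every chop and lies in $P_1$.

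Third, I would solve the inflated packing on $P_0$ itself. The constraints $d(O,Y)\geq\alpha_Y$ and $d(O,Z)\geq\alpha_Z$ read $\alpha_Y\leq O_2\leq x-\alpha_Z$, so a valid height $O_2$ exists precisely when $\alpha_Y+\alpha_Z=\Theta_1\leq x$. Because $k\geq 0$, the quantity $d(O,X)+d(O,X')=y-kO_2$ is largest for the smallest admissible $O_2$, so I would take $O_2=\alpha_Y$. Then, since varying $O_1$ trades $d(O,X)$ against $d(O,X')$ while keeping their sum fixed at $y-k\alpha_Y$, one can meet both $d(O,X)\geq\alpha_X$ and $d(O,X')\geq\alpha_{X'}$ as soon as $\alpha_X+\alpha_{X'}\leq y-k\alpha_Y$, i.e. $\alpha_X+\alpha_{X'}+k\alpha_Y=\Theta_2\leq y$. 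Thus the two hypotheses $\Theta_1\leq x$ and $\Theta_2\leq y$ produce the required point $O$, and interiority with respect to the new edges (a boundary issue only when both inequalities are simultaneously tight) can be arranged by an arbitrarily small perturbation, using the slack coming from $a_i<l(Q_i)$. The only genuinely delicate ingredient is the containment $\{d(\cdot,E)\geq b_E,\ \forall E\}\subseteq P_1$ established in the second step.
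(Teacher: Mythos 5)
Your coordinate computation for the four main edges is correct and matches what the paper's proof asserts in its first sentence: with $Y,Z$ at heights $0$ and $x$ one has $d(O,Y)=O_2$, $d(O,Z)=x-O_2$ and $d(O,X)+d(O,X')=y-kO_2$ with $k\geq 0$, so $\Theta_1\leq x$ and $\Theta_2\leq y$ produce a point $O$ with $d(O,E)\geq\alpha_E:=\max\{a_{E_1},b_E\}$ for all four edges. The gap is in what you verify for the edges created by corner chopping. By the paper's definition of a triangle packing problem, \emph{every} edge of $P_1$ carries a weight $a_i$ with $0<a_i<l(Q_i)$, including the new edges $N$ produced by the chops (and in the application to Theorem \ref{thm:main} these weights are the sizes of non-toric blowups sitting on those edges, which are macroscopic positive numbers). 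Your argument concludes only that $d(O,N)\geq 0$, i.e.\ that ``$O$ survives every chop and lies in $P_1$''; it never establishes the required inequality $d(O,N)\geq a_N$. Moreover your proposed charging bound $\mu_N\leq a_N b_E+c_N b_{E'}$ is calibrated to give exactly zero slack, since it yields $d(O,N)\geq a_N(\alpha_E-b_E)+c_N(\alpha_{E'}-b_{E'})\geq 0$ and nothing more; so even if you carried out the induction you flag as the ``technical heart,'' it could not deliver the weight constraints on the chopped edges, and a deficit of size $a_N$ cannot be absorbed by the ``arbitrarily small perturbation'' you invoke at the end, which only addresses boundary tangency.

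The paper closes exactly this point with a one-line monotonicity observation that also makes your unproven bookkeeping unnecessary: if a point has affine distance at least $d$ to the two edges adjacent to a corner and that corner is truncated with size at most $d$, then the distance to the new edge is $d(O,E)+d(O,E')-b\geq 2d-d=d$, so the floor $d$ is \emph{preserved}, not merely nonnegativity. Taking $d$ to be the inflated floor coming from the $\alpha$'s (which dominates the relevant chop sizes $b_*$ by the maxima built into $\Theta_1,\Theta_2$), this bound persists inductively through arbitrary iterated chopping. Since each new edge has affine length at most the size of the chop that created it, its weight satisfies $a_N<l(N)\leq b_*\leq d\leq d(O,N)$, so all constraints on the chopped edges hold with room to spare. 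Your single-chop computation $d(O,N)\geq\alpha_E+\alpha_{E'}-b$ already contains this stronger conclusion; the fix is to propagate that floor through the iteration instead of the zero-slack charging inequality, which simultaneously eliminates the need for the normals $a_Nn_E+c_Nn_{E'}$ and the induction over the chopping sequence.
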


\begin{proof}
By the discussions above, the first condition $\Theta_1\leq x$ implies that there exists a horizontal line between $Y_1$ and $Z_1$ whose affine distance to $Y_1$ (resp. $Z_1$) is $\max\{a_{Y_1},b_{Y}\}$ (resp. at least $\max\{a_{Z_1},b_{Z}\}$). The second condition $\Theta_2\leq y$ then ensures the existence of a point $O$ on the intersection of this horizontal line with $P_1$ such that $O$ serves as a solution point to the four edges $X_1,Y_1,X_1',Z_1$. To verify $O$ is also the solution point for the other edges arising from corner chopping, we make use of the following observation. Assume a point in the Delzant polygon has affine distance at least $d$ to two adjacent edges. If we truncate the corner of size at most $d$ between those two edges, then the affine distance from the point to the new edge must be at least $d$. Since each weight $a_i$ is strictly less than the affine length of its corresponding edge, and in our definitions of $\Theta_1,\Theta_2$ we have taken the maximum with those sizes $b_*$'s of the chopped triangles, the point $O$ is indeed the solution point for the entire triangle packing problem by induction on the number of corner choppings.
\end{proof}

Unfortunately, the existence of such a point does not immediately give the disjoint regular Symington triangles we expect since they might overlap at the point $O$. See Figure \ref{fig:perturbation} for an example about such an issue. We need the following triangle perturbation lemma to solve this issue.

 \begin{figure}[h]
		\includegraphics*[width=\linewidth]{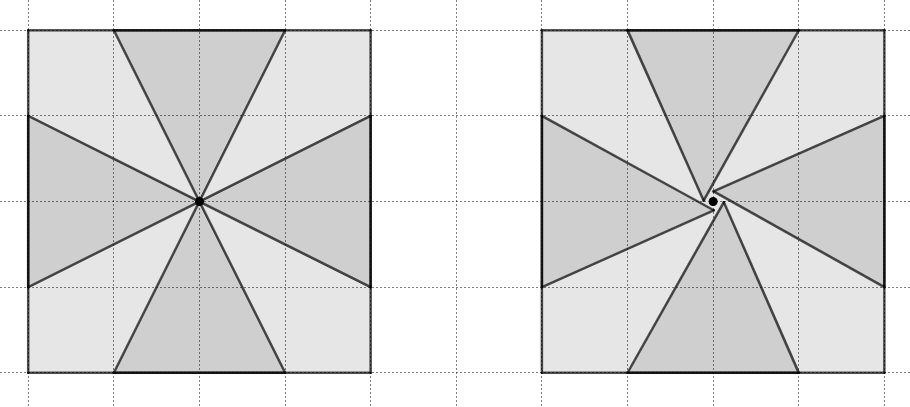}
 
		\caption{Suppose the Delzant polygon in the triangle packing problem is a square whose edges have affine length $4$ and the weights $a_1,a_2,a_3,a_4$ are $2,2,2,2$. There is a unique point solving this problem. However, it does not immediately yield the desired almost toric blowup diagram. Some perturbations are necessary to achieve the correct diagram.
\label{fig:perturbation}}
	\end{figure}

\begin{lemma}[{\bf Triangle perturbation}]\label{lemma:embed}
Suppose there is a solvable triangle packing problem with weight $a_i$ on the edge $Q_i$. Assume for each $i$, there is a finite sequence $\{b_{ij}\}_j$ of positive numbers such that $b_{ij}\leq a_i$ and $\sum_jb_{ij}<l(Q_i)$. Then, one can always find a disjoint embedding of regular Symington triangles of sizes $b_{ij}$ on the edge $Q_i$.
\end{lemma}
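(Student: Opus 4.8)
The plan is to start from a solution point $O$ provided by solvability, to lay down the bases of the surgery triangles along each edge using only the aggregate width bound $\sum_j b_{ij}<l(Q_i)$, and then to fix the apexes by a perturbation that keeps all triangles pairwise disjoint. Concretely, I first fix an interior point $O$ with $d(O,Q_i)\ge a_i$ for every $i$. For each edge $Q_i$ separately, the strict inequality $\sum_j b_{ij}<l(Q_i)$ lets me place the prescribed bases as pairwise disjoint subsegments $B_{ij}\subset Q_i$ of affine length $b_{ij}$, with positive gaps between consecutive bases and with all of them kept away from the two endpoints of $Q_i$.

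The containment of each triangle in $P$ is the easy part and uses only convexity. For a base $B_{ij}$ and \emph{any} apex $A_{ij}$ chosen in the interior of $P$ at affine distance exactly $b_{ij}$ from the line through $Q_i$, the triangle $\mathrm{conv}(B_{ij}\cup\{A_{ij}\})$ lies in $P$, since $P$ is convex and $B_{ij}\subset\partial P$. Such apexes exist and there is room to move them: because $b_{ij}\le a_i\le d(O,Q_i)$, the segment from any point of $B_{ij}$ to $O$ meets the parallel line at affine height $b_{ij}$ in the interior of $P$, and the admissible locus of apexes at that height is a nonempty open interval. This openness is exactly the freedom I will exploit.

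The crux is disjointness, and Figure \ref{fig:perturbation} pinpoints the difficulty: the naive choice that drags every apex to the solution point makes the triangles meet at $O$, and in the extremal (unique solution point) case each triangle is forced through $O$. I would resolve this by a staggering/``pinwheel'' perturbation. Distinct edges are separated by keeping each triangle inside a thin neighborhood of the wedge it spans toward the interior; triangles on the same edge already have disjoint bases with positive gaps, so I can tilt their apexes within the open admissible intervals so that, whenever there is height slack $d(O,Q_i)>b_{ij}$, they fall into disjoint vertical strips over their bases, and in the tight borderline case rotate the apexes to distinct positions around $O$ so that no two triangles share the interior point at which they would otherwise collide. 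The slack encoded in $\sum_j b_{ij}<l(Q_i)$ and $b_{ij}\le a_i$ is precisely what guarantees the finitely many constraints can all be met; formally I would introduce the triangles one at a time and choose each apex generically in its open interval to avoid the finitely many previously placed triangles.

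The main obstacle is the reconciliation of containment and disjointness: containment pushes the apexes toward the interior of $P$, while disjointness forbids them from clustering there, and in symmetric configurations these pulls are in direct tension. Verifying that the perturbation can always be carried out uniformly, and not just in the symmetric examples, is the technical heart of the argument; the role of the strict inequalities is to supply the open room that makes the generic choice of apexes available at every step.
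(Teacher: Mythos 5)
Your setup is sound as far as it goes: placing disjoint bases using $\sum_j b_{ij}<l(Q_i)$, containment of each triangle in $P$ by convexity, and the correct identification of the difficulty at the solution point $O$ (the situation of Figure \ref{fig:perturbation}). But the step that is supposed to resolve that difficulty --- introducing the triangles one at a time and choosing each apex ``generically in its open interval to avoid the finitely many previously placed triangles'' --- is not a valid mechanism. Meeting a previously placed closed two-dimensional triangle is an \emph{open} condition on your one-parameter apex choice, not a codimension-positive one: either the set of good apex positions is a nonempty open set or it is empty, and genericity contributes nothing toward showing it is nonempty. Exactly in the tight case your pinwheel is meant to handle (the square with $d(O,Q_i)=a_i$ for all $i$ and $O$ unique), every triangle is forced up to the affine height of $O$, and whether the triangle on $Q_i$ can avoid its two neighbors depends quantitatively on how much tangential room those neighbors left; a greedy sequential placement freezes the early triangles with no mechanism guaranteeing that the last one, constrained by both of its already-placed neighbors, can close the cycle. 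You concede this yourself (``verifying that the perturbation can always be carried out uniformly \dots is the technical heart''), but that verification \emph{is} the lemma, so the proposal as written restates the problem rather than solving it.

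The paper closes precisely this gap with a simultaneous, quantitative perturbation rather than a generic or greedy one: all triangles are coned off near $O$, and the triangle on $Q_i$ is translated by $\varepsilon_i\overrightarrow{p_i}$, where $\overrightarrow{p_i}$ is the primitive counterclockwise direction of $Q_i$. At the corner $R$ between $Q_{i-1}$ and $Q_i$, a slope comparison shows the two shifted triangles are disjoint once $\varepsilon_{i-1}/\varepsilon_i\leq s_{OR}=a_i/a_{i-1}$, and the single coordinated choice $\varepsilon_i:=\frac{a_1}{a_i}\varepsilon$ with $\varepsilon\ll\min_i l(Q_i)$ satisfies all of these corner inequalities at once; the ratios telescope consistently around the whole polygon, which is exactly the global cyclic compatibility your one-at-a-time scheme cannot see. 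If you wish to keep a sequential formulation you would have to reserve explicit tangential room for the not-yet-placed triangles, i.e.\ rediscover these ratio bounds a priori; the clean repair is to make the shifts simultaneous with the ratio condition above, as the paper does.
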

\begin{proof}
    We prove only the case when each $\{b_{ij}\}_j$ is simply $\{a_i\}$. The general case follows analogously, requiring only more elaborate notation. Assume each edge has primitive direction $\overrightarrow{p_i}$ (in a counterclockwise way) and the point $O$ is the solution point to the triangle packing problem. We wish to shift each regular Symington triangle lying on $Q_i$ along the vector $\varepsilon_i\overrightarrow{p_i}$ for some small $\varepsilon_i$. Denote by $O_i:=O+\varepsilon_i\overrightarrow{p_i}$ the vertex of the shifted Symington triangle. By applying a suitable $\text{AGL}(2;\ZZ)$-transformation, we can focus on the corner $R$ between the edges $Q_{i-1}$ and $Q_i$, as illustrated in Figure \ref{fig:triangleshift}. Note that we have slopes $s_{OB}>s_{OR}>s_{OA}$. To ensure that the shifted regular Symington triangles do not overlap, it suffices to check that $O_{i-1}$ does not lie below the segment $O_iB'$, which will be guaranteed by $\frac{\varepsilon_{i-1}}{\varepsilon_{i}}<s_{OB}$. Hence, it also suffices to have $\frac{\varepsilon_{i-1}}{\varepsilon_{i}}\leq s_{OR}=\frac{a_i}{a_{i-1}}$. Now, by choosing $\varepsilon\ll l(Q_i)$ for all $i$'s, and setting $\varepsilon_i:=\frac{a_1}{a_i}\varepsilon$, this condition will be satisfied.
\end{proof}

\begin{figure}[h]
		\includegraphics*[width=\linewidth]{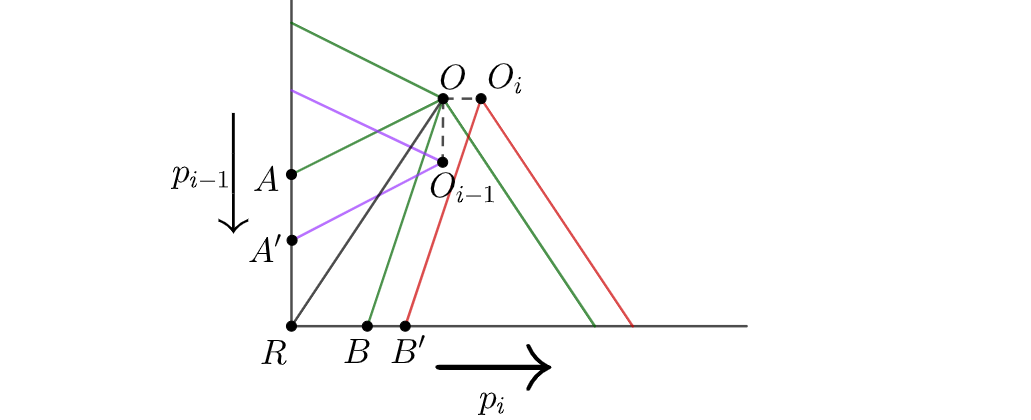}
 \caption{The process of repositioning regular Symington triangles so as to eliminate the overlap of nodal singularities.}\label{fig:triangleshift}
	\end{figure}

\begin{rmk}
It is observed in \cite[Lemma 5.6]{Engel2021} that one can simultaneously perform nodal slides to eliminate the overlap. In our proof of Lemma \ref{lemma:packing}, we take a slightly different approach, allowing the bottom edge of the Symington triangles to be shifted. This approach makes the argument more straightforward to present as above.
\end{rmk}

\subsection{Proof of the main Theorem}
We now aim to canonically assign an almost toric realization $\mathbf{R}_{\mathfrak{f}}(\XX)\in\mathbb{BD}$ for any $\XX=[X,\omega,D]\in\mathbb{LCY}_{\geq 2}$. Suppose $X=\CC\PP^2\#n\overline{\CC\PP}^2$ with $n\geq 2$ and the framing $\mathfrak{f}$ on $\mathbb{LCY}$ gives a basis $\{H,E_1,\cdots,E_n\}\subseteq H_2(X;\ZZ)$ with $\omega(H)=c,\omega(E_i)=\dd_i$. Without loss of generality, let us also assume the normalized condition $c=1$. Then, the reduced condition yields the following key inequalities which we will frequently use: 
\[\dd_1\geq\dd_2\geq\cdots\geq\dd_n, \,\,\, 1\geq\dd_1+\dd_2+\dd_3.\]
By the discussions in Section \ref{section:lcy}, let us choose the canonical $\varepsilon$-replacement $\XX_{\varepsilon}$ and the symplectic toric model $\XX_{\text{tor}}$ along with the morphism $\Gamma_n\in\text{Hom}(\XX_{\text{tor}},\XX_{\varepsilon})$ consisting of only non-toric blowups. There is also the morphism $\Gamma_t\in\text{Hom}(\XX_{\text{tor}}',\XX_{\text{tor}})$ consisting of only toric blowups, where $\XX_{\text{tor}}'$ is given by the boundary divisor of a symplectic toric Hirzebruch surface with moment polygon $P_0$

\begin{proof}[Proof of Theorem \ref{thm:main}]

The blowup patterns $\Gamma_n$ and $\Gamma_t$ naturally give us a triangle packing problem in the following manner. Starting from the trapezoid $P_0$, one can derive a corner chopping pattern from the toric blowups in $\Gamma_t$. This process produces the Delzant polygon $P_1$ after the corner chopping. The weights of triangle packing problem assigned to the edges of $P_1$ are determined by the non-toric blowups in $\Gamma_n$; specifically, each weight is the maximum size among all non-toric blowups occurring on the corresponding edge.\footnote{For greater rigor, we require all weights to be positive; otherwise, the solution point may lie on the boundary of the Delzant polygon. Some edges may not admit non-toric blowups, in which case we assign a very small positive weight $a\ll \dd_i$ for all $i$'s. The subsequent inequality estimates remain valid under this convention. } Note that, although the non-toric blowup pattern $\Gamma_n$ is not canonical, any alternative choice differs only by the order of blowups and therefore yields the same triangle packing problem.

Let us show it is solvable by checking the conditions of Lemma \ref{lemma:packing} (details are fully spelled out for the first case). Let $\dd_i,\dd_j$ be two distinguished blowup sizes introduced in Section \ref{subsection:replace}. Recall that they are either among $\dd_1,\cdots,\dd_n$ when $\XX_{\varepsilon}=\XX$; or significantly smaller than $\dd_n$ when $\XX_{\varepsilon}\neq \XX$. Also, let $f_1$ denote the first blowup in the canonically assigned morphism $\Gamma\in\text{Hom}(\XX_{\text{red}},\XX)$.
\begin{enumerate}[label=(\roman*)]
	\item If $f_1$ is the non-toric blowup on $2H$, then $\XX_{\text{tor}}'$ is given by the trapezoid with $(k,x,y,z)=(1,1-\dd_2,2-\dd_2-\dd_i-\dd_j,1-\dd_i-\dd_j)$. By the assumption on $f_1$, we have $b_X,b_{X'},b_{Y},b_Z\leq \dd_3$ (actually must be $\leq\dd_5$ since the exceptional class $E_2$ is non-toric, $E_3,E_4$ are either non-toric or distinguished). According to our choice of $\Gamma_n$, we also have $a_{X_1}\leq 1-\dd_2-\dd_i,a_{X_1'}\leq 1-\dd_2-\dd_j,a_{Y_1}\leq \dd_3$ and $a_{Z_1}=\dd_1$. Therefore 
 $$\Theta_1\leq \dd_1+\dd_3\leq 1-\dd_2=x,$$
 $$\Theta_2\leq (1-\dd_2-\dd_i)+(1-\dd_2-\dd_j)+\dd_3=y+(\dd_3-\dd_2)\leq y;$$

 Otherwise, $\XX_{\text{tor}}'$ is given by the trapezoid with $(k,x,y,z)=(2,1-\dd_i,2-\dd_i-\dd_j,\dd_i-\dd_j)$. Note that we still have $b_X,b_{X'},b_{Y},b_Z\leq \dd_3$ (actually must be $\leq\dd_4$ since the exceptional classes $E_2,E_3$ are either non-toric or distinguished). Also $a_{X_1}\leq \dd_1,a_{X_1'}\leq 1-\dd_i-\dd_j,a_{Z_1}\leq \dd_2$ and $a_{Y_1}\leq\dd_3$ by the assumption on $f_1$. Therefore
 $$\Theta_1\leq \dd_2+\dd_3\leq 1-\dd_1\leq 1-\dd_i =x,$$
 $$\Theta_2\leq \dd_1+(1-\dd_i-\dd_j)+2\dd_3\leq 2-\dd_1-\dd_2=y.$$

 \item If $f_1$ is the non-toric blowup on $(a+1)H-aE_1$,
\[\Theta_1\leq \dd_3+(1-\dd_1-\dd_2)\leq 1-\dd_1=x,\]
 $$\Theta_2\leq (1-\dd_1-\dd_i)+(1-\dd_1-\dd_j)+(2a-2)\dd_3=2-\dd_i-\dd_j+(a-1)(\dd_1+2\dd_3)-a\dd_1-\dd_1 $$
     $$\leq 2-\dd_i-\dd_j+a-1-a\dd_1-\dd_2=y;$$

 otherwise, $a_{Y_1},b_{Y_1}\leq \dd_3$ holds and we have 
 $$\Theta_1\leq \dd_2+\dd_3\leq 1-\dd_1=x,$$
 $$\Theta_2\leq (1-\dd_1-\dd_i)+(1-\dd_1-\dd_j)+(2a-1)\dd_3=(2-\dd_i-\dd_j)+(a-2)(\dd_1+2\dd_3)+3\dd_3-a\dd_1$$
 $$\leq (2-\dd_i-\dd_j)+a-2+1-a\dd_1=y.$$

 %\item (a) $\Theta_1\leq \dd_2+\dd_3\leq 1-\dd_1=x,$
 
 %$\Theta_2\leq (1-\dd_1-\alpha)+(1-\dd_1-\beta)+(2a-1)\dd_3=(2-\alpha-\beta)+(a-2)(\dd_1+2\dd_3)+3\dd_3-a\dd_1\leq (2-\alpha-\beta)+a-2+1-a\dd_1=y$;

 %(b) $\Theta_1\leq \dd_3+(1-\dd_1-\dd_2)\leq 1-\dd_1=x,$
 
 %$\Theta_2\leq (1-\dd_1-\alpha)+(1-\dd_1-\beta)+(2a-2)\dd_3=(2-\alpha-\beta)+(\dd_1+\dd_2+\dd_3)+(a-3)(\dd_1+2\dd_3)+3\dd_3-a\dd_1-\dd_2\leq (2-\alpha-\beta)+1+(a-3)+1-a\dd_1-\dd_2=y$;

 \item If $f_1$ is the non-toric blowup on $aH-(a-1)E_1$,
\[\Theta_1\leq \dd_3+(1-\dd_1-\dd_2)\leq 1-\dd_1=x,\]
 $$\Theta_2\leq (1-\dd_1-\dd_i)+\dd_3+(2a-3)\dd_3=1-\dd_i+(a-1)(\dd_1+2\dd_3)-(a-1)\dd_1-\dd_1$$
     $$\leq 1-\dd_i+a-1-(a-1)\dd_1-\dd_2=y;$$

if $f_1$ is the toric blowup on the node between $aH-(a-1)E_1$ and $H-E_1$, then $a_{Y_1},b_{Y_1}\leq \dd_3$, $a_{X_1},b_{X_1}\leq 1-\dd_1-\dd_2$, $a_{X_1'},b_{X_1'}\leq 1-\dd_1-\dd_i$ hold and we have
\[\Theta_1\leq \dd_3+(1-\dd_1-\dd_2)\leq 1-\dd_1=x,\]
 $$\Theta_2\leq (1-\dd_1-\dd_i)+(1-\dd_1-\dd_2)+(2a-3)\dd_3=2-\dd_2-\dd_i+(a-2)(\dd_1+2\dd_3)-(a-1)\dd_1+(\dd_3-\dd_1)$$
     $$\leq 2-\dd_2-\dd_i+a-2-(a-1)\dd_1=y;$$
     
 otherwise, $a_{Y_1},b_{Y_1}\leq \dd_3$ still holds and we have 
 $$\Theta_1\leq \dd_2+\dd_3\leq 1-\dd_1=x,$$
 $$\Theta_2\leq (1-\dd_1-\dd_i)+\dd_2+(2a-2)\dd_3=1-\dd_i+(a-1)(\dd_1+2\dd_3)-(a-1)\dd_1+(\dd_2-\dd_1)$$
 $$\leq 1-\dd_i+a-1-(a-1)\dd_1=y.$$
 
 %\item (a) $\Theta_1\leq \dd_2+\dd_3\leq 1-\dd_1=x,$
 
 %$\Theta_2\leq \dd_2+(1-\dd_1-\alpha)+(2a-2)\dd_3=(1-\alpha)+(\dd_1+\dd_2+\dd_3)+(a-3)(\dd_1+2\dd_3)+3\dd_3-(a-1)\dd_1\leq (1-\alpha)+1+(a-3)+1-(a-1)\dd_1=y$;
 
% (b) $\Theta_1\leq \dd_3+(1-\dd_1-\dd_2)\leq 1-\dd_1=x,$
 
 %$\Theta_2\leq (1-\dd_1-\dd_2)+(1-\dd_1-\alpha)+(2a-3)\dd_3=(2-\dd_2-\alpha)+(a-3)(\dd_1+2\dd_3)+3\dd_3-(a-1)\dd_1\leq (2-\dd_2-\alpha)+(a-3)+1-(a-1)\dd_1=y$;

\item If $f_1$ is the non-toric blowup on $aH-(a-1)E_1$,
\[\Theta_1\leq \dd_3+(1-\dd_1-\dd_2)\leq 1-\dd_1=x,\]
 $$\Theta_2\leq \dd_3+\dd_4+(2a-2)\dd_3=a(\dd_1+2\dd_3)-(a-1)\dd_1-\dd_2+(\dd_2-\dd_1)+(\dd_4-\dd_3)$$
     $$\leq a-(a-1)\dd_1-\dd_2=y;$$

if $f_1$ is the toric blowup on the node between $aH-(a-1)E_1$ and $H-E_1$, then $a_{Y_1},b_{Y_1}\leq \dd_3$, $a_{X_1},b_{X_1}\leq 1-\dd_1-\dd_2$, $a_{X_1'},b_{X_1'}\leq \dd_3$ hold and we have
\[\Theta_1\leq \dd_3+(1-\dd_1-\dd_2)\leq 1-\dd_1=x,\]
 $$\Theta_2\leq (1-\dd_1-\dd_2)+\dd_3+(2a-2)\dd_3=1-\dd_2+(a-1)(\dd_1+2\dd_3)-(a-1)\dd_1+(\dd_3-\dd_1)$$
     $$\leq 1-\dd_2+a-1-(a-1)\dd_1=y;$$

     otherwise, $a_{Y_1},b_{Y_1}\leq \dd_3$ still holds and we have
     $$\Theta_1\leq \dd_2+\dd_3\leq 1-\dd_1=x,$$
 $$\Theta_2\leq \dd_2+\dd_3+(2a-1)\dd_3=a(\dd_1+2\dd_3)-(a-1)\dd_1+(\dd_2-\dd_1)$$
 $$\leq a-(a-1)\dd_1=y.$$
 
% \item (a) $\Theta_1\leq \dd_2+\dd_3\leq 1-\dd_1=x,$
 
% $\Theta_2\leq \dd_2+\dd_3+(2a-1)\dd_3\leq (\dd_2+2\dd_3)+(a-1)(\dd_1+2\dd_3)-(a-1)\dd_1\leq a-(a-1)\dd_1=y$; 

 %(b) $\Theta_1\leq \dd_2+\dd_3\leq 1-\dd_1=x,$
 
 %$\Theta_2\leq (1-\dd_1-\dd_2)+\dd_3+(2a-2)\dd_2=1+(\dd_1+\dd_2+\dd_3)+(a-3)(\dd_1+2\dd_2)+3\dd_2-(a-1)\dd_1-\dd_2$; 
 
 %Note that in general the reduced condition doesn't give us $\dd_1+2\dd_2\leq 1$ or $3\dd_2\leq 1$. However in this particular situation, the existence of $(-a+1)H+aE_1$ guarantees $\dd_1>\frac{a-1}{a}$ in which case $\dd_1+2\dd_2<\frac{a-1}{a}+2\frac{1}{a}=\frac{a+1}{a}$ and $3\dd_2<\frac{3}{a}$. So the above can still be bounded by $1+1+\frac{(a-3)(a+1)}{a}+\frac{3}{a}-(a-1)\dd_1-\dd_2=a-(a-1)\dd_1-\dd_2=y$;

 \item $$\Theta_1\leq \dd_3+(1-\dd_2-\dd_l)\leq 1-\dd_l=x,$$
 $$\Theta_2\leq (1-\dd_l-\dd_i)+(1-\dd_l-\dd_j)+\dd_3=y-(1-\dd_1-\dd_2-\dd_3)\leq y.$$
 
  % \item $\Theta_1\leq \dd_3+\dd_1\leq 1-\dd_2=x,$
 
% $\Theta_2\leq (1-\dd_2-\alpha)+(1-\dd_2-\beta)+\dd_3=y+(\dd_3-\dd_2)\leq y$;
    
    %\item $\Theta_1\leq \dd_3+(1-\dd_1-\dd_2)\leq 1-\dd_1=x,$
 
 %$\Theta_2\leq (1-\dd_1-\alpha)+(1-\dd_1-\beta)+(2a-2)\dd_3=(2-\alpha-\beta)+(\dd_1+\dd_2+\dd_3)+(a-3)(\dd_1+2\dd_3)+3\dd_3-a\dd_1-\dd_2\leq (2-\alpha-\beta)+1+(a-3)+1-a\dd_1-\dd_2=y$;
 
%    \item $\Theta_1\leq \dd_3+(1-\dd_1-\dd_2)\leq 1-\dd_1=x,$
 
 %$\Theta_2\leq \dd_3+(1-\dd_1-\alpha)+(2a-3)\dd_3 =y-((a-1)-(a-3)(\dd_1+2\dd_3)-(\dd_1+\dd_2+\dd_3)-3\dd_3)\leq y$.
\end{enumerate}

%{\bf Step 2 (reduce to triangle packing problems):}

 Therefore, Lemma \ref{lemma:packing} guarantees a disjoint embedding of all the regular Symington triangles and we obtain a well-defined element $\mathbb{P}$ in $\mathbb{BD}$. When $\XX_{\varepsilon}$ is equal to $\XX$, $\mathbf{R}_{\mathfrak{f}}(\XX)$ is then defined to be this $\mathbb{P}$. Otherwise, according to our construction of $\XX_{\varepsilon}$, there will be one or two (depending on the number of small blowups performed to get $\XX_{\varepsilon}$) distinguished edges of the bitten Delzant polygon $\mathbb{P}$ whose affine lengths are significantly smaller than those of the other edges. As explained in Section \ref{sec:bitten}, there will be a special bitten Delzant polygon $\mathbb{P}'\in\mathbb{BD}$ whose $\varepsilon$-shift (or $(\varepsilon,\frac{\varepsilon}{2})$-shift) is $\mathbb{P}$, in which case $\mathbf{R}_{\mathfrak{f}}(\XX)$ is defined to be this $\mathbb{P}'$. 
 
 Finally, to verify that $\mathbf{B}(\mathbf{R}_{\mathfrak{f}}(\XX))=\XX$, first notice that $\mathbf{B}(\mathbb{P})$ always yields $\XX_{\varepsilon}$ by our construction of $\XX_{\text{tor}}$ in Section \ref{section:toricmodel} and the map $\mathbf{B}$ in Proposition \ref{prop:bdtolcy}. Thus, if $\XX=\XX_{\varepsilon}$ the claim follows immediately. Otherwise, as explained in Section \ref{sec:bitten}, since the inverse of $\varepsilon$-shift (or $(\varepsilon,\frac{\varepsilon}{2})$-shift) corresponds to the $\varepsilon$-replacement $\Gamma_\varepsilon:\XX\rightarrow\cdots\rightarrow \XX_{\varepsilon}$, $\mathbf{B}(\mathbb{P}')$ will be exactly $\XX$. The proof of Theorem \ref{thm:main} is now completed.
 
 \end{proof}
 
 \begin{corollary}\label{maincor}
    Every symplectic rational manifold $(X,\omega)$ with $[\omega]\cdot c_1(X,\omega)>0$ admits an almost toric fibration.
\end{corollary}

\begin{proof}
We may assume $b_2(X)\geq 3$ since the case when $b_2(X)\leq 2$ is well-known. Choose a framing $\{H,E_1,\cdots,E_{l-1}\}\subseteq H_2(X;\ZZ)$ such that the condition $[\omega]\cdot c_1(X,\omega)>0$ becomes \[\omega(E_1)+\cdots+\omega(E_{l-1})<3\omega(H).\] If we take the symplectic blowup of $(X,\omega)$ with sufficiently small size, there will be $(X\#\overline{\CC\PP}^2,\omega')$ with \[\omega'(E_1)+\cdots+\omega'(E_{l-1})+2\omega'(E_{l})<3\omega'(H).\]  One can then assume that there exists a LCY divisor in $(X\#\overline{\CC\PP}^2,\omega')$ with homology sequence $(3H-E_1-\cdots-E_{l-1}-2E_{l},E_{l})$ by \cite[Proposition 2.24]{Enumerate}. This is a symplectic reduced model of type (\rn{5}). Theorem \ref{thm:main} then gives an almost toric realization of such a LCY pair presented by a bitten Delzant polygon. By applying several branch moves shown in Figure \ref{fig:nodal}, we can arrange the codimension $1$-stratum representing the component $E_{l}$ to form a segment of self-intersection $-1$, after which we fill in the corner. The resulting diagram will correspond to the toric blowdown of the exceptional class $E_l$. This gives an almost toric base diagram for $(X,\omega)$ since it is well known that the space of symplectic spheres in class $E_{l}$ is connected (\cite[Theorem B]{Wendl}). 
\end{proof}

\begin{rmk}
   The almost toric base diagram obtained in Corollary \ref{maincor} actually corresponds to a `uninodal Looijenga pair' $(X,\omega,D)$ in the sense that $D$ is an immersed symplectic sphere with a single positive self-intersection point representing $\text{PD}(c_1(X,\omega))$. 
\end{rmk}

\begin{figure}[h]
		\includegraphics*[width=\linewidth]{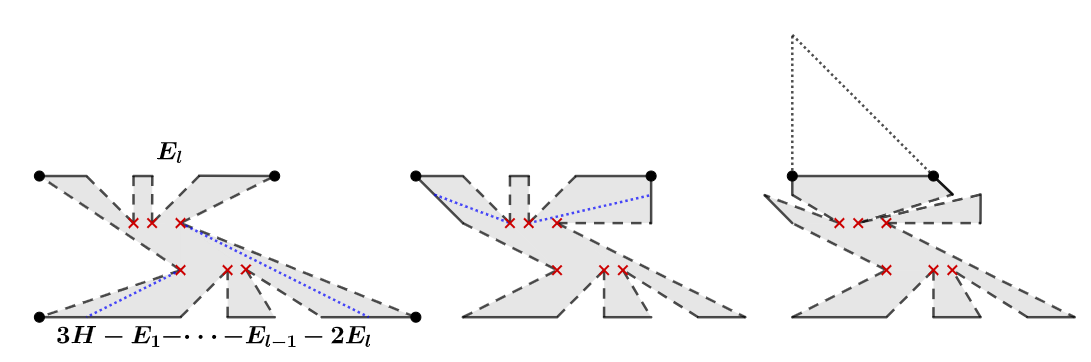}
 \caption{The process of blowing down the $E_l$-component for a type (\rn{5}) symplectic reduced model. The leftmost diagram shows the bitten Delzant polygon, where the bites on the top (resp. bottom) edges denote classes $H_{1l}, H_{2l}$ (resp. $E_3,\cdots,E_{l-1}$). We refer the reader to Figure \ref{fig:toric5} for comparison. After several branch moves, we arrive at the rightmost diagram which contains a segment of self-intersection $-1$. By filling it with a triangle, we then obtain an almost toric base for $(X,\omega)$. For the beauty of these diagrams, the length of $E_l$
  is exaggerated, although in our setting $E_l$
  is assumed to have sufficiently small symplectic area.}\label{fig:nodal}
	\end{figure}

\subsection{Further discussions}\label{section:EF21}

\begin{itemize}

\item {\bf (Dependence of the framing)} Let us revisit Example \ref{exa:framing} to see how the choice of framing affects the resulting associated bitten Delzant polygon. Given the framing $\mathfrak{f}_1$ with the reduced model $\XX_{\text{red}}^{(1)}$, the $\varepsilon$-replacement $\XX^{(1)}_{\varepsilon}$ is obtained by performing a small toric blowup at $C_H\cap C_{H_3}$ and $\XX^{(1)}_{\text{tor}}=\XX^{'(1)}_{\text{tor}}$ has a rectangle as its moment polygon. To obtain $\mathbf{R}_{\mathfrak{f}_1}(\XX)$, we need to pack one special and two regular Symington triangles. On the other hand, given the framing $\mathfrak{f}_2$ with the reduced model $\XX_{\text{red}}^{(2)}$, the $\varepsilon$-replacement $\XX^{(2)}_{\varepsilon}$ is obtained by performing a small toric blowup at $C_{2h-e_1-e_2-e_3}\cap C_{h_3}$ and $\XX^{(2)}_{\text{tor}}=\XX^{'(2)}_{\text{tor}}$ has a $1$-trapezoid as its moment polygon. To obtain $\mathbf{R}_{\mathfrak{f}_2}(\XX)$, we again need to pack one special and two regular Symington triangles. See Figure \ref{fig:dependframing} for their presentations, where we can see that $\mathbf{R}_{\mathfrak{f}_1}(\XX),\mathbf{R}_{\mathfrak{f}_2}(\XX)$ indeed represent different elements in $\mathbb{BD}$. However, these two bitten Delzant polygons correspond to essentially the same almost toric fibrations, related by a branch move as shown in Figure \ref{fig:dependframing}. This motivates the following speculation: if we further consider the map $\mathbf{I}:\mathbb{BD}\rightarrow \{(B,\mathcal{A},\mathcal{S})\}/\sim$ which assigns to each bitten Delzant polygon the isomorphism class of stratified nodal integral affine structures on the disk induced by its combinatorial data, then $\mathbf{I}\circ \mathbf{R}_{\mathfrak{f}}$ does not depend on the choice of a framing $\mathfrak{f}$.

\begin{figure}[h]
		\includegraphics*[width=\linewidth]{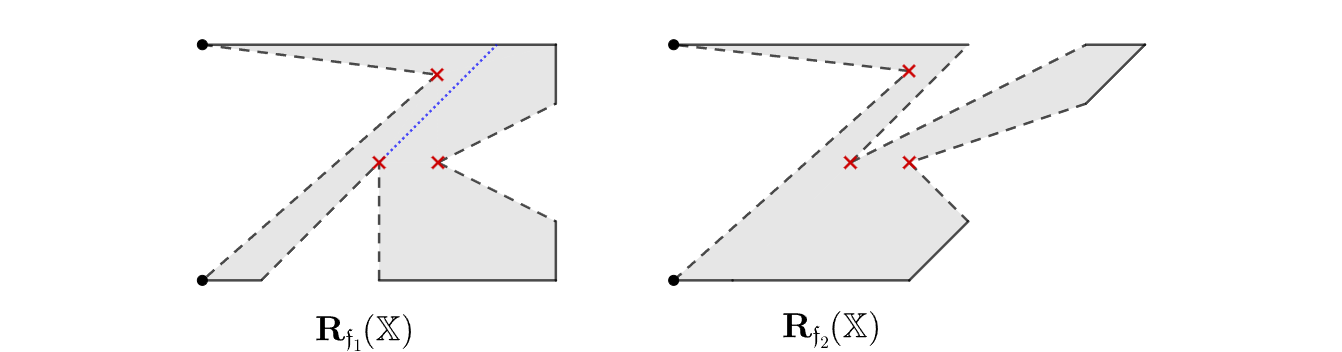}
 \caption{Two almost toric presentations of $\XX$, associated to the framings $\mathfrak{f}_1$ and $\mathfrak{f}_2$, are related by a branch move.}\label{fig:dependframing}
	\end{figure}
    
    \item {\bf (Comparison with \cite{Engel2021})} Engel and Friedman (\cite[Section 5]{Engel2021}) constructs ATF in settings of algebraic toric geometry, with the purpose towards realizing a big and nef but non-ample divisor $\lambda\in\mathcal{B}_{\text{gen}}\cap \Lambda$ as the monodromy invariant of a Type \RN{3} degeneration of holomorphic anticanonical pairs. Such a $\lambda$ corresponds to a form which is symplectic on the divisor complement but degenerate along the divisor itself. In particular, this implies that all edges in the base diagram are fully bitten, so that all components of the divisor are mapped to a single point. In other words, this only describes an ATF on the divisor complement. This suffices for their purpose, since an all-edges-fully-bitten diagram corresponds to the dual
complex of the central fiber in the Type \RN{3} degeneration. Note that although \cite[Theorem 5.4]{Engel2021} is stated for ample divisors, the effect of full bites on the edge with some $b_{ij}=0$ in their notation, is not addressed. There are additional subtleties such as the passage from $\QQ$-divisor to $\RR$-divisor, the situation in which the packing point $O$ lies on the boundary of lattice polygon arising from their use of the curve cone theorem and the canonicity of the choice of ATF presentations. We recommend \cite{MN24} for further discussions regarding these subtleties in the framework of almost complex geometry. 

\item {\bf (A possibly more straightforward proof of Corollary \ref{maincor}?)} One might attempt to prove Corollary \ref{maincor} directly, starting from the ATF on $\mathbb{CP}^2$ whose diagram is obtained by performing three nodal trades on the standard toric moment triangle. Suppose the perimeter of the triangle is $3$. Given positive numbers $\delta_1, \delta_2, \dots, \delta_n$ with $\sum_i \delta_i < 3$ (so that $c_1 \cdot [\omega] > 0$), one can consider the problem of packing possibly folded triangles of sizes $\delta_i$ into the diagram. More generally, by employing Markov $(a,b,c)$-triangles arising from mutations studied in \cite{Vianna1}, this approach works whenever $\delta_i \leq \frac{1}{3}$ for all $i$, since the longest edge has length $\frac{a}{bc}$ and the height is $\frac{bc}{a}$ (see \cite{brendlschlenk} and their interpretation of this quantity as the relative Gromov width). One can then take a sequence of Markov triples $(a_n, b_n, c_n)$ such that $\lim_{n \to \infty} \frac{a_n}{b_n c_n} = 3$. Nevertheless, extending this method to all general cases still and writing a clean argument remains challenging. See Figure \ref{fig:game}.

 \begin{figure*}[h]
		\includegraphics*[width=\linewidth]{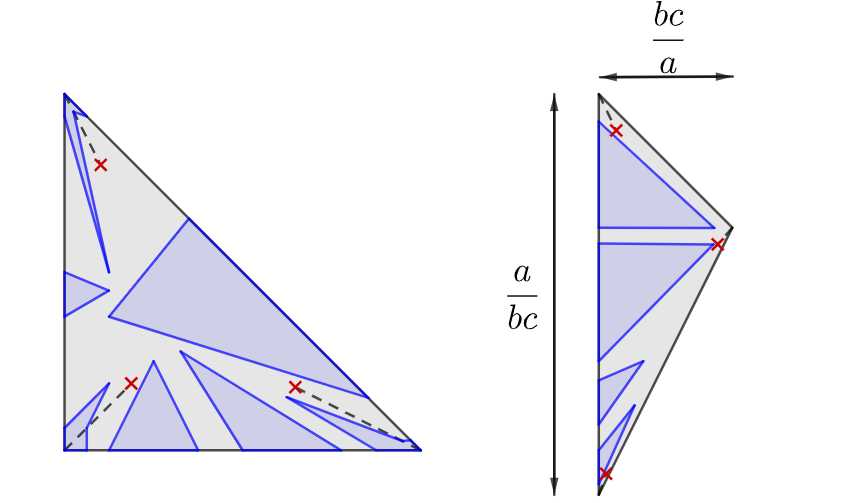}
  \caption{Pack possibly folded Symington triangles in Markov's $(a,b,c)$-triangles. }\label{fig:game}
	\end{figure*}

\item {\bf (Almost toric blowup)}  \cite[Section 5.4]{symington} explains the \textit{topological} effect of a regular Symington triangle as attaching a $(-1)$-framed $2$-handle to $S^1\times D^3$, which is equivalent to taking a connect sum with $\overline{\CC\PP}^2$. In this paper, we provide a way to interpret the \textit{symplectic} effect of a regular Symington triangle as performing a non-toric blowup on the LCY side by the well-defined quiver morphism  $\mathbf{B}:\mathbb{BD}\rightarrow\mathbb{LCY}$ (Proposition \ref{prop:bdtolcy}), which is a consequence of the symplectic Torelli Theorem \ref{prop:Torelli} and the tautness of toric pairs (Lemma \ref{lem:torictaut}). Unlike the nodal trade operation, where the uniqueness filling result of $(S^3,\xi_{\text{std}})$ and the triviality of its contact mapping class group allow for a straightforward cut-and-paste description, the ATF blowup operation seems to require a deeper understanding to maintain a surgical viewpoint. One needs to verify that the gluing contactomorphism of the boundary $(S^1\times S^2,\xi_{\text{std}})$ is trivial in the contact mapping class group, which is known to be $\ZZ\oplus\ZZ_2$  (\cite{contactmappingclassgroup})\footnote{This was communicated to us by Jonny Evans.}.

\end{itemize}

\bibliographystyle{amsalpha}
	\bibliography{main}{}

\providecommand{\bysame}{\leavevmode\hbox to3em{\hrulefill}\thinspace}
\providecommand{\MR}{\relax\ifhmode\unskip\space\fi MR }
% \MRhref is called by the amsart/book/proc definition of \MR.
\providecommand{\MRhref}[2]{%
  \href{http://www.ams.org/mathscinet-getitem?mr=#1}{#2}
}
\providecommand{\href}[2]{#2}
\begin{thebibliography}{CGHMP25}

\bibitem[Bea96]{complexagsurface}
Arnaud Beauville, \emph{Complex algebraic surfaces}, second ed., London Mathematical Society Student Texts, vol.~34, Cambridge University Press, Cambridge, 1996, Translated from the 1978 French original by R. Barlow, with assistance from N. I. Shepherd-Barron and M. Reid. \MR{1406314}

\bibitem[Bir97]{biranpacking}
P.~Biran, \emph{Symplectic packing in dimension {$4$}}, Geom. Funct. Anal. \textbf{7} (1997), no.~3, 420--437. \MR{1466333}

\bibitem[BS24]{brendlschlenk}
Jo\'e{} Brendel and Felix Schlenk, \emph{Pinwheels as {L}agrangian barriers}, Commun. Contemp. Math. \textbf{26} (2024), no.~5, Paper No. 2350020, 21. \MR{4731311}

\bibitem[CGHMP25]{CGHMP-toric-staircase}
Dan Cristofaro-Gardiner, Tara~S. Holm, Alessia Mandini, and Ana~Rita Pires, \emph{On infinite staircases in toric symplectic four-manifolds}, J. Differential Geom. \textbf{129} (2025), no.~2, 335--413. \MR{4859114}

\bibitem[CV22]{CasalVianna}
Roger Casals and Renato Vianna, \emph{Full ellipsoid embeddings and toric mutations}, Selecta Math. (N.S.) \textbf{28} (2022), no.~3, Paper No. 61, 62. \MR{4414137}

\bibitem[Del88]{Delzant}
Thomas Delzant, \emph{Hamiltoniens p\'{e}riodiques et images convexes de l'application moment}, Bull. Soc. Math. France \textbf{116} (1988), no.~3, 315--339. \MR{984900}

\bibitem[DL10]{relativecone}
Josef~G. Dorfmeister and Tian-Jun Li, \emph{The relative symplectic cone and {$T^2$}-fibrations}, J. Symplectic Geom. \textbf{8} (2010), no.~1, 1--35. \MR{2609626}

\bibitem[DL22]{DorfLicirclesum}
\bysame, \emph{The minimal genus problem---a quarter century of progress}, Acta Math. Sci. Ser. B (Engl. Ed.) \textbf{42} (2022), no.~6, 2257--2278. \MR{4493676}

\bibitem[EF21]{Engel2021}
Philip Engel and Robert Friedman, \emph{Smoothings and rational double point adjacencies for cusp singularities}, J. Differential Geom. \textbf{118} (2021), no.~1, 23--100. \MR{4255071}

\bibitem[Eva23]{evans}
Jonny Evans, \emph{Lectures on {L}agrangian torus fibrations}, London Mathematical Society Student Texts, vol. 105, Cambridge University Press, Cambridge, 2023. \MR{4605050}

\bibitem[Fri15]{Friedman}
Robert Friedman, \emph{On the geometry of anticanonical pairs}, Preprint arXiv:1502.02560 (2015).

\bibitem[Gei92]{Geiges}
Hansj\"org Geiges, \emph{Symplectic structures on {$T^2$}-bundles over {$T^2$}}, Duke Math. J. \textbf{67} (1992), no.~3, 539--555. \MR{1181312}

\bibitem[GHK15]{GHK}
Mark Gross, Paul Hacking, and Sean Keel, \emph{Mirror symmetry for log {C}alabi-{Y}au surfaces {I}}, Publ. Math. Inst. Hautes \'{E}tudes Sci. \textbf{122} (2015), 65--168. \MR{3415066}

\bibitem[Kar99]{KarshonS1}
Yael Karshon, \emph{Periodic {H}amiltonian flows on four-dimensional manifolds}, Mem. Amer. Math. Soc. \textbf{141} (1999), no.~672, viii+71. \MR{1612833}

\bibitem[KK07]{KK07}
Yael Karshon and Liat Kessler, \emph{Circle and torus actions on equal symplectic blow-ups of {$\Bbb C{\rm P}^2$}}, Math. Res. Lett. \textbf{14} (2007), no.~5, 807--823. \MR{2350126}

\bibitem[KK17]{KK17}
Yael Karshon and Liat Kessler, \emph{Distinguishing symplectic blowups of the complex projective plane}, Journal of Symplectic Geometry \textbf{15} (2017), no.~4, 1089--1128.

\bibitem[Li08]{Li08}
Tian-Jun Li, \emph{The space of symplectic structures on closed 4-manifolds}, Third {I}nternational {C}ongress of {C}hinese {M}athematicians. {P}art 1, 2, AMS/IP Stud. Adv. Math., vol. 42, pt. 1, 2, Amer. Math. Soc., Providence, RI, 2008, pp.~259--277. \MR{2409637}

\bibitem[Liu96]{Liu}
Ai-Ko Liu, \emph{Some new applications of general wall crossing formula, {G}ompf's conjecture and its applications}, Math. Res. Lett. \textbf{3} (1996), no.~5, 569--585. \MR{1418572}

\bibitem[LL07]{LiLicirclesum}
Bang-He Li and Tian-Jun Li, \emph{Circle-sum and minimal genus surfaces in ruled 4-manifolds}, Proc. Amer. Math. Soc. \textbf{135} (2007), no.~11, 3745--3753. \MR{2336591}

\bibitem[LLS21]{Linyushen}
Kuan-Wen Lai, Yu-Shen Lin, and Luca Schaffler, \emph{Decomposition of {L}agrangian classes on {K}3 surfaces}, Math. Res. Lett. \textbf{28} (2021), no.~6, 1739--1763. \MR{4477673}

\bibitem[LM16]{limak}
Tian-Jun Li and Cheuk~Yu Mak, \emph{Symplectic log {C}alabi-{Y}au surface---deformation class}, Adv. Theor. Math. Phys. \textbf{20} (2016), no.~2, 351--379. \MR{3541847}

\bibitem[LMN22]{Enumerate}
Tian-Jun Li, Jie Min, and Shengzhen Ning, \emph{Enumerative aspect of symplectic log {C}alabi-{Y}au divisors and almost toric fibrations}, Preprint arXiv:2203.08544. To appear in Israel Journal of Mathematics (2022).

\bibitem[Loo81]{Looijenga}
Eduard Looijenga, \emph{Rational surfaces with an anticanonical cycle}, Ann. of Math. (2) \textbf{114} (1981), no.~2, 267--322. \MR{632841}

\bibitem[LOV21]{LeeOhVianna}
Weonmo Lee, Yong-Geun Oh, and Renato Vianna, \emph{Asymptotic behavior of exotic {L}agrangian tori {$T_{a,b,c}$} in {$\Bbb C{\rm P}^2$} as {$a + b + c\to \infty$}}, J. Symplectic Geom. \textbf{19} (2021), no.~3, 607--634. \MR{4325414}

\bibitem[LS10]{LS}
Naichung~Conan Leung and Margaret Symington, \emph{Almost toric symplectic four-manifolds}, J. Symplectic Geom. \textbf{8} (2010), no.~2, 143--187. \MR{2670163}

\bibitem[Mag24]{Magill}
Nicki Magill, \emph{Unobstructed embeddings in {H}irzebruch surfaces}, J. Symplectic Geom. \textbf{22} (2024), no.~1, 109--152. \MR{4819498}

\bibitem[Min24]{contactmappingclassgroup}
Hyunki Min, \emph{The contact mapping class group and rational unknots in lens spaces}, Int. Math. Res. Not. IMRN (2024), no.~15, 11315--11342. \MR{4782801}

\bibitem[ML98]{cat}
Saunders Mac~Lane, \emph{Categories for the working mathematician}, second ed., Graduate Texts in Mathematics, vol.~5, Springer-Verlag, New York, 1998. \MR{1712872}

\bibitem[MN24]{MN24}
Jie Min and Shengzhen Ning, \emph{Almost complex geometry of symplectic log calabi-yau pairs with applications to almost toric fibrations}, available at https://sites.google.com/view/shengzhenning (2024).

\bibitem[MO15]{MO}
Dusa McDuff and Emmanuel Opshtein, \emph{Nongeneric {$J$}-holomorphic curves and singular inflation}, Algebr. Geom. Topol. \textbf{15} (2015), no.~1, 231--286. \MR{3325737}

\bibitem[OO96]{OhtaOno}
Hiroshi Ohta and Kaoru Ono, \emph{Notes on symplectic {$4$}-manifolds with {$b^+_2=1$}. {II}}, Internat. J. Math. \textbf{7} (1996), no.~6, 755--770. \MR{1417784}

\bibitem[PVuN09]{PV2}
Alvaro Pelayo and San V\~{u}~Ng\d{o}c, \emph{Semitoric integrable systems on symplectic 4-manifolds}, Invent. Math. \textbf{177} (2009), no.~3, 571--597. \MR{2534101}

\bibitem[PVuN11]{PV1}
\'{A}lvaro Pelayo and San V\~{u}~Ng\d{o}c, \emph{Constructing integrable systems of semitoric type}, Acta Math. \textbf{206} (2011), no.~1, 93--125. \MR{2784664}

\bibitem[Sal13]{Salamonsurvey}
Dietmar Salamon, \emph{Uniqueness of symplectic structures}, Acta Math. Vietnam. \textbf{38} (2013), no.~1, 123--144. \MR{3098204}

\bibitem[Sch24]{LPR1}
Joel Schmitz, \emph{A counterexample to lagrangian poincar\'e recurrence in dimension four}, arXiv:2410.24102 (2024).

\bibitem[Sch25]{LPR2}
\bysame, \emph{More counterexamples to lagrangian poincar\'e recurrence in dimension four}, arXiv:2507.13924 (2025).

\bibitem[ST05]{SiebertTian}
Bernd Siebert and Gang Tian, \emph{On the holomorphicity of genus two {L}efschetz fibrations}, Ann. of Math. (2) \textbf{161} (2005), no.~2, 959--1020. \MR{2153404}

\bibitem[STV24]{flux}
Egor Shelukhin, Dmitry Tonkonog, and Renato Vianna, \emph{Geometry of symplectic flux and {L}agrangian torus fibrations}, J. Topol. \textbf{17} (2024), no.~4, Paper No. e70002, 56. \MR{4822930}

\bibitem[Sym03]{symington}
Margaret Symington, \emph{Four dimensions from two in symplectic topology}, Topology and geometry of manifolds ({A}thens, {GA}, 2001), Proc. Sympos. Pure Math., vol.~71, Amer. Math. Soc., Providence, RI, 2003, pp.~153--208. \MR{2024634}

\bibitem[Ush09]{Ushersum}
Michael Usher, \emph{Kodaira dimension and symplectic sums}, Comment. Math. Helv. \textbf{84} (2009), no.~1, 57--85. \MR{2466075}

\bibitem[Via16]{Vianna1}
Renato Ferreira de~Velloso Vianna, \emph{Infinitely many exotic monotone {L}agrangian tori in {$\Bbb{CP}^2$}}, J. Topol. \textbf{9} (2016), no.~2, 535--551. \MR{3509972}

\bibitem[Via17]{viannadelpezzo}
Renato Vianna, \emph{Infinitely many monotone {L}agrangian tori in del {P}ezzo surfaces}, Selecta Math. (N.S.) \textbf{23} (2017), no.~3, 1955--1996. \MR{3663599}

\bibitem[Wen18]{Wendl}
Chris Wendl, \emph{Holomorphic curves in low dimensions}, Lecture Notes in Mathematics, vol. 2216, Springer, Cham, 2018, From symplectic ruled surfaces to planar contact manifolds. \MR{3821526}

\end{thebibliography}
\end{document}